\newtheorem{thm}{Theorem}[section]
\newtheorem{lem}[thm]{Lemma}
\newtheorem{prop}[thm]{Proposition}
\newtheorem{cor}[thm]{Corollary}
\newtheorem*{tha}{Theorem A}
\newtheorem*{thb}{Theorem B}
\newcommand{\C}{{\mathbb C}}
\newcommand{\D}{{\mathbb D}}
\newcommand{\R}{{\mathbb R}}
\newcommand{\T}{{\mathbb T}}
\newcommand{\N}{{\mathbb N}}
\newcommand{\bmo}{{\rm BMO}}
\newcommand{\bmoa}{{\rm BMOA}}
\newcommand{\Al}{A^\alpha}
\renewcommand{\sb}{\subset}
\newcommand{\eps}{\varepsilon}
\newcommand{\f}{\frac}
\newcommand{\al}{\alpha}
\newcommand{\be}{\beta}
\newcommand{\Ga}{\Gamma}
\newcommand{\ga}{\gamma}
\newcommand{\de}{\delta}
\newcommand{\la}{\lambda}
\newcommand{\ze}{\zeta}
\renewcommand{\th}{\theta}
\newcommand{\si}{\sigma}
\newcommand{\Om}{\Omega}
\newcommand{\Omte}{\Om(\th,\eps)}
\numberwithin{equation}{section}
\title[Wronskians and deep zeros of holomorphic functions]
{Wronskians and deep zeros\\ 
of holomorphic functions}
\author{Konstantin M. Dyakonov}
\address{ICREA and Universitat de Barcelona, Departament de Matem\`atica 
Aplicada i An\`alisi, Gran Via 585, E-08007 Barcelona, Spain}
\email{konstantin.dyakonov@icrea.cat}
\keywords{Wronskian, zeros of analytic functions, inner factors} 
\subjclass[2000]{30D50, 30D55, 46J15.} 
\thanks{Supported in part by grant MTM2011-27932-C02-01 from El Ministerio de Ciencia 
e Innovaci\'on (Spain) and grant 2009-SGR-1303 from AGAUR (Generalitat de Catalunya).}
\begin{document}
\begin{abstract}
Given linearly independent holomorphic functions $f_0,\dots,f_n$ on a planar domain $\Om$, 
let $\mathcal E$ be the set of those points $z\in\Om$ where a nontrivial linear combination 
$\sum_{j=0}^n\la_jf_j$ may have a zero of multiplicity greater than $n$, once the 
coefficients $\la_j=\la_j(z)$ are chosen appropriately. An elementary argument 
involving the Wronskian $W$ of the $f_j$'s shows that $\mathcal E$ is a discrete subset 
of $\Om$ (and is actually the zero set of $W$); thus \lq\lq deep" zeros are rare. We 
elaborate on this by studying similar phenomena in various function spaces on the unit disk, 
with more sophisticated boundary smallness conditions playing the role of deep zeros. 

\bigskip

\smallskip

{\bf R\'esum\'e.} Etant donn\'ees des fonctions holomorphes $f_0,\dots,f_n$ 
lin\'eairement ind\'ependantes sur un domaine $\Om$ du plan, soit $\mathcal E$ l'ensemble 
des points $z\in\Om$ o\`u une combinaison lin\'eaire non triviale $\sum_{j=0}^n\la_jf_j$ 
peut avoir un z\'ero d'ordre sup\'erieur \`a $n$. Un argument \'el\'ementaire utilisant 
le wronskien des $f_j$ montre que $\mathcal E$ est un sous-ensemble discret de $\Om$; 
ainsi, les z\'eros \lq\lq profonds" sont rares. Nous \'etudions des ph\'enom\`enes 
similaires dans divers espaces de fonctions sur le disque unit\'e, avec des conditions 
plus sophistiqu\'ees de d\'ecroissance au bord \`a la place de z\'eros profonds 
int\'erieurs. 
\end{abstract}

\maketitle

\section{Introduction} 

Let $\Om$ be a domain in the complex plane $\C$, and let $\mathcal H(\Om)$ denote the set 
of all holomorphic functions on $\Om$. The classical uniqueness theorem tells us that, 
given a non-null function $f\in\mathcal H(\Om)$, the zero set $\mathcal Z(f):=\{z\in\Om:f(z)=0\}$ 
is discrete in $\Om$ (i.\,e., has no accumulation points therein). Less known is the fact that 
this admits a natural extension to linear combinations of several functions, provided that 
the zeros are required to have suitably high multiplicities. Before stating the result, we need 
to introduce a bit of terminology. Namely, given a nonnegative integer $n$, a function 
$f\in\mathcal H(\Om)$ and a point $z_0\in\mathcal Z(f)$, we say that the zero $z_0$ is {\it $n$-deep} 
for $f$ if its multiplicity is at least $n+1$. 

\begin{tha} Suppose $f_0,\dots,f_n$ are linearly independent holomorphic functions on a domain 
$\Om\subset\C$. Then there is a discrete subset $\mathcal E$ of $\Om$ with the following property: 
whenever $\la_0,\dots,\la_n$ are complex numbers with $\sum_{j=0}^n|\la_j|>0$, the $n$-deep zeros of 
the function $\sum_{j=0}^n\la_jf_j$ are all contained in $\mathcal E$. 
\end{tha}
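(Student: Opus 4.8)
The plan is to take $\mathcal E$ to be the zero set of the Wronskian
\[
W \;=\; W(f_0,\dots,f_n) \;=\; \det\left(f_j^{(k)}\right)_{0\le j,k\le n},
\]
where $f_j^{(k)}$ denotes the $k$-th derivative of $f_j$; thus the rows of the underlying matrix are indexed by the order of differentiation and the columns by the index $j$. Two things must then be checked: that this set is discrete in $\Om$, and that it contains every $n$-deep zero of every nontrivial combination $\sum_j\la_jf_j$.

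Discreteness rests on the classical fact that holomorphic functions $f_0,\dots,f_n$ on a domain are linearly independent if and only if their Wronskian is not the zero function. Granting this, $W$ is a non-null element of $\mathcal H(\Om)$, so by the uniqueness theorem the set $\mathcal E:=\mathcal Z(W)$ has no accumulation point in $\Om$. The inclusion of the deep zeros into $\mathcal E$ is then elementary linear algebra: if $f:=\sum_{j=0}^n\la_jf_j$ with $\sum_j|\la_j|>0$ and $z_0\in\Om$ is an $n$-deep zero of $f$, then $f^{(k)}(z_0)=0$ for $k=0,1,\dots,n$, i.e. $\sum_{j=0}^n\la_jf_j^{(k)}(z_0)=0$ for those $k$; this says that the columns of the matrix $\left(f_j^{(k)}(z_0)\right)_{0\le j,k\le n}$ satisfy a nontrivial linear relation, so the matrix is singular, $W(z_0)=0$, and $z_0\in\mathcal E$.

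What really has to be proved is the equivalence invoked above. One direction is immediate: a nontrivial linear relation among the $f_j$'s is, upon repeated differentiation, inherited by the columns of the Wronskian matrix, so dependence forces $W\equiv0$. For the converse I would induct on $n$, the case $n=0$ being trivial. In the inductive step, if the smaller Wronskian $W(f_0,\dots,f_{n-1})$ vanishes identically, the inductive hypothesis already makes $f_0,\dots,f_{n-1}$ — hence all the $f_j$'s — linearly dependent. Otherwise $W(f_0,\dots,f_{n-1})$ is nonzero on some nonempty open disk $U\subset\Om$ (its zero set being discrete), where $f_0,\dots,f_{n-1}$ are independent and the relation
\[
L[y]\;:=\;W(f_0,\dots,f_{n-1},y)\;=\;0,
\]
obtained by expanding the $(n+1)\times(n+1)$ determinant along its last column, is a genuine linear ordinary differential equation of order $n$ with holomorphic coefficients and nonvanishing leading coefficient $W(f_0,\dots,f_{n-1})$.

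Its solution space on $U$ is $n$-dimensional, contains the independent functions $f_0,\dots,f_{n-1}$, and therefore equals their linear span; since $W(f_0,\dots,f_n)\equiv0$ gives $L[f_n]=0$ on $U$, it follows that $f_n=\sum_{j=0}^{n-1}c_jf_j$ on $U$ for suitable constants $c_j$, and by the uniqueness theorem this identity persists throughout $\Om$, contradicting the linear independence of $f_0,\dots,f_n$. I expect this ODE-plus-analytic-continuation step to be the one genuine obstacle; everything else — the reduction to the behaviour of $W$ and the linear algebra at the deep zero — is routine bookkeeping.
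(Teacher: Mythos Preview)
Your proof is correct and follows essentially the same route as the paper: both take $\mathcal E=\mathcal Z(W)$, reduce the $n$-deep zero condition to singularity of the Wronskian matrix, and invoke the classical equivalence between linear independence and $W\not\equiv0$ to conclude that $\mathcal E$ is discrete. The only difference is that the paper simply cites this equivalence (referring to \cite{La}), whereas you supply the standard inductive ODE proof of it.
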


Thus, $n$-deep zeros are forbidden for a nontrivial linear combination $\sum_{j=0}^n\la_jf_j$ 
except on a \lq\lq thin" set, which depends only on the $f_j$'s but not on the $\la_j$'s. Of course, 
this is no longer true with \lq\lq$(n-1)$-deep" in place of \lq\lq$n$-deep"; to see why, consider 
the polynomials $(z-a)^n$ with $a\in\Om$. 

\par We strongly believe that Theorem A should be known. However, having found no reference for it 
in the literature, we give a simple proof instead. 

\medskip\noindent{\it Proof of Theorem A.} For a point $z\in\Om$ to be an $n$-deep zero of 
$g:=\sum_{j=0}^n\la_jf_j$, it is necessary and sufficient that 
$$g(z)=g'(z)=\dots=g^{(n)}(z)=0.$$ 
We now rewrite this as 
\begin{equation}\label{eqn:sys}
\sum_{j=0}^n\la_jf_j^{(k)}(z)=0\qquad(k=0,\dots,n)
\end{equation}
and view \eqref{eqn:sys} as a system of homogeneous linear equations with \lq\lq unknowns" $\la_j$. 
A nontrivial solution $(\la_0,\dots,\la_n)$ to \eqref{eqn:sys} will therefore exist if and only if the 
coefficient matrix 
$$\left\{f_j^{(k)}(z):\,j,k=0,\dots,n\right\}$$ 
is singular. In other words, the {\it Wronskian} $W=W(f_0,\dots,f_n)$ defined by 
\begin{equation}\label{eqn:wro}
W(f_0,\dots,f_n):=
\begin{vmatrix}
f_0&f_1&\dots&f_n\\
f'_0&f'_1&\dots&f'_n\\
\dots&\dots&\dots&\dots\\
f_0^{(n)}&f_1^{(n)}&\dots&f_n^{(n)}
\end{vmatrix}
\end{equation}
must vanish at $z$. (We mention in passing that, according to some authors, the credit 
for introducing determinants of the form \eqref{eqn:wro} should definitely be shared 
by Wronski with Froufrou.) The $f_j$'s being linearly independent, 
it follows that $W\not\equiv0$; see \cite[Chapter 1]{La}. 
Of course, it is also true that $W\in\mathcal H(\Om)$, so 
the zero set $\mathcal Z(W)=:\mathcal E$ is a discrete subset of $\Om$. On the other hand, 
we have just seen that $\mathcal E$ consists of precisely those points in $\Om$ which 
can be realized as $n$-deep zeros for nontrivial linear combinations of the $f_j$'s. 
\quad\qed

\medskip 
The proof tells us that the $n$-deep zeros of {\it all} the linear combinations as above 
coincide with the zeros of a {\it single} holomorphic function, namely, of $W$. In some 
special cases, one is able to compute $W$ explicitly and then to determine the exceptional set 
$\mathcal E=\mathcal Z(W)$. We take the liberty of including one such result, which concerns 
the zeros of a \lq\lq fewnomial" (i.e., a possibly lacunary polynomial) of the form 
\begin{equation}\label{eqn:few}
P(z)=\sum_{j=0}^na_jz^{d_j}, 
\end{equation}
as well as those of an exponential sum 
\begin{equation}\label{eqn:expsum}
Q(z)=\sum_{j=0}^na_je^{\mu_jz}.
\end{equation}

\begin{cor}\label{cor:fewnomial} Let $a_0,\dots,a_n$ be complex numbers with $\sum_{j=0}^n|a_j|>0$. 
\par{\rm (a)} Given nonnegative integers $d_0<d_1<\dots<d_n$, the polynomial $P$ defined by \eqref{eqn:few} 
has no $n$-deep zeros in $\C\setminus\{0\}$. 
\par{\rm (b)} Given pairwise distinct complex numbers $\mu_0,\dots,\mu_n$, the function $Q$ defined 
by \eqref{eqn:expsum} has no $n$-deep zeros in $\C$. 
\end{cor}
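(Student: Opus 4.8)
The plan is to apply Theorem A (equivalently, the Wronskian criterion from its proof) with $f_j(z)=z^{d_j}$ in part (a) and $f_j(z)=e^{\mu_j z}$ in part (b), and to exhibit the exceptional set $\mathcal E=\mathcal Z(W)$ explicitly by computing the relevant Wronskian. In each case the $f_j$'s are linearly independent on any domain (monomials with distinct exponents, respectively exponentials with distinct frequencies), so $W\not\equiv0$ and the $n$-deep zeros of every nontrivial combination lie in $\mathcal Z(W)$; the task reduces to showing $\mathcal Z(W)\subset\{0\}$ in case (a) and $\mathcal Z(W)=\emptyset$ in case (b).

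For part (b) I would first treat the exponential case, since it is cleaner. Differentiating $e^{\mu_j z}$ $k$ times gives $\mu_j^k e^{\mu_j z}$, so the $(k,j)$ entry of the Wronskian matrix is $\mu_j^k e^{\mu_j z}$. Factoring $e^{\mu_j z}$ out of the $j$-th column, one gets
\begin{equation}\label{eqn:wroexp}
W(e^{\mu_0 z},\dots,e^{\mu_n z})=\left(\prod_{j=0}^n e^{\mu_j z}\right)\cdot\det\left\{\mu_j^k:\,j,k=0,\dots,n\right\},
\end{equation}
and the remaining determinant is a Vandermonde determinant in the $\mu_j$'s, equal to $\prod_{0\le i<j\le n}(\mu_j-\mu_i)$. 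Since the $\mu_j$ are pairwise distinct this is a nonzero constant, and $\prod_j e^{\mu_j z}$ never vanishes, so $W$ has no zeros at all; hence $\mathcal E=\emptyset$ and no nontrivial $Q$ has an $n$-deep zero anywhere in $\C$.

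For part (a) the computation is analogous but slightly more delicate. The $(k,j)$ entry of the Wronskian matrix is $\frac{d^k}{dz^k}z^{d_j}=d_j(d_j-1)\cdots(d_j-k+1)\,z^{d_j-k}$, i.e.\ the falling factorial $(d_j)_k$ times $z^{d_j-k}$ (interpreting $z^{d_j-k}$ appropriately when the exponent would be negative — but the point is that $z\ne0$). Pulling out $z^{d_j}$ from the $j$-th column and a power $z^{-k}$ from the $k$-th row gives
\begin{equation}\label{eqn:wromon}
W(z^{d_0},\dots,z^{d_n})=z^{\,\sum_j d_j-\sum_{k=0}^n k}\cdot\det\left\{(d_j)_k:\,j,k=0,\dots,n\right\},
\end{equation}
and the matrix $\{(d_j)_k\}$ of falling factorials is, up to invertible row operations (each falling factorial $(d_j)_k$ is a monic degree-$k$ polynomial in $d_j$), a Vandermonde matrix in $d_0,\dots,d_n$, so its determinant equals $\prod_{0\le i<j\le n}(d_j-d_i)\ne0$ because the $d_j$ are distinct. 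Thus $W(z)=c\,z^{N}$ with $c\ne0$ and $N=\sum_j d_j-\binom{n+1}{2}\ge0$, so $\mathcal Z(W)\subset\{0\}$ and $P$ has no $n$-deep zeros in $\C\setminus\{0\}$.

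The only real obstacle is the bookkeeping in \eqref{eqn:wromon}: one must check that the exponent $N$ is a nonnegative integer (it is, since $d_0<\dots<d_n$ forces $d_j\ge j$, hence $\sum d_j\ge\binom{n+1}{2}$) and, more importantly, justify the row-reduction step that converts the falling-factorial matrix into a genuine Vandermonde determinant — writing $(d_j)_k=d_j^k+(\text{lower-order terms in }d_j)$ and subtracting suitable multiples of earlier rows leaves the determinant unchanged while reducing the $k$-th row to $(d_j^k)$. Once this is in place, the nonvanishing of the Vandermonde determinant for distinct $d_j$ (resp.\ distinct $\mu_j$) finishes both parts, and the restriction to $\C\setminus\{0\}$ in (a) is seen to be exactly the possible zero of $W$ at the origin, which genuinely can occur (e.g.\ $P(z)=z^{d_0}(\cdots)$ when $d_0>0$).
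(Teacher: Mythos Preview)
Your proof is correct and follows essentially the same approach as the paper: both apply Theorem~A and show that the Wronskian is $cz^N$ (resp.\ a nonzero constant times $e^{\mu z}$, $\mu=\sum_j\mu_j$), hence vanishes only at $0$ (resp.\ nowhere). The only difference is cosmetic: the paper simply asserts that $W(z^{d_0},\dots,z^{d_n})$ is a monomial $cz^d$ with $c\ne0$, citing \cite{BD} for the precise formula, whereas you carry out the computation explicitly via the column/row extraction and the Vandermonde reduction of the falling-factorial matrix.
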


To prove (a), one notes that the Wronskian $W=W(z^{d_0},\dots,z^{d_n})$ is a monomial. Indeed, it equals 
$cz^d$ with suitable integers $c\ne0$ and $d\ge0$ depending on the $d_j$'s and on $n$ (a precise formula 
can be found in \cite{BD}). Thus $W$ has no zeros in $\C\setminus\{0\}$, and accordingly, $P$ has no $n$-deep 
zeros except possibly at $0$. 
\par To prove (b), one checks that the Wronskian $W(e^{\mu_0z},\dots,e^{\mu_nz})$ 
is a constant multiple of $e^{\mu z}$, where $\mu=\sum_{j=0}^n\mu_j$, the constant factor being 
nonzero. This time, we see that the Wronskian is nowhere zero, and the required fact follows. 
\par One may observe that part (a) is actually a consequence of (b), and anyhow, both statements 
are probably -- if not certainly -- known. Nevertheless, just as with Theorem A above, we have 
found it easier to give a quick proof than to search for a reference. 

\par While Theorem A is essentially \lq\lq algebraic" in nature, we are interested in extending it to a more 
\lq\lq analytic" context. In what follows, the domain $\Om$ is (almost always) taken to be the disk 
$\D:=\{z\in\C:|z|<1\}$, the functions $f_0,\dots,f_n$ are assumed to lie in a certain space $X\subset\mathcal H(\D)$, 
and the smallness condition imposed on the linear combination $\sum_{j=0}^n\la_jf_j$ involves some sort of 
decay near (some parts of) the circle $\T:=\partial\D$ rather than having deep zeros inside. Once the class 
$X$ and the decay condition are chosen appropriately (the latter being sufficiently strong), the phenomenon 
underlying Theorem A will manifest itself in some form or other, and we find various instances of this. 

\par Typically, the union of the smallness sets that correspond to {\it all} the nontrivial 
linear combinations of $f_0,\dots,f_n(\in X)$ turns out to be \lq\lq thin", and can be realized 
as a set on which a {\it single} nontrivial function (from a certain space $\widetilde X\subset\mathcal H(\D)$ 
related to $X$) is small, possibly not in the original sense. This general principle does not seem to have 
been either noticed or reflected in the literature, so we wish to highlight it here. Furthermore, a number 
of concrete quantitative statements will be supplied to illustrate it. Wronskians and their basic properties 
will again play an appreciable role in the proofs, but more sophisticated tools from complex analysis 
will also be needed. 

\par In Section 2 below, we deal with \lq\lq large analytic functions" on $\D$ (for which 
a certain controlled growth near $\T$ is allowed) and study their nontangential decay near $\T$. 
In Sections 3 and 4, we turn to spaces of \lq\lq smooth analytic functions" (this time, 
a boundary smoothness condition is imposed) and look at the inner factors of such functions. 
Finally, in Section 5, we consider several types of holomorphic spaces $X$ and discuss the exceptional 
sets $\mathcal E$ that arise in Theorem A when the functions $f_j$ range over $X$. 

\par In conclusion, we mention that this paper shares some common features with the author's recent 
work in \cite{DCRM, DMA, DCONM}, where Wronskians were employed in connection with function-theoretic 
analogues of the so-called $abc$ conjecture. Also, a portion of our current Section 4 was previously 
announced in \cite{DCRM12}, in rather a sketchy form. 

\section{Large analytic functions that are small near the boundary} 

This section is devoted to the {\it Korenblum classes} $A^{-\be}$ with $\be>0$; 
here $A^{-\be}$ is defined as the set of all functions $f\in\mathcal H(\D)$ that satisfy 
\begin{equation}\label{eqn:koren}
\sup_{z\in\D}|f(z)|(1-|z|)^\be<\infty.
\end{equation}
A discussion of these spaces, as well as of $A^{-\infty}:=\bigcup_{\be>0}A^{-\be}$, 
can be found in \cite{K}. 
\par More generally, given a number $\al\in\R$, we denote by $\Al$ the set of 
those $f\in\mathcal H(\D)$ for which 
\begin{equation}\label{eqn:lipkor}
\sup_{z\in\D}|f^{(m)}(z)|(1-|z|)^{m-\al}<\infty,
\end{equation}
where $m$ is the least nonnegative integer in the interval $(\al,\infty)$. When $\al=-\be<0$, one 
takes $m=0$ and recovers the growth condition \eqref{eqn:koren}. When $\al>0$, \eqref{eqn:lipkor} 
becomes a smoothness condition on $\T$ that characterizes the classical {\it Lipschitz--Zygmund spaces} 
(to be dealt with in the next section). Finally, the value $\al=0$ corresponds to the {\it Bloch 
space} $A^0$, which is usually denoted by $\mathcal B$; see \cite{ACP}. 

\par Now, for a point $\ze\in\T$ and a number $M>1$, we write 
$$\Ga_M(\ze):=\{z\in\D:\,|\ze-z|\le M(1-|z|)\};$$ 
thus $\Ga_M(\ze)$ is a {\it Stolz angle} (or {\it cone}) with vertex $\ze$. 
Further, given a number $\ga>0$, we say that a function $f\in\mathcal H(\D)$ is {\it nontangentially 
small of order $\ga$ at $\ze$} if 
$$f(z)=O\left((1-|z|)^\ga\right)\quad\text{as}\quad|z|\to1,\,\,z\in \Ga_M(\ze),$$ 
for some $M>1$. Accordingly, by saying that $f$ is {\it nontangentially small of order $>\ga$ at $\ze$} 
we mean that 
$$f(z)=o\left((1-|z|)^\ga\right)\quad\text{as}\quad|z|\to1,\,\,z\in \Ga_M(\ze),$$ 
for some $M>1$. Finally, if $f_0,\dots,f_n$ are linearly independent functions in $\mathcal H(\D)$, 
then we denote by $E_\ga(f_0,\dots,f_n)$ the set of all points $\ze\in\T$ with the following 
property: there exists a nontrivial linear combination $\sum_{j=0}^n\la_jf_j$ which is nontangentially 
small of order $>\ga$ at $\ze$ (the coefficients $\la_j$ will of course depend on $\ze$). 

\begin{thm}\label{thm:kornontang} Let $f_0,\dots,f_n$ be linearly independent functions in $A^{-\be}$ 
(with $\be>0$), and put 
\begin{equation}\label{eqn:defga}
\ga=n\be+\f{n(n+1)}2.
\end{equation}
Then $E_\ga(f_0,\dots,f_n)$ is a set of Lebesgue measure $0$ on $\T$. The same is true for $\be=0$, provided 
that $A^{-\be}$ is replaced by $H^\infty$, the space of bounded analytic functions. 
\end{thm}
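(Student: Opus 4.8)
The plan is to reduce the statement to a property of the Wronskian $W=W(f_0,\dots,f_n)$, exactly as in the proof of Theorem A, and then to invoke a boundary-uniqueness (Privalov-type) theorem. First I would observe that the hypotheses on $f_0,\dots,f_n$ force $W$ to lie in a Korenblum-type class: since each $f_j$ satisfies \eqref{eqn:koren} with exponent $\be$, the $k$-th derivative $f_j^{(k)}$ satisfies a bound $O\bigl((1-|z|)^{-\be-k}\bigr)$ near $\T$ (a standard Cauchy-estimate argument on shrinking disks inside $\D$). Expanding the determinant \eqref{eqn:wro}, each of the $(n+1)!$ terms is a product $\prod_{k=0}^n f_{\sigma(k)}^{(k)}$, hence is $O\bigl((1-|z|)^{-\mathsf e}\bigr)$ with $\mathsf e=(n+1)\be+\sum_{k=0}^n k=(n+1)\be+\f{n(n+1)}2$. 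Thus $W\in A^{-\mathsf e}$, and $W\not\equiv 0$ because the $f_j$'s are linearly independent.

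Next, the key algebraic point: if $\ze\in E_\ga(f_0,\dots,f_n)$, then $W$ is \emph{itself} nontangentially small at $\ze$, of a quantitatively good order. Indeed, by definition there are coefficients $\la_0,\dots,\la_n$, not all zero, so that $g:=\sum\la_jf_j$ satisfies $g(z)=o\bigl((1-|z|)^\ga\bigr)$ on some cone $\Ga_M(\ze)$, with $\ga$ as in \eqref{eqn:defga}. Replacing the last column of $W$ by the column $(g,g',\dots,g^{(n)})^{\!T}$ changes nothing (the other columns being a nontrivial combination of the first $n$, which is absorbed), so $W$ equals the determinant with that modified last column. Now in that determinant, the first $n$ columns contribute the factor $O\bigl((1-|z|)^{-n\be-\f{n(n+1)}2}\bigr)$ — via the same Cauchy-estimate bookkeeping applied to the $n\times n$ minors — while the last column contributes $g^{(k)}(z)=o\bigl((1-|z|)^{\ga-k}\bigr)$ for $k=0,\dots,n$ by differentiating the $o$-estimate inside the slightly smaller cone. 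Multiplying, the whole determinant is $o\bigl((1-|z|)^{\ga - n\be - \f{n(n+1)}2}\bigr)=o(1)$ with our choice of $\ga$; with a little more care one gets $W(z)\to 0$ along the cone, i.e. $W$ has nontangential limit $0$ at $\ze$.

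Finally I would conclude with a boundary-uniqueness theorem for the Korenblum class: a function $W\in A^{-\mathsf e}$, $W\not\equiv 0$, can have nontangential limit $0$ only on a set of Lebesgue measure zero on $\T$. This is the analogue of the classical Privalov uniqueness theorem (valid for the Nevanlinna class, hence for all $A^{-\be}$ since $A^{-\be}\subset N$); one typically passes to $\log|W|$, which is subharmonic with an $L^1(\T)$-majorant coming from the growth bound, so $\log|W|$ has finite nontangential boundary values a.e., and the set where that value is $-\infty$ is null. The case $\be=0$ with $H^\infty$ is identical, only cleaner: then $W\in H^\infty$ outright (product of bounded functions and derivative bounds give $W\in A^{-\f{n(n+1)}2}\subset N$), and Privalov applies directly. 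The main obstacle I anticipate is the middle step — making the determinant estimate rigorous, i.e. checking that the $o$-smallness of $g$ and its derivatives genuinely survives multiplication against the (blowing-up) minors from the other columns, and that differentiating the little-$o$ estimate for $g$ on the cone $\Ga_M(\ze)$ is legitimate (it requires shrinking to $\Ga_{M'}(\ze)$ with $M'<M$ and a Cauchy-estimate on disks of radius comparable to $(1-|z|)$, which is routine but must be done carefully to keep the exponents matching \eqref{eqn:defga} exactly).
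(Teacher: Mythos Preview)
Your approach is the same as the paper's, but two of your justifications are not correct as written.

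\textbf{The cofactor estimate.} You bound every $n\times n$ minor of the Wronskian matrix uniformly by $O\bigl((1-|z|)^{-n\be-\f{n(n+1)}2}\bigr)$ and then multiply by $g^{(l)}(z)=o\bigl((1-|z|)^{\ga-l}\bigr)$. With $\ga=n\be+\f{n(n+1)}2$, this gives only $o\bigl((1-|z|)^{-l}\bigr)$ for the $l$-th term of the cofactor expansion, which blows up for $l\ge 1$. The point you are missing is that the cofactor $\Delta_l$ paired with $g^{(l)}$ is obtained by \emph{deleting row $l$}; hence its entries involve derivative orders $\{0,\dots,n\}\setminus\{l\}$, and the correct bound is $\Delta_l(z)=O\bigl((1-|z|)^{-n\be-\f{n(n+1)}2+l}\bigr)$. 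Now the $+l$ and $-l$ cancel and each term $g^{(l)}\Delta_l$ is $o(1)$. This is exactly the ``main obstacle'' you anticipated, and it is resolved by tracking which row is omitted, not by a uniform minor bound.

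\textbf{The uniqueness step.} The inclusion $A^{-\be}\subset\mathcal N$ is false: $A^{-\be}$ contains functions whose zero sets violate the Blaschke condition, whereas every nonzero function in $\mathcal N$ has a Blaschke zero set. Correspondingly, your argument via subharmonicity of $\log|W|$ and an $L^1(\T)$-majorant does not go through, since $\log^+|W|$ need not have bounded means. The result you need is the Lusin--Privalov uniqueness theorem (see Collingwood--Lohwater), which asserts that \emph{any} holomorphic function in $\D$ with nontangential limit $0$ on a set of positive measure must vanish identically; no growth hypothesis is required. With this in hand, and the corrected cofactor bookkeeping above, your argument matches the paper's proof.
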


We do not know whether the value given by \eqref{eqn:defga} is optimal. Note, however, that the result 
breaks down for all $\ga<n$. Indeed, for any fixed $\ze\in\T$, the function $z\mapsto(z-\ze)^n$ 
is a linear combination of $1,z,\dots,z^n$ and is nontangentially small of order $n$ at $\ze$. 
Thus, $E_\ga(1,z,\dots,z^n)=\T$ whenever $\ga<n$. This shows, in particular, that \eqref{eqn:defga} 
{\it is} optimal for $\be=0$ and $n=1$. 

\par Before proceeding with the proof of Theorem \ref{thm:kornontang}, we state and prove a preliminary 
result to rely upon. Below, we write $\N$ for the set of positive integers, and we use the 
notation $B(z,r)$ for the disk $\{w:|w-z|<r\}$. 

\begin{lem}\label{lem:decay} Let $\al\in\R$, $\de\in(0,1)$ and $k\in\N$. 
Further, let $G$ and $G_0$ be subsets of $\D$ such that 
\begin{equation}\label{eqn:ggg}
\bigcup_{z\in G}B(z,\de(1-|z|))\subset G_0.
\end{equation}
Then, for every function $f\in\mathcal H(\D)$ satisfying 
\begin{equation}\label{eqn:estfg}
f(z)=O\left((1-|z|)^\al\right),\qquad z\in G_0,
\end{equation}
we have 
\begin{equation}\label{eqn:estderf}
f^{(k)}(z)=O\left((1-|z|)^{\al-k}\right),\qquad z\in G.
\end{equation}
The constant in the latter $O$-condition depends on that in \eqref{eqn:estfg}, as well as 
on $\al$, $\de$ and $k$. 
\end{lem}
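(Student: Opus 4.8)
The plan is to prove Lemma \ref{lem:decay} via the Cauchy integral formula for derivatives, applied on a circle of radius comparable to $1-|z|$ around each point $z\in G$. The key observation is that \eqref{eqn:ggg} guarantees that the values of $f$ on such circles are controlled by hypothesis \eqref{eqn:estfg}. First I would fix $z\in G$ and set $r:=\de(1-|z|)$. By the hypothesis \eqref{eqn:ggg}, the closed (or, after shrinking $\de$ slightly, the open) disk $B(z,r)$ is contained in $G_0$, so \eqref{eqn:estfg} applies on the bounding circle $|w-z|=r$. The Cauchy formula gives
\begin{equation*}
f^{(k)}(z)=\f{k!}{2\pi i}\int_{|w-z|=r}\f{f(w)}{(w-z)^{k+1}}\,dw,
\end{equation*}
whence $|f^{(k)}(z)|\le k!\,r^{-k}\sup_{|w-z|=r}|f(w)|$.

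Next I would bound $1-|w|$ from below for $w$ on that circle: since $|w-z|=\de(1-|z|)$ and $\de\in(0,1)$, the triangle inequality yields $1-|w|\ge 1-|z|-|w-z|=(1-\de)(1-|z|)>0$, and similarly $1-|w|\le(1+\de)(1-|z|)$. Hence, using \eqref{eqn:estfg} with the constant $C$ from that hypothesis, for all $w$ on the circle we have $|f(w)|\le C(1-|w|)^\al\le C\max\{(1-\de)^\al,(1+\de)^\al\}(1-|z|)^\al$; the two-sided comparison handles both signs of $\al$. Substituting this and $r=\de(1-|z|)$ into the Cauchy estimate gives
\begin{equation*}
|f^{(k)}(z)|\le k!\,\de^{-k}(1-|z|)^{-k}\cdot C'(1-|z|)^\al=C''(1-|z|)^{\al-k},
\end{equation*}
which is exactly \eqref{eqn:estderf}, with a constant $C''$ depending only on $C$, $\al$, $\de$ and $k$, as claimed.

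The only mild subtlety — and it is the closest thing to an obstacle here — is the boundary case of \eqref{eqn:ggg}: as stated it gives $B(z,\de(1-|z|))\sb G_0$ with an \emph{open} disk, so the bounding circle of radius exactly $\de(1-|z|)$ need not lie in $G_0$. This is handled by applying the argument with radius $r=\de'(1-|z|)$ for any $\de'<\de$ (then the closed disk $\ov{B(z,r)}\sb B(z,\de(1-|z|))\sb G_0$), and absorbing the resulting factor $(\de')^{-k}$ into the constant; letting $\de'$ be, say, $\de/2$ suffices. Everything else is a routine application of Cauchy's inequalities together with the elementary two-sided comparison $1-|w|\asymp 1-|z|$ on the circle, so no further difficulties arise.
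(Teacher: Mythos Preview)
Your proof is correct and follows essentially the same route as the paper: Cauchy's integral formula on a circle of radius comparable to $1-|z|$, combined with the two-sided estimate $(1-\de)(1-|z|)\le 1-|w|\le(1+\de)(1-|z|)$ on that circle. The only cosmetic difference is the handling of the open-versus-closed disk issue---the paper observes that the circle lies in $\text{clos}\,G_0$ and uses continuity of $f$ to extend \eqref{eqn:estfg} there, whereas you shrink the radius to $\de'<\de$; both are entirely routine.
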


\begin{proof} Fix $z\in G$ and consider the circle 
$$\ga_z=\ga_{z,\de}:=\{\ze\in\C:|\ze-z|=\de(1-|z|)\}.$$ 
We have then 
$$f^{(k)}(z)=\f{k!}{2\pi i}\int_{\ga_z}\f{f(\ze)}{(\ze-z)^{k+1}}d\ze,$$ 
whence 
\begin{equation}\label{eqn:chainik}
|f^{(k)}(z)|\le\f{k!}{\de^k(1-|z|)^k}\cdot\sup\{|f(\ze)|:\,\ze\in\ga_z\}.
\end{equation}
It follows from \eqref{eqn:ggg} that $\ga_z\subset\text{\rm clos}\,G_0$, and so  
\eqref{eqn:estfg} yields 
$$|f(\ze)|\le C(1-|\ze|)^\al,\qquad\ze\in\ga_z,$$
with a constant $C>0$. We now combine this with the inequalities 
$$1-\de\le\f{1-|\ze|}{1-|z|}\le1+\de,\qquad\ze\in\ga_z,$$
to get 
$$\sup\{|f(\ze)|:\,\ze\in\ga_z\}\le\widetilde C(1-|z|)^\al,\qquad z\in G,$$
with a suitable $\widetilde C=\widetilde C(\al,\de,C)$. Plugging this last estimate 
into \eqref{eqn:chainik}, we arrive at \eqref{eqn:estderf}. 
\end{proof}

\par We also need the \lq\lq little oh" version of the above lemma, which can be established 
in a similar way.  

\begin{lem}\label{lem:little} Assume, under the hypotheses of Lemma \ref{lem:decay}, that 
$\T\cap\text{\rm clos}\,G\ne\emptyset$. Then, given a function $f\in\mathcal H(\D)$ satisfying 
$$f(z)=o\left((1-|z|)^\al\right)\quad\text{as}\quad|z|\to1,\,\,z\in G_0,$$
it follows that 
$$f^{(k)}(z)=o\left((1-|z|)^{\al-k}\right)\quad\text{as}\quad|z|\to1,\,\,z\in G.$$
\end{lem}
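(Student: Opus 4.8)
The plan is to rerun the Cauchy-estimate argument from the proof of Lemma~\ref{lem:decay}, this time keeping track of the smallness in a quantitative, $\eps$--$\de$ fashion. Fix $\eps>0$; the goal is to produce an $r\in(0,1)$ such that $|f^{(k)}(z)|\le\eps(1-|z|)^{\al-k}$ for every $z\in G$ with $|z|>r$, which is precisely the $o$-estimate \eqref{eqn:estderf} in little-oh form. First I would unpack the hypothesis on $G_0$: for each $\eps'>0$ there is $\rho=\rho(\eps')\in(0,1)$ so that $|f(w)|\le\eps'(1-|w|)^\al$ whenever $w\in G_0$ and $|w|>\rho$.

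Next, exactly as before, for $z\in G$ I consider the circle $\ga_z=\{\ze:|\ze-z|=\de(1-|z|)\}$; by \eqref{eqn:ggg} we have $\ga_z\subset\operatorname{clos}G_0$, and for $\ze\in\ga_z$ the two-sided comparison $1-\de\le(1-|\ze|)/(1-|z|)\le1+\de$ holds. The one new ingredient is a uniform ``reaching the boundary'' observation: since $1-|\ze|\le(1+\de)(1-|z|)$ on $\ga_z$, one can pick $r\in[\rho,1)$ close enough to $1$ so that $|z|>r$ forces $|\ze|>\rho$ for \emph{every} $\ze\in\ga_z$ simultaneously. Here is where the assumption $\T\cap\operatorname{clos}G\ne\emptyset$ enters: it guarantees that points $z\in G$ with $|z|>r$ actually exist, so that the asserted $o$-statement along $G$ is not vacuous.

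For such $z$, combining the Cauchy inequality
\[
|f^{(k)}(z)|\le\f{k!}{\de^k(1-|z|)^k}\,\sup\{|f(\ze)|:\ze\in\ga_z\}
\]
with $|f(\ze)|\le\eps'(1-|\ze|)^\al\le\eps'\,c_{\al,\de}\,(1-|z|)^\al$, where $c_{\al,\de}:=\max\{(1-\de)^\al,(1+\de)^\al\}$, gives $|f^{(k)}(z)|\le\bigl(k!\,c_{\al,\de}\,\de^{-k}\bigr)\eps'(1-|z|)^{\al-k}$. It then suffices to have chosen $\eps'=\eps\,\de^k/(k!\,c_{\al,\de})$ at the outset: the bound collapses to $|f^{(k)}(z)|\le\eps(1-|z|)^{\al-k}$ for all $z\in G$ with $|z|>r$, as required.

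I do not expect any genuine obstacle: the entire content is the bookkeeping in the middle step, i.e.\ making sure that, once $z\in G$ is close enough to $\T$, the whole circle $\ga_z$ lies in the part of $G_0$ where the $o$-smallness of $f$ has already taken effect, uniformly over $\ze\in\ga_z$. This is exactly the point where the uniform comparability $1-|\ze|\asymp1-|z|$ along $\ga_z$ (with constants depending only on $\de$) is used, just as in Lemma~\ref{lem:decay}.
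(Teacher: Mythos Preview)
Your argument is correct and is exactly the ``similar way'' the paper alludes to: rerun the Cauchy-integral estimate of Lemma~\ref{lem:decay} with an $\eps$--$\de$ bookkeeping, using the uniform comparability $1-|\ze|\asymp1-|z|$ on $\ga_z$ to push the whole circle into the region where the $o$-hypothesis is already in force. The only point worth making explicit is that the $o$-bound on $G_0$ extends to $\text{clos}\,G_0\cap\D$ by continuity of $f$ (just as in the paper's own proof of Lemma~\ref{lem:decay}), so that it may indeed be applied on $\ga_z$; otherwise nothing is missing.
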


\medskip\noindent{\it Proof of Theorem \ref{thm:kornontang}.} Let $\ze\in E_\ga(f_0,\dots,f_n)$, 
so that there exist coefficients $\la_j=\la_j(\ze)$ with $\sum_{j=0}^n|\la_j|>0$ which make the 
linear combination 
\begin{equation}\label{eqn:licog}
\sum_{j=0}^n\la_jf_j=:g,
\end{equation}
nontangentially small of order $>\ga$ at $\ze$. At least one of the coefficients, say $\la_k$, is thus nonzero. 
Recalling the notation \eqref{eqn:wro} for the Wronskian, we put 
$$W:=W(f_0,\dots,f_n)$$ 
and 
$$W_k:=W(f_0,\dots,f_{k-1},g,f_{k+1},\dots,f_n).$$ 
It should be noted that $W\not\equiv0$, because the $f_j$'s are linearly independent holomorphic functions.  
Furthermore, it follows from \eqref{eqn:licog} that $W_k=\la_kW$. 
\par Expanding the determinant $W_k$ along its $k$th column (i.\,e., the one that contains $g,g',\dots,g^{(n)}$), 
we get 
\begin{equation}\label{eqn:wrok}
W_k=\sum_{l=0}^ng^{(l)}\Delta_l,
\end{equation}
where $\Delta_l=\Delta_{l,k}$ are the corresponding cofactors. We now claim that each term in this sum has 
nontangential limit $0$ at $\ze$. Consider, for example, the last term $g^{(n)}\Delta_n$. First of all, 
since $g$ is nontangentially small of order $>\ga$ at $\ze$, Lemma \ref{lem:little} shows that $g^{(n)}$ is 
nontangentially small of order $>\ga-n$ at $\ze$; that is, 
\begin{equation}\label{eqn:pozor}
g^{(n)}(z)=o\left((1-|z|)^{\ga-n}\right)\quad\text{as}\quad|z|\to1,\,\,z\in\Ga_{\widetilde M}(\ze),
\end{equation}
for some $\widetilde M>1$. (We have applied the lemma with $G_0=\Ga_M(\ze)$ and $G=\Ga_{\widetilde M}(\ze)$, where 
$1<\widetilde M<M$. The hypothesis \eqref{eqn:ggg} is then fulfilled with a suitable $\de=\de(M,\widetilde M)$.) 
As to the cofactor 
$$\Delta_n=(-1)^{k+n}\det\left\{f_j^{(s)}:\,\,0\le j\le n\,\,(j\ne k),\,\,0\le s\le n-1\right\},$$ 
it is the sum of $n!$ products of the form 
\begin{equation}\label{eqn:prod}
\pm f_{j_1}f'_{j_2}\dots f_{j_n}^{(n-1)},
\end{equation}
where the multiindex $(j_1,\dots,j_n)$ runs through the permutations of 
$$(0,\dots,k-1,k+1,\dots,n).$$ 
For all $j$ and $s$, we have $f_j\in A^{-\be}$ and hence $f_j^{(s)}\in A^{-\be-s}$ (apply Lemma \ref{lem:decay} 
with $G=G_0=\D$, $\de=\f12$ and $\al=-\be$), so that 
$$f_j^{(s)}(z)=O\left((1-|z|)^{-\be-s}\right),\qquad z\in\D.$$ 
It follows that the products \eqref{eqn:prod} are all $O\left((1-|z|)^{-\kappa}\right)$, where 
$$\kappa=n\be+1+2+\dots+(n-1)=n\be+\f{n(n+1)}2-n=\ga-n.$$ 
A similar estimate therefore holds for $\Delta_n$; thus 
$$\Delta_n(z)=O\left((1-|z|)^{n-\ga}\right),\qquad z\in\D.$$ 
Combining this with \eqref{eqn:pozor}, we obtain 
$$g^{(n)}(z)\Delta_n(z)\to0\quad\text{as}\quad|z|\to1,\,\,z\in\Ga_{\widetilde M}(\ze).$$ 
\par The other terms on the right-hand side of \eqref{eqn:wrok} are treated similarly, and we conclude that 
$W_k$ has nontangential limit $0$ at $\ze$. The same is then true for $W=W_k/\la_k$, so we see that 
$E_\ga(f_0,\dots,f_n)$ is a subset of 
$$\left\{\ze\in\T:\,W(z)\to0\text{ as }z\to\ze\text{ nontangentially}\right\}.$$ 
This last set has Lebesgue measure $0$ by virtue of the Lusin--Privalov uniqueness theorem (see \cite{CL}), and the 
required result follows. 
\quad\qed

\medskip\noindent{\it Remark.} In the above proof, Lemma \ref{lem:little} was applied to the case where $G$ 
and $G_0$ are two Stolz angles with the same vertex. Other choices of $G$ and $G_0$, with \eqref{eqn:ggg} fulfilled, 
may lead to further variations on the theme of Theorem \ref{thm:kornontang}. One such choice can be described as follows: 
given a function $h\in H^\infty$ with $\|h\|_\infty=1$, fix a number $\eps\in(0,1)$ and take the level set 
$$\Om(h,\eps):=\{z\in\D:\,|h(z)|<\eps\}$$ 
as $G_0$, then put $G=\Om(h,\eps/2)$. The situation becomes nontrivial if the closure of $G$ hits $\T$. We shall return 
to this in the next section; in particular, see Lemma \ref{lem:levset} below. 

\section{Smooth analytic functions and inner factors} 

By a \lq\lq smooth analytic function" we mean a function in $\mathcal H(\D)$ that is smooth up to $\T$. 
Specifically, we are concerned here with the analytic Lipschitz--Zygmund spaces $\Al$ for $\al>0$. Recall that 
a function $f\in\mathcal H(\D)$ is said to be in $\Al$ if it obeys condition \eqref{eqn:lipkor}, where $m$ 
is the smallest integer in $(\al,\infty)$. In fact, taking $m$ to be any other integer in that interval, 
one arrives at an equivalent definition. It should be mentioned that, in the case $\al\in(0,\infty)\setminus\N$, 
an analytic function $f$ will be in $\Al$ if and only if there exists a constant $C=C_f$ such that 
\begin{equation}\label{eqn:lipond}
|f^{(k)}(z)-f^{(k)}(w)|\le C|z-w|^{\al-k},\qquad z,w\in\D,
\end{equation}
where $k=[\al]$ is the integral part of $\al$; this is a classical result of Hardy and Littlewood. 
The space $A^1$, known as the analytic {\it Zygmund class}, can be described by the appropriate 
second order difference condition on $f$. The higher order Zygmund classes $A^k$ with $k=2,3,\dots$ are 
related to it by the formula $A^k=\{f:f^{(k-1)}\in A^1\}$. 
\par We further recall that a closed subset $E$ of $\D\cup\T$ will be the zero set of some non-null 
function in $\Al$, with any fixed $\al>0$, or in $A^\infty:=\bigcap_{0<\al<\infty}\Al$ if and only if 
it has the two properties below: first, 
\begin{equation}\label{eqn:blacond}
\sum_{z\in E\cap\D}(1-|z|)<\infty
\end{equation}
(i.e., $E\cap\D$ satisfies the {\it Blaschke condition}), and second, 
\begin{equation}\label{eqn:carlcond}
\int_\T\log\text{\rm dist}(\ze,E)\,|d\ze|>-\infty
\end{equation}
(which is known as the {\it Carleson condition}). This characterization is due to Carleson \cite{C} in the case 
where $E\subset\T$, with $\al$ finite, and to Taylor and Williams \cite{TW} in the general case. 
\par In what follows, a subset $E$ (not necessarily closed) of $\D\cup\T$ will be 
called a {\it (BC)-set} if it satisfies \eqref{eqn:blacond} and \eqref{eqn:carlcond}. 
In other words, $E$ is a (BC)-set if and only if its closure, $\text{\rm clos}\,E$, is a zero set for 
$\Al$ or $A^\infty$. The closed (BC)-sets that are contained in $\T$ are called {\it Carleson sets}; these are 
described by condition \eqref{eqn:carlcond} alone. Of course, such sets have Lebesgue measure $0$ on $\T$. 
\par Now suppose that $n\in\N$ and $f\in\Al$ with $\al>n$. Then $f^{(n)}$ is continuous up to $\T$, and there is 
an obvious way to extend the notion of an $n$-deep zero to a boundary point: just say that $f$ has an $n$-deep zero 
at a point $\ze\in\T$ if $f^{(l)}(\ze)=0$ for $l=0,\dots,n$. This done, we arrive at the following version of 
Theorem A for $\Al$ functions. 

\begin{thm}\label{thm:trivlip} Suppose $f_0,\dots,f_n$ are linearly independent functions in $\Al$, where $\al>n$. 
Then there is a (BC)-set $\mathcal E$ with the following property: whenever $\la_0,\dots,\la_n$ are complex numbers 
with $\sum_{j=0}^n|\la_j|>0$, the $n$-deep zeros of the function $\sum_{j=0}^n\la_jf_j$ in $\D\cup\T$ are all contained 
in $\mathcal E$. 
\end{thm}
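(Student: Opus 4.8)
The plan is to mimic the proof of Theorem A, using the Wronskian $W=W(f_0,\dots,f_n)$, and then to show that the relevant boundary behaviour forces $W$ and its derivatives to vanish at the boundary deep-zero points, so that $\mathcal E$ can be taken to be $\mathcal Z(W)$ together with a boundary set where $W$ vanishes to high order. The key structural point is that $W\in\mathcal H(\D)$, $W\not\equiv0$, and moreover $W\in A^{\al'}$ for a suitable $\al'$ (indeed $W$ is a polynomial expression in the $f_j^{(s)}$, each of which lies in $A^{\al-s}$, so one checks $W\in A^{(n+1)\al-n(n+1)/2}$, which is again an analytic Lipschitz--Zygmund space since $\al>n$). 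Having this, $W$ is a non-null function in some $\Al[\al']$, hence its zero set in $\D\cup\T$ is a (BC)-set by the Carleson--Taylor--Williams characterization quoted above. That gives us the candidate for $\mathcal E$.

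First I would fix $\la_0,\dots,\la_n$ with $\sum|\la_j|>0$, set $g=\sum\la_jf_j$, pick $k$ with $\la_k\ne0$, and form $W_k=W(f_0,\dots,g,\dots,f_n)$ (with $g$ in the $k$th slot), so that $W_k=\la_k W$ by column-linearity. If $\ze\in\D$ is an $n$-deep zero of $g$, then the interior argument of Theorem A already shows $W(\ze)=0$. If instead $\ze\in\T$ is an $n$-deep zero of $g$, i.e. $g^{(l)}(\ze)=0$ for $l=0,\dots,n$, then I expand $W_k$ along its $k$th column: $W_k=\sum_{l=0}^n g^{(l)}\Delta_l$ with cofactors $\Delta_l$ built from the $f_j^{(s)}$. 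Each $\Delta_l$ extends continuously to $\ze$ (since all $f_j^{(s)}$ with $s\le n$ do, as $\al>n$), and each $g^{(l)}$ with $l\le n$ vanishes at $\ze$; hence $W_k(\ze)=0$ and so $W(\ze)=0$. Thus every $n$-deep zero of $g$ in $\D\cup\T$ lies in $\mathcal Z(W)\cap(\D\cup\T)$, and since this set is a (BC)-set independent of the $\la_j$'s, taking $\mathcal E$ to be (the closure of) $\mathcal Z(W)$ in $\D\cup\T$ finishes the proof.

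The one subtlety worth spelling out is the claim $W\in A^{\al'}$ with $\al'>0$ still in the Lipschitz--Zygmund range, which is what legitimizes invoking the Carleson--Taylor--Williams result to conclude $\mathcal Z(W)$ is a (BC)-set; for this I would use Lemma \ref{lem:decay} (with $G=G_0=\D$) to get $f_j^{(s)}\in A^{\al-s}$, note that a product of $n+1$ such factors, with one derivative index $s$ from each of $0,1,\dots,n$, lies in $A^{(n+1)\al-(1+2+\cdots+n)}=A^{(n+1)\al-n(n+1)/2}$ by the sub-multiplicativity of the $\Al$ seminorms under multiplication, and that $\al'=(n+1)\al-n(n+1)/2=(n+1)(\al-n/2)>(n+1)\cdot n/2>0$ since $\al>n$. (A minor point: one should also record that $A^{\al'}\subset\mathcal H(\D)$ and that membership in $\Al[\al']$ with $\al'$ large is handled via the higher-order Zygmund relations $A^k=\{f:f^{(k-1)}\in A^1\}$ if needed, but no delicate estimate is involved.) The main obstacle, if any, is purely bookkeeping: making sure the product estimate for $W$ is stated cleanly so that $W$ genuinely belongs to a space covered by the quoted zero-set theorem; the geometry is trivial here because, unlike in Theorem \ref{thm:kornontang}, we now work with honest pointwise vanishing at $\ze$ rather than nontangential smallness, so no Stolz-angle manipulation or Lusin--Privalov input is required.
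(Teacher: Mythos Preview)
Your approach is exactly the paper's: take $\mathcal E$ to be the zero set of $W=W(f_0,\dots,f_n)$ in $\D\cup\T$, check that $W$ is a non-null function in some analytic Lipschitz--Zygmund space, and invoke the Carleson--Taylor--Williams theorem. The boundary part of your argument (cofactor expansion, continuity of the $f_j^{(s)}$ up to $\T$ for $s\le n$) is fine.

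There is one slip worth correcting. The claim that a product of factors in $A^{\al},A^{\al-1},\dots,A^{\al-n}$ lies in $A^{(n+1)\al-n(n+1)/2}$ is false: Lipschitz exponents do not add under multiplication. For instance, $(1-z)^{1/2}$ and $(1+z)^{1/2}$ are both in $A^{1/2}$, but their product $(1-z^2)^{1/2}$ is not in $A^1$, since its second derivative grows like $(1-|z|)^{-3/2}$ near $z=1$. What \emph{is} true, and all you need, is that every factor is bounded (each $f_j^{(s)}\in A^{\al-s}$ with $\al-s>0$) and the least regular one is in $A^{\al-n}$, so each product term---and hence $W$---lies in $A^{\al-n}$. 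This is precisely what the paper asserts. Since $\al-n>0$, you are still in the Lipschitz range and the zero-set characterization applies; your conclusion stands once the exponent is corrected to $\al-n$.
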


The proof is essentially the same as that of Theorem A. The only new feature is that the exceptional set $\mathcal E$, 
defined again as the zero set of $W:=W(f_0,\dots,f_n)$, will now be a (BC)-set. This is due to the fact that, under 
the current assumptions, $W$ is a nontrivial function in $A^{\al-n}$. 
\par Our next result, Theorem \ref{thm:fullmult} below, is similar in nature but deals with a somewhat 
more sophisticated situation. This time, the boundary smallness condition imposed on $g=\sum_{j=0}^n\la_jf_j$ 
will be expressed by saying that $g$ multiplies (every power of) a certain inner function into $\Al$. Recall 
that a function $\th\in H^\infty$ is said to be {\it inner} if 
$$|\th(\ze)|=\lim_{r\to1^-}|\th(r\ze)|=1$$ 
at almost all points $\ze\in\T$. Further, given $f\in\Al$ and an inner function $\th$, we say that {\it $f$ is strongly 
multipliable by $\th$ in $\Al$} to mean that 
\begin{equation}\label{eqn:stromult}
f\th^k\in\Al\quad\text{\rm for all }\,k\in\N. 
\end{equation}
When $0<\al<1$, \eqref{eqn:stromult} is equivalent to the (formally) weaker condition that $f\th\in\Al$, but for 
larger $\al$'s this last condition becomes actually weaker in general; see \cite{Shi1, Shi2} and \cite{DSpb2010} for 
a discussion of this phenomenon. 
\par The following theorem, to be found in \cite{DSpb, DAmer}, provides a criterion for a function 
$f\in\Al$ to be strongly multipliable by an inner function $\th$ in $\Al$. See also 
\cite{DActa, DAdv, Dyn2} for alternative versions and approaches. The criterion will be stated 
in terms of a decrease condition to be satisfied by $f$ along the set 
$$\Omte:=\{z\in\D:|\th(z)|<\eps\},\qquad0<\eps<1.$$ 

\begin{thb} Let $0<\al<\infty$ and let $m$ be an integer with $m>\al$. Given $f\in\Al$ and 
an inner function $\th$, the following conditions are equivalent. 

\smallskip{\rm (i.B)} $f\th^m\in\Al$. 

\smallskip{\rm (ii.B)} $f$ is strongly multipliable by $\th$ in $\Al$. 

\smallskip{\rm (iii.B)} For some $\eps\in(0,1)$, one has 
$$f(z)=O\left((1-|z|)^\al\right),\qquad z\in\Omte.$$ 
\end{thb}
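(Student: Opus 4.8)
The plan is to run the cycle (ii.B) $\Rightarrow$ (i.B) $\Rightarrow$ (iii.B) $\Rightarrow$ (ii.B). The first link is trivial: \eqref{eqn:stromult} with $k=m\in\N$ is literally (i.B). It is also worth recording at the outset that (iii.B) does not really depend on $\eps$ --- it holds for one $\eps\in(0,1)$ iff it holds for all of them --- which itself follows from the level-set lemma, Lemma \ref{lem:levset}. The engine for the two substantive links is the description of $\Al$ by the single bound $\sup_{z\in\D}|g^{(m)}(z)|(1-|z|)^{m-\al}<\infty$, valid for every integer $m>\al$, supplemented by three elementary devices: (a) the Cauchy transfer of Lemmas \ref{lem:decay}--\ref{lem:little}, whereby a decay rate for a holomorphic function on a region passes to the matching rate for each of its derivatives on any subregion lying $\de$-deep inside it; (b) Lemma \ref{lem:levset}, i.e.\ that $\Om(\th,\eps')$ lies $\de$-deep inside $\Omte$ when $0<\eps'<\eps<1$; and (c) the Schwarz--Pick inequality $|\th'(z)|\le(1-|\th(z)|^2)/(1-|z|^2)$, whose iterates give $|(\th^k)^{(i)}(z)|\le C_{i,k}(1-|z|)^{-i}$ in general and the sharper $|(\th^k)^{(i)}(z)|\le C_{i,k}\,\eps^{\,k-i}(1-|z|)^{-i}$ on $\Omte$ --- since in a Leibniz expansion of $(\th^k)^{(i)}$ as a product of $k$ factors $\th$, at least $k-i$ of them remain undifferentiated and so each carries $|\th|<\eps$.

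For (iii.B) $\Rightarrow$ (ii.B), fix $k\in\N$ and estimate $(f\th^k)^{(m)}$ by the Leibniz rule $\sum_{j=0}^m\binom mj f^{(j)}(\th^k)^{(m-j)}$. Taking $\eps'<\eps$ and applying (a)--(b), the hypothesis $f=O((1-|z|)^\al)$ on $\Omte$ upgrades to $f^{(j)}(z)=O((1-|z|)^{\al-j})$ on $\Om(\th,\eps')$ for all $j$, so on $\Om(\th,\eps')$ every Leibniz term is $O((1-|z|)^{\al-j})\cdot O((1-|z|)^{j-m})=O((1-|z|)^{\al-m})$, as wanted. On the complement $\{|\th|\ge\eps'\}$ the crude bound $|(\th^k)^{(i)}(z)|\le C(1-|z|)^{-i}$ fails to control the terms whose accompanying factor $f^{(j)}$ need not decay, and the estimate has to be rescued by a closer examination of $\th$ near $\T$ at the scale $1-|z|$: roughly, wherever that bound is actually sharp the level sets $\Omte$ come within a bounded multiple of $1-|z|$ (for an absolute $\eps$), so the hypothesis on $f$ applies after all, while elsewhere $\th$ is regular enough --- extending holomorphically past the neighbouring boundary arc, or with $1-|z|$ bounded below --- for the term to be harmless. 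This dissection of $\{|\th|\ge\eps'\}$ is the crux of the argument, and I would expect it to be the main obstacle. Summing the $m+1$ terms then gives $f\th^k\in\Al$.

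For (i.B) $\Rightarrow$ (iii.B), put $F:=f\th^m\in\Al$, fix a small $\eps$ and a point $z_0\in\Omte$, and let $B=B(z_0,\de(1-|z_0|))\subset\Om(\th,2\eps)$ by Lemma \ref{lem:levset}. Solving the identity $F^{(m)}=\sum_{j=0}^m\binom mj f^{(j)}(\th^m)^{(m-j)}$ for its $j=0$ term, and using $F\in\Al$, the bounds $f^{(j)}\in A^{\al-j}$, and the sharpened Schwarz--Pick estimate on $\Om(\th,2\eps)$ (which attaches a factor $\eps^{\,j}$ to each $j\ge1$ summand), one obtains on $B$
\[
|f(z)|\,\bigl|(\th^m)^{(m)}(z)\bigr|\ \le\ C(1-|z|)^{\al-m}\ +\ C\!\!\sum_{1\le j<\al}\!\!\eps^{\,j}(1-|z|)^{j-m},
\]
the sum being empty for $0<\al<1$. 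One then selects $z_1\in B$ with $\bigl|(\th^m)^{(m)}(z_1)\bigr|\gtrsim(1-|z_1|)^{-m}$ --- its existence near $z_0$ rests once more on level-set geometry, the set where $\th$ ``acts nontrivially at scale $1-|z|$'' (near a zero, or where the singular measure concentrates) coming within a bounded multiple of $1-|z|$ of every point of $\Omte$ --- and reads off $|f(z_1)|\le C(1-|z_1|)^\al$ when $0<\al<1$, hence $|f(z_0)|\le C'(1-|z_0|)^\al$ by the Lipschitz continuity of $f$ and $1-|z_0|\asymp1-|z_1|$; that is (iii.B). When $\al\ge1$ the surviving sum gives, on a first pass, only $f=O(1-|z|)$ on $\Omte$; feeding this back into the bounds for the $f^{(j)}$ and iterating --- a contracting iteration since $\eps$ is small --- then yields $f=O((1-|z|)^\al)$ on a slightly smaller level set. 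Here too the one genuinely delicate ingredient is the inner-function geometry behind the choices of $\eps$ and of $z_1$, and it is there that \cite{DSpb, DAmer} do the real work.
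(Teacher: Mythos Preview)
The paper does not prove Theorem B; it is imported wholesale from \cite{DSpb, DAmer} (see the sentence introducing it: ``The following theorem, to be found in \cite{DSpb, DAmer}\dots''). There is therefore no in-paper proof to compare your attempt against.

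As for the attempt itself: what you have written is an outline rather than a proof, and you are candid about this --- at both nontrivial implications you flag a step that you do not actually carry out (``This dissection of $\{|\th|\ge\eps'\}$ is the crux of the argument, and I would expect it to be the main obstacle''; ``it is there that \cite{DSpb, DAmer} do the real work''). The ingredients you list --- the Leibniz expansion, Schwarz--Pick control of $(\th^k)^{(i)}$, the level-set inclusion of Lemma \ref{lem:levset}, and the Cauchy transfer of Lemma \ref{lem:decay} --- are indeed the tools that go into the argument in those references. But the two places you defer are genuine: in (iii.B)$\Rightarrow$(ii.B) your informal dichotomy on $\{|\th|\ge\eps'\}$ (``wherever that bound is actually sharp the level sets come within a bounded multiple of $1-|z|$\dots while elsewhere $\th$ is regular enough'') is not an argument, and in (i.B)$\Rightarrow$(iii.B) the existence of the point $z_1$ with $|(\th^m)^{(m)}(z_1)|\gtrsim(1-|z_1|)^{-m}$ inside the given pseudohyperbolic ball is asserted, not proved. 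So the write-up should be read as a roadmap to \cite{DSpb, DAmer}, correctly locating where the work lies, rather than as a self-contained proof.
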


Yet another equivalent condition is obtained from (iii.B) upon replacing the word \lq\lq some" 
by \lq\lq each"; see \cite{DSpb} or \cite{DAmer}. The constant in the $O$-condition will, of course, 
depend on $\eps$. Finally, we remark that if any of the conditions (i.B)--(iii.B) holds for an 
$f\in\Al$, $f\not\equiv0$, with a nontrivial inner function $\th$ (where \lq\lq nontrivial" means 
distinct from a finite Blaschke product), then $f$ vanishes on 
\begin{equation}\label{eqn:sigth}
\si(\th):=\T\cap\text{\rm clos}\,\Om\left(\th,\f12\right),
\end{equation}
and the latter is therefore a Carleson set. This set $\si(\th)$ is called the {\it boundary spectrum} 
of $\th$; the value $\f12$ in \eqref{eqn:sigth} may safely be replaced by any other $\eps\in(0,1)$. 
\par The main result of this section is as follows. 

\begin{thm}\label{thm:fullmult} Let $f_0,\dots,f_n$ be linearly independent functions in $\Al$, 
where $\al>n$. If $g$ is a nontrivial linear combination of the $f_j$'s, and if $\th$ is an inner 
function such that $g$ is strongly multipliable by $\th$ in $\Al$, then $W:=W(f_0,\dots,f_n)$ is 
strongly multipliable by $\th$ in $A^{\al-n}$. 
\end{thm}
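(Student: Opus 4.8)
The plan is to reduce the statement to an application of Theorem~B, much as in the proof of Theorem~\ref{thm:kornontang}, but now using the level-set characterization~(iii.B) of strong multipliability instead of the nontangential decay along a Stolz angle. First I would fix a nonzero coefficient $\la_k$ in the representation $g=\sum_{j=0}^n\la_jf_j$ and introduce the auxiliary Wronskian
$$W_k:=W(f_0,\dots,f_{k-1},g,f_{k+1},\dots,f_n),$$
which (by multilinearity of the determinant in the $k$th column) equals $\la_kW$. Since $\la_k\ne0$, it suffices to show that $W_k$ is strongly multipliable by $\th$ in $A^{\al-n}$. Expanding $W_k$ along its $k$th column gives $W_k=\sum_{l=0}^n g^{(l)}\Delta_l$, where the cofactors $\Delta_l$ are (up to sign) determinants built from the derivatives $f_j^{(s)}$ with $j\ne k$ and $0\le s\le n-1$; each $\Delta_l$ is a sum of $n!$ products of the form $\pm f_{j_1}f_{j_2}'\cdots f_{j_n}^{(n-1)}$.

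Next I would set up the quantitative estimates on the level set $\Omte$. By hypothesis $g\th^m\in\Al$ for all $m$, so Theorem~B (in particular the implication (ii.B)$\Rightarrow$(iii.B)) gives an $\eps\in(0,1)$ with $g(z)=O((1-|z|)^\al)$ on $\Omte$. I then invoke Lemma~\ref{lem:decay}: with $G=\Om(\th,\eps/2)$ and $G_0=\Omte$, the inclusion~\eqref{eqn:ggg} holds for a suitable $\de$ (this is precisely the content of the ``level set'' remark at the end of Section~2, and is the substance of the promised Lemma~\ref{lem:levset}), so $g^{(l)}(z)=O((1-|z|)^{\al-l})$ on $\Om(\th,\eps/2)$ for each $l=0,\dots,n$. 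On the other hand, each $f_j$ lies in $\Al$ with $\al>n>n-1$, hence by Lemma~\ref{lem:decay} (applied on all of $\D$) $f_j^{(s)}(z)=O((1-|z|)^{\al-s})$ on $\D$; the factor $(1-|z|)^{-s}$ here should be read with care, since for $\al\ge s$ one actually has boundedness, but in any case $f_j^{(s)}(z)=O((1-|z|)^{\min(\al-s,0)})$ suffices. Multiplying, a product $\pm f_{j_1}f_{j_2}'\cdots f_{j_n}^{(n-1)}$ contributing to $\Delta_l$ is $O((1-|z|)^{-\kappa_l})$ on $\D$ for an appropriate nonnegative $\kappa_l$; combining with the bound on $g^{(l)}$ on $\Om(\th,\eps/2)$, one finds that every term $g^{(l)}\Delta_l$, and therefore $W_k$ itself, satisfies $W_k(z)=O((1-|z|)^{\al-n})$ on $\Om(\th,\eps/2)$. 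Since we already know (Theorem~\ref{thm:trivlip}, or rather the remark following it) that $W\in A^{\al-n}$, the same holds for $W_k=\la_kW$.

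Finally I would apply Theorem~B in the reverse direction, now with the space $A^{\al-n}$ in place of $\Al$ (legitimate since $\al-n>0$) and with any integer $m'>\al-n$: the bound $W_k(z)=O((1-|z|)^{\al-n})$ on the level set $\Om(\th,\eps/2)$ is exactly condition (iii.B) for $W_k$ in $A^{\al-n}$, hence $W_k$ is strongly multipliable by $\th$ in $A^{\al-n}$, and dividing by $\la_k$ gives the assertion for $W$. I expect the main obstacle to be bookkeeping the exponents correctly: one must check that after differentiating $g$ up to $n$ times and pairing with the $(n-1)$st-order Wronskian cofactors the losses sum to exactly $n$, so that the surviving exponent is $\al-n$ and not less; the borderline ranges where some $f_j^{(s)}$ is merely bounded (rather than genuinely decaying) need to be handled so as not to waste any powers of $(1-|z|)$. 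A secondary point requiring care is the passage from the hypothesis ``$g$ strongly multipliable'' to a single usable $\eps$ and the verification that $\Omte$ and $\Om(\th,\eps/2)$ satisfy~\eqref{eqn:ggg} with a uniform $\de$; this is where Lemma~\ref{lem:levset} enters, and one should make sure it is stated in a form strong enough to cover level sets of $\th$ (as opposed to Stolz angles) before it is used here. \quad\qed
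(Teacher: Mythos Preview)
Your approach is essentially identical to the paper's proof: the same reduction to $W_k=\la_kW$, the same column expansion, the same use of Theorem~B and Lemmas~\ref{lem:decay} and~\ref{lem:levset} to pass from $g$ to $g^{(l)}$ on $\Om(\th,\eps/2)$, and the same reverse application of Theorem~B at the end. The one simplification you are missing (which dissolves your anticipated ``main obstacle'') is that since $\al>n$, every derivative $f_j^{(s)}$ with $0\le s\le n$ lies in $A^{\al-s}\subset A^{\al-n}\subset H^\infty$, so the cofactors $\Delta_l$ are simply \emph{bounded} on $\D$ (note, incidentally, that for $l<n$ the cofactor $\Delta_l$ does involve $f_j^{(n)}$, not only derivatives of order $\le n-1$ as you wrote); combining this with $g^{(l)}(z)=O((1-|z|)^{\al-l})=O((1-|z|)^{\al-n})$ on $\Om(\th,\eps/2)$ immediately gives the required estimate on $W_k$, and no delicate exponent bookkeeping is needed.
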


Thus, the inner functions $\th$ that arise in connection with {\it all} the nontrivial $g$'s as 
above (in the sense that $g$ is strongly multipliable by $\th$ in $\Al$) are actually related 
in a similar way, but with $\al-n$ in place of $\al$, to a {\it single} function $W$ in $A^{\al-n}$. 
In particular, {\it the boundary spectrum $\si(\th)$ of any such $\th$ is contained in a fixed Carleson 
set} (namely, in the boundary zero set of $W$). This last property can no longer be guaranteed if we modify 
the hypotheses of Theorem \ref{thm:fullmult} by taking $\al=n$, as the following example shows. 

\medskip\noindent{\it Example.} Let 
$$B_1(z):=\prod_{j=1}^\infty\f{1-2^{-j}-z}{1-(1-2^{-j})z},\qquad z\in\D,$$ 
so that $B_1$ is the Blaschke product with zeros $\{1-2^{-j}\}$. It is fairly easy to check that, 
for $0<\eps<1$, the level set $\Om(B_1,\eps)$ is contained in some Stolz angle $\Ga_M(1)$. Consequently, 
given $\ze\in\T$, the Blaschke product $B_\ze(z):=B_1(\bar\ze z)$ will have its level set $\Om(B_\ze,\eps)$ 
in $\Ga_M(\ze)$. The function $g_\ze(z):=(z-\ze)^n$ is a linear combination of $1,\dots,z^n$ and is 
$O\left((1-|z|)^n\right)$ on $\Ga_M(\ze)$, so we deduce from Theorem B that $g_\ze$ is strongly multipliable 
by $B_\ze$ in $A^n$. On the other hand, we have $\si(B_\ze)=\{\ze\}$ and hence $\bigcup_{\ze\in\T}\si(B_\ze)=\T$. 

\medskip To prove Theorem \ref{thm:fullmult}, the following elementary lemma will be needed. 

\begin{lem}\label{lem:levset} Let $h\in H^\infty$, $\|h\|_\infty\le1$, and let $0<\eps<1$. Then, for 
every $z\in\Om(h,\f\eps2)$, the disk $B(z,\f\eps4(1-|z|))$ is contained in $\Om(h,\eps)$. 
\end{lem}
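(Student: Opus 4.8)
The plan is to prove Lemma \ref{lem:levset} by a Schwarz--Pick (hyperbolic distance) estimate, which is the natural tool for controlling how much $|h|$ can change over a Euclidean disk whose radius is comparable to $1-|z|$. The point is that a bounded holomorphic function with sup-norm at most $1$ is a contraction from the Poincar\'e metric on $\D$ to itself, so if $z$ is close to a small sublevel set of $h$ and $w$ is hyperbolically close to $z$, then $h(w)$ cannot be much larger than $h(z)$.

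\medskip

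Concretely, fix $z\in\Om(h,\eps/2)$, so $|h(z)|<\eps/2$, and let $w\in B\bigl(z,\tfrac\eps4(1-|z|)\bigr)$. First I would bound the hyperbolic distance $\rho(z,w)$ in $\D$. Since $|w-z|<\tfrac\eps4(1-|z|)$ and $\eps<1$, the point $w$ lies in the disk $B\bigl(z,\tfrac14(1-|z|)\bigr)$, on which $1-|u|$ is comparable to $1-|z|$ (indeed $1-|u|\ge\tfrac34(1-|z|)$ there); integrating $|du|/(1-|u|^2)\le |du|/(1-|u|)$ along the segment from $z$ to $w$ gives $\rho(z,w)\le C\eps$ for an absolute constant $C$, which is small once we also recall $\eps<1$. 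Then, by the Schwarz--Pick inequality applied to $h:\D\to\ov\D$ (if $\|h\|_\infty=1$ exactly; if $\|h\|_\infty<1$ the same inequality holds a fortiori after rescaling, or one simply notes $h$ still maps into $\D$), the pseudohyperbolic distance satisfies
$$
\left|\f{h(w)-h(z)}{1-\ov{h(z)}h(w)}\right|\le\f{|w-z|}{|1-\ov z w|}\,.
$$
The right-hand side is at most $\tfrac\eps4(1-|z|)/(1-|z|)=\eps/4$ (using $|1-\ov z w|\ge 1-|z|\,|w|\ge 1-|z|$). Solving the resulting inequality for $|h(w)|$ and using $|h(z)|<\eps/2$ together with $\eps<1$, one gets $|h(w)|<\eps$, i.e. $w\in\Om(h,\eps)$. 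The constants $\tfrac\eps2$, $\tfrac\eps4$ are generous enough that the arithmetic closes comfortably; in fact it suffices to check that $\bigl(\tfrac\eps2+\tfrac\eps4\bigr)/\bigl(1+\tfrac{\eps^2}{8}\bigr)<\eps$, which is clear.

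\medskip

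The only mild obstacle is the bookkeeping in the last step: from the pseudohyperbolic bound one must algebraically extract $|h(w)|<\eps$, which means writing $h(w)=\f{h(z)+\zeta}{1+\ov{h(z)}\zeta}$ with $|\zeta|\le\eps/4$ and estimating the modulus of this M\"obius image, using $|h(z)|<\eps/2$. This is entirely routine and I would not belabor it. An alternative, slightly slicker route avoids pseudohyperbolic algebra altogether: apply the mean value inequality to $h$ on the disk $B\bigl(z,\tfrac12(1-|z|)\bigr)\subset\D$, where $|h|\le1$, to get $|h'(z)|\le 2/(1-|z|)$; more generally $|h'(u)|\le C/(1-|z|)$ for $u\in B\bigl(z,\tfrac14(1-|z|)\bigr)$ by the same Cauchy-estimate argument (this is exactly Lemma \ref{lem:decay} with $k=1$, $\al=0$, $G$ a small disk inside $G_0$ a slightly larger one). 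Then for $w$ in the $\tfrac\eps4(1-|z|)$-disk, integrating $h'$ along the segment gives $|h(w)-h(z)|\le \tfrac\eps4(1-|z|)\cdot C/(1-|z|)=C\eps/4$, and with the correct absolute constant (which one can pin down to be $\le 2$ by a careful Cauchy estimate on $B(z,\tfrac12(1-|z|))$, giving $|h(w)-h(z)|\le\eps/2$) we again conclude $|h(w)|<\eps$. I would present whichever of these is cleanest; the Cauchy-estimate version has the advantage of reusing machinery already in the paper.
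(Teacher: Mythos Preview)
Your primary argument is correct and is essentially the paper's own proof: both bound the pseudohyperbolic distance $\rho(z,w)<\eps/4$ directly from $|1-\bar z w|\ge 1-|z|$ and then apply the Schwarz--Pick contraction $\rho(h(z),h(w))\le\rho(z,w)$. The only difference is in the closing step: where you parametrize $h(w)$ as a M\"obius image and check $(\tfrac\eps2+\tfrac\eps4)/(1+\tfrac{\eps^2}{8})<\eps$, the paper takes the shorter route $|h(z)-h(w)|\le|1-\ov{h(z)}h(w)|\cdot\rho(h(z),h(w))\le 2\rho(z,w)<\tfrac\eps2$, whence $|h(w)|<|h(z)|+\tfrac\eps2<\eps$; this avoids the M\"obius bookkeeping entirely. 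Your alternative Cauchy-estimate route also works and, as you note, ties in with Lemma~\ref{lem:decay}, but the Schwarz--Pick version is cleaner here and is what the paper uses.
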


\begin{proof} One readily checks that $B(z,\f\eps4(1-|z|))$ is contained in the non-Euclidean disk 
$$K\left(z,\f\eps4\right):=\left\{w\in\D:\,\rho(z,w)<\f\eps4\right\},$$
where $\rho(\cdot,\cdot)$ is the pseudohyperbolic distance on $\D$ given by 
$$\rho(z,w):=\left|\f{z-w}{1-\bar wz}\right|.$$ 
Now, given $z\in\Om(h,\f\eps2)$ and $w\in K\left(z,\f\eps4\right)$, we use the well-known inequality 
$$\rho(h(z),h(w))\le\rho(z,w)$$ 
(see \cite[Chapter I]{G}) to deduce that 
$$|h(z)-h(w)|\le2\rho(z,w)<\f\eps2$$ 
and hence 
$$|h(w)|\le|h(z)|+\f\eps2<\eps.$$ 
This shows that $K\left(z,\f\eps4\right)\subset\Om(h,\eps)$, and the required conclusion follows. 
\end{proof}

\medskip\noindent{\it Proof of Theorem \ref{thm:fullmult}.} Let $\th$ be inner, and suppose 
\begin{equation}\label{eqn:glico}
g=\sum_{j=0}^n\la_jf_j 
\end{equation}
is a non-null function that is strongly multipliable by $\th$ in $\Al$. As in the proof of 
Theorem \ref{thm:kornontang} above, we now fix an index $k\in\{0,\dots,n\}$ with $\la_k\ne0$ 
and consider the Wronskian 
\begin{equation}\label{eqn:wrog}
W_k:=W(f_0,\dots,f_{k-1},g,f_{k+1},\dots,f_n),
\end{equation}
so that $W_k=\la_kW$. We also recall that $W\not\equiv0$ and $W\in A^{\al-n}$; the latter is 
due to the fact that all the entries of the corresponding Wronskian matrix are in $A^{\al-n}$. 
\par Expanding, as before, the determinant \eqref{eqn:wrog} along the column 
$\left(g,g',\dots,g^{(n)}\right)^T$, we obtain 
\begin{equation}\label{eqn:wrokkk}
W_k=\sum_{l=0}^ng^{(l)}\Delta_l,
\end{equation}
where $\Delta_l$ are the appropriate cofactors. Since $g$ is strongly multipliable by $\th$ in $\Al$, 
it follows from Theorem B that 
$$g(z)=O\left((1-|z|)^\al\right),\qquad z\in\Omte,$$ 
for some $\eps\in(0,1)$. In view of Lemma \ref{lem:decay}, this yields 
$$g^{(l)}(z)=O\left((1-|z|)^{\al-l}\right),\qquad z\in\Om\left(\th,\f\eps2\right),$$ 
for each $l\in\N$. (We have applied Lemma \ref{lem:decay} with $G_0=\Omte$ and $G=\Om(\th,\eps/2)$. 
The hypothesis \eqref{eqn:ggg} is then fulfilled with $\de=\eps/4$, as Lemma \ref{lem:levset} shows.) 
In particular, 
\begin{equation}\label{eqn:frog}
g^{(l)}(z)=O\left((1-|z|)^{\al-n}\right),\qquad z\in\Om\left(\th,\f\eps2\right),\quad 0\le l\le n.
\end{equation}
The cofactors $\Delta_l$ are all in $A^{\al-n}$, and hence in $H^\infty$, so \eqref{eqn:wrokkk} 
and \eqref{eqn:frog} together imply that 
$$W_k(z)=O\left((1-|z|)^{\al-n}\right),\qquad z\in\Om\left(\th,\f\eps2\right).$$ 
A similar estimate holds then for $W=W_k/\la_k$, and another application of Theorem B convinces us 
that $W$ is strongly multipliable by $\th$ in $A^{\al-n}$, as desired. 
\quad\qed

\medskip We supplement Theorem \ref{thm:fullmult} with the next result, which involves 
the {\it star-invariant subspace} 
\begin{equation}\label{eqn:kth}
K_\th:=H^2\ominus\th H^2
\end{equation}
of the Hardy space $H^2$. Here, the term \lq\lq star-invariant" means invariant under the backward shift 
operator $f\mapsto(f-f(0))/z$. It is well known that the (closed and proper) star-invariant subspaces of $H^2$ 
are precisely those of the form \eqref{eqn:kth}, with $\th$ an inner function; see \cite{N}. Also, the Korenblum 
spaces $A^{-\be}$ from the previous section will now reappear, along with the Lipschitz--Zygmund $\Al$ spaces. 

\begin{thm}\label{thm:wrokth} Let $f_0,\dots,f_n$ be linearly independent functions in $\Al$, where $\al>n$. 
Further, let $F\in A^{-\be}$ with $\be=\al-n$, and let $\th$ be an inner function such that $FW\in K_\th$, 
where $W:=W(f_0,\dots,f_n)$. Assume finally that there exists a nontrivial linear combination of the $f_j$'s 
which is strongly multipliable by $\th$ in $\Al$. Then $FW\in H^\infty$. 
\end{thm}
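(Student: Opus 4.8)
The plan is to combine Theorem \ref{thm:fullmult} with the structure theory of the star-invariant subspaces $K_\th$. By hypothesis there is a nontrivial linear combination $g$ of the $f_j$'s which is strongly multipliable by $\th$ in $\Al$, so Theorem \ref{thm:fullmult} applies and tells us that $W$ is strongly multipliable by $\th$ in $A^{\al-n}$, i.e.\ $W\th^k\in A^{\al-n}$ for all $k\in\N$. In particular $W$ itself belongs to $A^{\al-n}=\Be$, and the boundary spectrum $\si(\th)$ is contained in the boundary zero set of $W$ (unless $\th$ is a finite Blaschke product, a case one should dispose of first: if $\th$ is a finite Blaschke product then $K_\th$ is finite-dimensional, consists of rational functions with poles off $\ov\D$, hence $FW\in K_\th$ is automatically bounded).

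So assume $\th$ is not a finite Blaschke product. The key point is to exploit the estimate coming out of Theorem B applied to $W$: strong multipliability of $W$ by $\th$ in $\Be$ gives, for some $\eps\in(0,1)$,
\begin{equation}\label{eqn:plan-west}
W(z)=O\left((1-|z|)^{\al-n}\right)=O\left((1-|z|)^{\be}\right),\qquad z\in\Omte.
\end{equation}
Now use the hypothesis $F\in A^{-\be}$, which means precisely $|F(z)|\le C(1-|z|)^{-\be}$ throughout $\D$. Multiplying, we get $FW=O(1)$ on the level set $\Omte$. The remaining task is to upgrade this bound on $\Omte$ to a bound on all of $\D$, and this is where membership in $K_\th$ enters decisively.

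The mechanism for the upgrade is the standard one for $K_\th$: a function $h\in K_\th$ satisfies $h=\th\,\ov{z\ov h}$ a.e.\ on $\T$ (the defining orthogonality $h\perp \th H^2$), and more to the point, $h/\th$ extends analytically across $\T\setminus\si(\th)$ and, on the level sets where $|\th|$ is bounded below, $|h|$ is controlled by a harmonic-majorant / Cauchy-integral argument. Concretely, write $h:=FW\in K_\th$. Off the set $\Omte$ we have $|\th(z)|\ge\eps$, so there $|h(z)|=|\th(z)|\,|h(z)/\th(z)|\le \eps^{-1}|(h/\th)(z)|$ would be useless directly; instead the cleaner route is: $h\in K_\th\subset H^2$, so $h$ has nontangential boundary values $h^*\in L^2(\T)$, and since $FW$ is already known to be $O(1)$ on $\Omte$ while on $\T$ one has $|h^*|=|F^*W^*|$ with $W^*$ bounded (as $W\in\Be$ has bounded boundary values? — no, $\Be$ functions need not be bounded, so one must be careful) — here I would instead argue that $h^*=0$ a.e.\ on $\si(\th)$ because $W$ vanishes on $\si(\th)$ and $F$ merely has controlled growth, so $F^*W^*=0$ there, while off $\si(\th)$ the function $h/\th$ is analytic and one gets a pointwise bound on $h$ on each compact subset of $\D$ away from the accumulation set of $\si(\th)$. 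Then the maximum principle applied to $h$ on the components of $\D\setminus\Omte$, whose boundaries consist of arcs of $\{|\th|=\eps\}$ (where \eqref{eqn:plan-west} gives $|h|=O(1)$) together with portions of $\T$ lying in $\si(\th)$ (where $h^*=0$), forces $|h|=O(1)$ on all of $\D\setminus\Omte$, hence everywhere.

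The main obstacle, and the step requiring the most care, is the boundary-value bookkeeping: justifying that $FW$ has a well-defined $L^2$ (or even just nontangential) boundary function, that this function vanishes a.e.\ on $\si(\th)$, and that the "outer" part of $\D$ — namely $\D\setminus\Omte$ — really is controlled entirely by the two pieces of its topological boundary (the curve $\{|\th|=\eps\}$ and the arc set $\si(\th)\subset\T$), so that a Phragmén–Lindelöf / maximum-principle argument closes. One must also handle the fact that $\D\setminus\Omte$ may have infinitely many components and that its boundary touches $\T$ on a set of measure zero; the right tool there is that $K_\th$ functions are determined by their behaviour on $\Omte$ together with the vanishing on $\si(\th)$, which is essentially the content of the level-set results alluded to in the Remark after Theorem \ref{thm:kornontang} and in Lemma \ref{lem:levset}. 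Once the boundary analysis is in place, the conclusion $FW\in H^\infty$ is immediate from the maximum principle.
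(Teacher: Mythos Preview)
Your argument is correct and identical to the paper's up through the point where you establish $FW=O(1)$ on $\Omte$: apply Theorem \ref{thm:fullmult} to get $W$ strongly multipliable by $\th$ in $A^{\al-n}$, invoke Theorem B to obtain the decay estimate \eqref{eqn:plan-west}, and multiply by the growth bound on $F\in A^{-\be}$. (Two side remarks: the finite-Blaschke case split is harmless but unnecessary, and your worry about whether $W^*$ is bounded is misplaced, since $W\in\Be$ with $\be=\al-n>0$ is a Lipschitz class and hence $W\in H^\infty$.)

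The genuine gap is the final upgrade from boundedness on $\Omte$ to boundedness on all of $\D$. The paper does not attempt this from scratch; it simply cites Cohn's maximum principle for $K_\th$ (stated in the paper as Lemma \ref{lem:mpcohn}): any $f\in K_\th$ satisfying $\sup_{\Omte}|f|<\infty$ for some $\eps\in(0,1)$ lies in $H^\infty$. This is a nontrivial result whose proof relies on the pseudocontinuation structure of $K_\th$, not on a naive Phragm\'en--Lindel\"of argument.

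Your sketch of that upgrade contains a concrete error. You assert that the boundary (on $\T$) of a component of $\D\setminus\Omte$ lies in $\si(\th)$. In fact the opposite is true: $\si(\th)$ is by definition the part of $\T$ approached by $\Omte$, while $\T\setminus\si(\th)$ --- a set of full measure --- lies in the closure of $\D\setminus\Omte$. So on the $\T$-portion of the boundary of $\D\setminus\Omte$ you have only the $L^2$ boundary values of $h=FW\in K_\th\subset H^2$, with no $L^\infty$ control whatsoever; the claim ``$h^*=0$ a.e.\ on $\si(\th)$'' is vacuous since $\si(\th)$ has measure zero. The maximum principle therefore gives nothing, and the argument collapses at exactly the point you flagged as ``the step requiring the most care.'' The fix is simply to invoke Lemma \ref{lem:mpcohn}.
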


The statement becomes especially transparent when $n=0$, in which case it reduces to the following. 

\begin{cor}\label{cor:spec} Given $0<\al<\infty$, suppose that $F\in A^{-\al}$, $g\in\Al$, and $\th$ is an 
inner function. If $g$ is strongly multipliable by $\th$ in $\Al$, and if $Fg\in K_\th$, then $Fg\in H^\infty$. 
\end{cor}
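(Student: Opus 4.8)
The plan is to prove Corollary~\ref{cor:spec} directly, noting first that it is precisely the case $n=0$ of Theorem~\ref{thm:wrokth}: take $f_0=g$, so that $W(f_0)=g$ and $\be=\al$, and observe that every nontrivial linear combination of the single function $f_0$ is a nonzero scalar multiple of $g$. Thus this special case is really the nucleus of Theorem~\ref{thm:wrokth}, the general statement of that theorem being obtained from it by the same Wronskian-expansion device already used in the proofs of Theorems~\ref{thm:kornontang} and~\ref{thm:fullmult}. One may assume $\th$ nontrivial, since for a finite Blaschke product $\th$ the space $K_\th$ is finite-dimensional and contained in $H^\infty$, so there is nothing to prove.

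\emph{Step 1: $Fg$ is bounded on a level set of $\th$.} Because $g$ is strongly multipliable by $\th$ in $\Al$, condition (iii.B) of Theorem~B gives an $\eps\in(0,1)$ with $g(z)=O\big((1-|z|)^\al\big)$ on $\Om(\th,\eps)$. Since $F\in A^{-\al}$ means that $|F(z)|(1-|z|)^\al$ is bounded on $\D$, multiplying these two estimates shows that $G:=Fg$ satisfies $|G(z)|\le C$ for every $z\in\Om(\th,\eps)$. (In the setting of the full Theorem~\ref{thm:wrokth} one first expands $FW_k=\sum_{l=0}^n g^{(l)}F\Delta_l$ along the $g$-column exactly as in Section~3, bounds $g^{(l)}(z)=O\big((1-|z|)^{\al-l}\big)$ on $\Om(\th,\eps/2)$ via Lemmas~\ref{lem:decay} and~\ref{lem:levset}, and uses $\Delta_l\in A^{\al-n}\subset H^\infty$ together with $F\in A^{-(\al-n)}$; the outcome is that $|FW|\le C$ on $\Om(\th,\eps/2)$, which is the situation of Step~1 with $\al$ replaced by $\al-n$.)

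\emph{Step 2: upgrading to global boundedness.} It remains to bound $G$ on $\{z\in\D:|\th(z)|\ge\eps\}$; once $\sup_\D|G|<\infty$ we are done, for that is exactly $G=Fg\in H^\infty$. This is where the hypothesis $G\in K_\th$ --- not merely $G\in H^2$ --- must genuinely be used. The relevant structural facts are: (a) a function in $K_\th$ extends analytically across $\T\setminus\si(\th)$ and admits a pseudocontinuation of bounded type to $\{|z|>1\}$ (see~\cite{N}); and (b) the reproducing-kernel bound $|G(z)|^2(1-|z|^2)\le\|G\|_2^2\big(1-|\th(z)|^2\big)$. By (a), $G$ is locally bounded at every point of $\T\setminus\si(\th)$, so the only issue is the behaviour of $G$ near the boundary spectrum $\si(\th)$; and there the Step~1 bound is available, because $\si(\th)\subset\overline{\Om(\th,\eps)}$. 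Concretely, on the open set $\{|\th|>\eps\}$ the inner function $\th$ is zero-free, so $G/\th$ is holomorphic there, is bounded by $C/\eps$ on the interior boundary $\{|\th|=\eps\}$ (Step~1 and continuity), and has boundary modulus equal to that of $G^*\in L^2(\T)$ on the remaining, $\T$-part of its boundary; feeding (a) and (b) into a Phragm\'en--Lindel\"of / maximum-principle argument for $G/\th$ on $\{|\th|>\eps\}$ should yield $|G|\le C'$ there, hence on all of $\D$.

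The crux --- and the step I expect to be the main obstacle --- is Step~2 near $\si(\th)$: a generic $H^2$ function that is bounded on $\Om(\th,\eps)$ need not be bounded on $\D$, so one cannot avoid exploiting the rigidity of the model space $K_\th$. It is precisely the pseudocontinuation together with the reproducing-kernel estimate that must be made to rule out a tangential blow-up of $G$ as $z\to\si(\th)$ through $\{|\th|\ge\eps\}$. Turning this heuristic into a clean argument --- setting up the right Hardy-space framework on the (possibly multiply connected) domain $\{|\th|>\eps\}$ and matching the interior bound of Step~1 against the merely $L^2$ boundary data on $\T$ --- is where the real work lies.
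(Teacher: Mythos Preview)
Your Step~1 is exactly what the paper does: Theorem~B gives $g(z)=O\bigl((1-|z|)^\al\bigr)$ on $\Om(\th,\eps)$, and multiplying by the $A^{-\al}$ bound for $F$ yields $\sup\{|Fg(z)|:z\in\Om(\th,\eps)\}<\infty$. That is the entire content of the paper's argument, apart from a single citation.

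Your Step~2, however, is not carried out --- and you say so yourself. The paper handles this step by simply quoting Cohn's maximum principle for $K_\th$ (stated immediately after the corollary as Lemma~\ref{lem:mpcohn}, taken from~\cite{Co1}): any $f\in K_\th$ that is bounded on some level set $\Om(\th,\eps)$ lies in $H^\infty$. With that lemma in hand, the proof is one line. What you are attempting in Step~2 is to reprove Cohn's theorem from scratch. The ingredients you list --- pseudocontinuation across $\T\setminus\si(\th)$ and the reproducing-kernel inequality $|G(z)|^2(1-|z|^2)\le\|G\|_2^2\,(1-|\th(z)|^2)$ --- are relevant, but your Phragm\'en--Lindel\"of sketch on $\{|\th|>\eps\}$ does not close as written: on the $\T$-portion of the boundary you have only $L^2$ control of $G$, the reproducing-kernel bound gives merely $|G(z)|=O\bigl((1-|z|)^{-1/2}\bigr)$ there, and there is no evident subharmonic majorant that lets a two-constants argument for $G/\th$ go through. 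Cohn's proof uses the model-space structure more carefully than this. So either cite his result, as the paper does, or be prepared to supply a full proof of it; the present heuristic is a genuine gap.
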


We now cite, as Lemma \ref{lem:mpcohn} below, a remarkable \lq\lq maximum principle" for $K_\th$ that was 
proved by Cohn in \cite{Co1}. 

\begin{lem}\label{lem:mpcohn} Let $\th$ be inner, and let $f\in K_\th$. If 
\begin{equation}\label{eqn:leveps}
\sup\{|f(z)|:\,z\in\Omte\}<\infty
\end{equation}
for some $\eps\in(0,1)$, then $f\in H^\infty$.
\end{lem}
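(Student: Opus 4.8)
The plan is to prove the quantitative form of the statement, namely that
\[
\sup_{\D}|f|\le C(\eps)\,\sup_{\Om(\th,\eps)}|f|
\]
for a constant $C(\eps)$ depending only on $\eps$; boundedness follows at once. Put $A:=\sup_{\Om(\th,\eps)}|f|$, finite by hypothesis. Since $f$ is holomorphic, hence continuous, on $\D$, we have $|f|\le A$ throughout $\text{\rm clos}\,\Om(\th,\eps)\cap\D$, and in particular along the level curve $\{|\th|=\eps\}$ (every point with $|\th|=\eps$ is a limit of points with $|\th|<\eps$, by the open mapping theorem). It therefore remains to bound $f$ on the open set $G:=\{z\in\D:|\th(z)|>\eps\}$, whose boundary inside $\D$ is $\{|\th|=\eps\}$, where $f$ is already under control, and whose remaining boundary lies on $\T$. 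The whole difficulty is that, a priori, $f$ lies only in $H^2$, so its boundary values on $\T$ are merely square-integrable and the ordinary maximum principle cannot be applied on $G$.

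To overcome this I would first record the relevant structure of the model space $K_\th$. Since $f\in K_\th$ means $\ov\th f\perp H^2$ in $L^2(\T)$, the product $\th\ov f$ lies in $H^2$ and vanishes at the origin, so $g:=\ov z\,\th\,\ov f$ defines a function in $H^2$. One checks that in fact $g\in K_\th$, that $|g|=|f|$ almost everywhere on $\T$, and that the boundary identity $f=\ov z\,\th\,\ov g$ holds there; this is the conjugation on $K_\th$, and it is what encodes the anti-analyticity of $\ov\th f$. A standard pointwise bound comes from the reproducing kernel $k_z(\zeta)=\f{1-\ov{\th(z)}\th(\zeta)}{1-\ov z\zeta}$ of $K_\th$: from $|f(z)|=|\langle f,k_z\rangle|\le\|f\|_2\,\|k_z\|_2$ together with $\|k_z\|_2^2=k_z(z)=\f{1-|\th(z)|^2}{1-|z|^2}$ one gets
\[
|f(z)|^2\,(1-|z|^2)\le\|f\|_2^2\,\bigl(1-|\th(z)|^2\bigr),\qquad z\in\D .
\]
Note, however, that the factor $1/(1-|z|^2)$ makes this degenerate as $|z|\to1$, so by itself it yields no boundedness near $\T$; this already signals where the difficulty lies.

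The heart of the argument is to bound $f$ on the part of $G$ approaching $\T$. Here I would represent $f(z)$, for $z\in G$, by a Cauchy--Green integral over $\partial G$ (working first on the truncations $G\cap\{|\zeta|<r\}$ and letting $r\to1$) and, on the $\T$-portion of the boundary, substitute $f=\ov z\,\th\,\ov g$, thereby trading the uncontrolled boundary values of $f$ for the analytic function $g$, now multiplied by the factor $\th$. Because $|\th(z)|>\eps$ throughout $G$, this factor, combined with the reproducing-kernel structure of the Cauchy kernel, is what should supply the quantitative gain that lets the boundary integral be dominated by the level-set data rather than by the full $L^2$ boundary behaviour; together with $|f|\le A$ on $\{|\th|=\eps\}$ this gives the asserted inequality. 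Lemma \ref{lem:levset} is available to pass between $\Om(\th,\eps)$ and $\Om(\th,\eps/2)$ when the geometry of $G$ near $\T$ must be handled, and Lemma \ref{lem:decay} to convert sup-bounds into derivative estimates if needed.

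The main obstacle is exactly this last transfer. Since $|g|=|f|$ on $\T$, the substitution is not pointwise an improvement, so the required gain must come entirely from analytic cancellation in the kernel produced by $\th(z)$ and $\th(\zeta)$; verifying that this cancellation genuinely yields a kernel analytic in $z\in G$ with decay governed by $\eps$ is the non-elementary content of the theorem. That some such Bernstein--Markov-type phenomenon must hold is already visible in the model case $\th(z)=z^N$, where $K_\th$ consists of polynomials of degree below $N$ and $\Om(\th,\eps)$ is the disk $\{|z|<\eps^{1/N}\}$. Should the direct Cauchy--Green estimate prove unwieldy, an alternative I would try is the Aleksandrov--Clark representation of $K_\th$ functions against the family of Clark measures of $\th$, which packages the same cancellation in a form better suited to passing from an $L^\infty$ bound on $\{|\th|\le\eps\}$ to one on $\{|\th|>\eps\}$.
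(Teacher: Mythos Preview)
The paper does not prove this lemma at all: it is quoted verbatim as a result of Cohn and attributed to \cite{Co1} (``We now cite, as Lemma \ref{lem:mpcohn} below, a remarkable `maximum principle' for $K_\th$ that was proved by Cohn in \cite{Co1}''). There is therefore no in-paper argument to compare your proposal against.

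As for the proposal itself, it is not a proof but a plan, and you say so yourself. The structural observations are all correct: the conjugation $g=\ov z\,\th\,\ov f$ on $\T$, the reproducing-kernel bound $|f(z)|^2(1-|z|^2)\le\|f\|_2^2(1-|\th(z)|^2)$, and the reduction to controlling $f$ on $G=\{|\th|>\eps\}$. But the central step---the Cauchy--Green representation on $G$ followed by the substitution $f=\ov\ze\,\th\,\ov g$ on the $\T$-portion of $\p G$---is left as a hope (``this factor \dots\ is what should supply the quantitative gain''), and you explicitly flag the obstacle: since $|g|=|f|$ a.e.\ on $\T$, the substitution is no pointwise improvement, and the needed cancellation in the resulting kernel is precisely the content of the theorem. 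Moreover, the Cauchy--Green integral over $\p G$ is already delicate before any substitution: the level curve $\{|\th|=\eps\}$ can have infinite length and accumulate on $\T$, so even the ``controlled'' piece of the boundary integral needs justification. The fallback to Clark measures is likewise only named, not executed. In short, you have correctly located where the difficulty lives, but nothing here closes it; for the actual argument one must go to Cohn's paper.
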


\medskip\noindent{\it Proof of Theorem \ref{thm:wrokth}.} Proceeding as in the proof of Theorem \ref{thm:fullmult}, 
we verify that $W:=W(f_0,\dots,f_n)$ is strongly multipliable by $\th$ in $A^{\be}$, so that 
$$W(z)=O\left((1-|z|)^{\be}\right),\qquad z\in\Omte,$$ 
with a suitable $\eps\in(0,1)$. Since $F\in A^{-\be}$, we also have 
$$F(z)=O\left((1-|z|)^{-\be}\right),\qquad z\in\D.$$ 
Consequently, the function $f:=FW$ satisfies \eqref{eqn:leveps}, and 
Lemma \ref{lem:mpcohn} now tells us that $f\in H^\infty$. 
\quad\qed

\section{Hardy--Sobolev spaces: deep zeros and singular factors} 

\par Here, we shall be concerned with functions in the {\it Hardy--Sobolev spaces} 
$$H^p_k:=\{f\in\mathcal H(\D):\,f^{(k)}\in H^p\},$$ 
with $p>0$ and $k\in\N$, where $H^p=H^p(\D)$ are the usual (holomorphic) {\it Hardy spaces} on the 
disk. 
\par It is well known that $H^p_1$ is contained in the Lipschitz space $A^{1-1/p}$ for $p>1$, while 
\begin{equation}\label{eqn:diskalg}
H^1_1\subset H^\infty
\end{equation}
(moreover, the functions from $H^1_1$ are continuous up to $\T$) and 
\begin{equation}\label{eqn:hardlit}
H^p_1\subset H^{p/(1-p)}\quad\text{\rm for}\quad0<p<1.
\end{equation}
These results, which are chiefly due to Hardy and Littlewood, can be found in \cite{Du}. Iterating them, one arrives 
at the appropriate embedding theorems for $H^p_k$ with $k\ge2$. In particular, we always have $H^p_k\subset H^p$. 

\par Now let us recall that any nontrivial function $f\in H^p$ can be factored canonically 
as $f=I\mathcal O$, where $I$ is inner and $\mathcal O$ is outer. (A function $\mathcal O\in\mathcal H(\D)$ 
is called {\it outer} if $\log|\mathcal O(z)|$ agrees, for $z\in\D$, with the harmonic 
extension of an integrable function on $\T$.) The inner factor $I$ can be further decomposed 
as $I=BS$, where $B$ is a {\it Blaschke product} and $S$ is a {\it singular inner function}; 
see \cite[Chapter II]{G}. More explicitly, the factors $B$ and $S$ in this last formula are of the form 
$$B(z)=B_{\{z_j\}}(z):=\prod_j\f{\bar z_j}{|z_j|}\f{z_j-z}{1-\bar z_jz},$$ 
where $\{z_j\}\sb\D$ is a sequence (possibly finite or empty) with $\sum_j(1-|z_j|)<\infty$, and 
$$S(z)=S_\mu(z):=\exp\left\{-\int_\T\f{\ze+z}{\ze-z}d\mu(\ze)\right\},$$
where $\mu$ is a (nonnegative) singular measure on $\T$. 

\par We further remark that if the functions $f_0,\dots,f_n$ from Theorem A are taken 
to be in $H^p_n$, then their Wronskian $W$ is in $H^q$ with a suitable $q$; in case $p\ge1$, 
this is true with $q=p$. (To verify these claims, use \eqref{eqn:diskalg} and \eqref{eqn:hardlit}.) 
The zero set $\mathcal E=W^{-1}(0)$ therefore satisfies the Blaschke 
condition $\sum_{z\in\mathcal E}(1-|z|)<\infty$, and we may rephrase Theorem A as saying that there 
exists a Blaschke product (the one built from $\mathcal E$) with a certain divisibility property. 
Our current purpose is to extend this to singular inner factors; in a sense, such factors can be 
thought of as responsible for the function's \lq\lq boundary zeros of infinite multiplicity". 

\par A unified statement, involving both Blaschke products and singular factors, is given in 
Theorem \ref{thm:genmainsing} below. In particular, it turns out that if the linearly independent functions 
$f_0,\dots,f_n$ are in $H^1_n$, then (much in the spirit of Theorem A) there is a {\it single} singular inner 
function $S$ divisible by the singular factor of each nontrivial linear combination $\sum_{j=0}^n\la_jf_j$. 
This means that the totality of singular factors resulting from such linear combinations is rather poor. 
On the other hand, we shall see that the hypothesis $f_j\in H^1_n$, or at least some smoothness assumption on 
the $f_j$'s, is indispensable; in fact, it is not enough to assume that the functions are merely in $H^\infty$. 

\par Let $I=BS$ be an inner function, where $B$ is a Blaschke product and $S$ is singular, and let $n\in\N$. 
We write $B_{>n}$ for the Blaschke product obtained from $B$ by removing the zeros of multiplicity $\le n$ 
(the remaining zeros, if any, are kept with their multiplicities unchanged); then we put $I_{>n}=B_{>n}S$. 

\begin{thm}\label{thm:genmainsing} Let $f_0,\dots,f_n$ be linearly independent functions in $H^1_n$. Then 
there is an inner function $J$ with the following property: whenever $\la_0,\dots,\la_n$ are complex 
numbers with $\sum_{j=0}^n|\la_j|>0$ and $I$ is the inner factor of $\sum_{j=0}^n\la_jf_j$, the inner 
function $I_{>n}$ divides $J$. 
\end{thm}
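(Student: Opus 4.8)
The plan is to imitate the proof of Theorem A, but to track the inner factors rather than the zeros. First I would set $W:=W(f_0,\dots,f_n)$. Since the $f_j$ lie in $H^1_n$, the remarks preceding the statement show that each $f_j^{(s)}$ (for $0\le s\le n$) lies in some Hardy space, hence $W\in H^q$ for a suitable $q>0$; in particular $W\not\equiv0$ is a genuine $H^q$-function with a well-defined inner factor, which I will call $J_0$. The candidate inner function is then $J:=zJ_0$ or, to be safe about accounting for extra multiplicity, $J:=J_0$ multiplied by a high power of a single Blaschke factor — but most likely $J:=J_0$ already works, and I would check that at the end. The real content is: for every nontrivial linear combination $g=\sum\la_jf_j$ with inner factor $I$, one has $I_{>n}\mid J_0$.

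To prove this, fix $g$ and pick $k$ with $\la_k\ne0$, and form $W_k:=W(f_0,\dots,f_{k-1},g,f_{k+1},\dots,f_n)=\la_kW$, so $W_k$ has the same inner factor $J_0$ as $W$. Expanding $W_k$ along the $k$th column gives $W_k=\sum_{l=0}^n g^{(l)}\Delta_l$, where each cofactor $\Delta_l$ is an $n\times n$ Wronskian-type determinant built from the $f_j$, $j\ne k$, hence lies in a Hardy space too. Now write $g=I\mathcal O$ with $I$ inner and $\mathcal O$ outer. The key observation is that if $I$ has a zero of multiplicity $\ge n+1$ at some $z_0\in\D$, then $g,g',\dots,g^{(n)}$ all vanish at $z_0$ (a genuine $n$-deep interior zero), so every term $g^{(l)}\Delta_l$ vanishes there and $W_k$ has a zero at $z_0$; tracking multiplicities, one gets that $B_{>n}$ (the Blaschke part of $I_{>n}$) divides the Blaschke factor of $W_k$, i.e.\ of $W$. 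For the singular part one wants the analogous divisibility statement for singular inner functions: if $S$ is the singular inner factor of $g$, then $S$ divides the inner factor of $W$. The clean way to see this is the derivative-division lemma: if $f\in H^p$ and $\Theta$ is an inner function dividing the inner factor of $f$, then for the Hardy-Sobolev scale one has that $\Theta^{1-?}$ — more precisely the right statement is that the inner factor of $f^{(l)}$ is divisible by $S_l$, where $S$ factors as a product of $n+1$ or more ``copies'' in a suitable sense. I would instead argue directly: $g^{(l)}=\sum_{j=0}^l\binom{l}{j}I^{(j)}\mathcal O^{(l-j)}$ shows $g^{(l)}/I$ is, after clearing denominators $1/\overline{z}$-type factors, still analytic when $I$ carries enough singular mass; the cleanest rigorous route is Frostman/Khrushchev-type results stating that $g^{(l)}$ has inner factor divisible by $S^{1/?}$ — so I would invoke the known fact (from the theory of $H^p_k$, see \cite{Du}) that if $g\in H^1_n$ has singular inner factor $S$ then $g^{(n)}\in H^{q}$ has singular inner factor divisible by $S$ as well, and similarly for $g^{(l)}$ with $l\le n$, whence every term $g^{(l)}\Delta_l$ has singular factor divisible by $S$, and therefore so does their sum $W_k$, hence $W$.

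Granting those two divisibility facts (the Blaschke part handled by the multiplicity bookkeeping, the singular part by the derivative-of-$H^p_k$-function statement), one concludes that $I_{>n}=B_{>n}S$ divides the inner factor of $W$, which is $J_0$. Since this holds for every nontrivial $g$, the inner function $J:=J_0$ does the job; if a little slack is needed (for instance because expanding along a column can in principle introduce cancellation, or because one wants strict rather than non-strict multiplicity control at finitely many points), replace $J_0$ by $J_0$ times a fixed finite Blaschke product, which changes nothing. I expect the main obstacle to be the rigorous justification that the \emph{singular} inner factor of $g^{(l)}$ is divisible by that of $g$ — the Blaschke side is essentially the elementary ``$n$-deep zeros force $W=0$'' argument from Theorem A applied pointwise with multiplicities, and is routine, whereas the singular-factor statement genuinely uses that $g$ is smooth enough (here $g\in H^1_n$, which via \eqref{eqn:diskalg} and \eqref{eqn:hardlit} puts $g$ and its derivatives in honest Hardy spaces) and is the place where the hypothesis $f_j\in H^1_n$ — and not merely $f_j\in H^\infty$ — is consumed. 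I would isolate that statement as a lemma, proving it either by the Cauchy-integral/Frostman argument or by citing the Hardy--Littlewood-type results in \cite{Du}, and the rest of the proof is then a direct transcription of the Wronskian argument used for Theorems A, \ref{thm:kornontang} and \ref{thm:fullmult}.
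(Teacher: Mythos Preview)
Your overall architecture matches the paper's: form $W=W(f_0,\dots,f_n)$, note $W\not\equiv0$ lies in a Hardy space, replace the $k$th column by $(g,g',\dots,g^{(n)})^T$ to get $W_k=\la_kW$, expand along that column, and handle the Blaschke and singular parts of $I_{>n}$ separately. For the singular part you have the right idea and the right lemma, though the correct statement and reference are: for $h\in H^1_1$, the singular inner factor of $h$ divides that of $h'$ (Caughran \cite{Cau}; Vinogradov--Shirokov \cite{VS1}), applied iteratively to $g,g',\dots,g^{(n-1)}\in H^1_1$; the cofactors $\Delta_l$ lie in $H^1$ (use $f_j^{(s)}\in H^\infty$ for $s\le n-1$ and $f_j^{(n)}\in H^1$), so each term $g^{(l)}\Delta_l$ is an $H^1$-function with singular factor divisible by $S_g$, and hence so is their sum.

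The genuine gap is in the Blaschke bookkeeping. Your claim that $B_{>n}$ divides the Blaschke factor $B_W$ of $W$ is false. If $g$ has a zero of multiplicity $\mu>n$ at $\ze$, then $g^{(l)}$ vanishes there only to order $\mu-l$, so the expansion yields that $W$ vanishes at $\ze$ to order at least $\mu-n$, not $\mu$. Concretely, take $n=1$, $f_0=1$, $f_1=z^2$: then $W=2z$, so $J_0=z$, while $g=z^2$ has $I_{>1}=z^2$, which does not divide $z$. Your proposed fixes do not repair this: multiplying $J_0$ by ``a high power of a single Blaschke factor'' or by ``a fixed finite Blaschke product'' cannot work when $B_W$ has infinitely many zeros, since the deficit of $n$ in multiplicity can occur at each of them. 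The paper's remedy is precisely this missing step: let $\{z_l\}$ be the distinct zeros of $B_W$ with multiplicities $m_l$, form the Blaschke product $\widetilde B$ having each $z_l$ with multiplicity $m_l+n$, and set $J:=\widetilde B\,S_W$. Then $\mu-n\le m_l$ gives $\mu\le m_l+n$, so $b_\ze^\mu$ divides $\widetilde B$, and the argument closes. (Equivalently, $J_0^{\,n+1}$ would also serve, but not $J_0$ itself.)
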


\begin{proof} Because the $f_j$'s are linearly independent, the Wronskian
$$W:=W(f_0,\dots,f_n)$$ 
is non-null. In addition, $W\in H^1$. To verify this last fact, expand the determinant \eqref{eqn:wro} along its 
last row and observe that $f_j^{(n)}\in H^1$ for each $j$, while the derivatives $f_j^{(\nu)}$ with $0\le\nu\le n-1$ are 
all in $H^\infty$. Consider the inner factor $B_WS_W$ of $W$; here $B_W$ is a Blaschke product and $S_W$ is singular. 
Further, let $z_l$ ($l=1,2,\dots$) be the distinct zeros of $B_W$, of respective multiplicities $m_l$. We now 
form a new Blaschke product $\widetilde B$ with the same zero set $\{z_l\}$, this time assigning multiplicity 
$m_l+n$ to $z_l$. Our plan is to check that the inner function $J:=\widetilde BS_W$ has the required property. 

\par Let 
\begin{equation}\label{eqn:ggggg} 
g=\sum_{j=0}^n\la_jf_j
\end{equation}
be a nontrivial linear combination of the $f_j$'s, and let $I=I_g=B_gS_g$ be the inner factor of $g$. (Again, it is 
understood that $B_g$ is a Blaschke product and $S_g$ is singular.) Next, we fix an index $k\in\{0,\dots,n\}$ for 
which the coefficient $\la_k$ in \eqref{eqn:ggggg} is nonzero, and we put 
$$W_k:=W(f_0,\dots,f_{k-1},g,f_{k+1},\dots,f_n).$$ 
Thus, $W_k$ is the determinant obtained from \eqref{eqn:wro} by replacing its $k$th column with 
\begin{equation}\label{eqn:gcol}
\left(g,g',\dots,g^{(n)}\right)^{T},
\end{equation}
and it follows from \eqref{eqn:ggggg} that $W_k=\la_kW$. In particular, the inner factors of $W$ and $W_k$ are 
identical. 

\par Now assume that $g$ has a zero of multiplicity $\mu$, $\mu>n$, at a point $\ze\in\D$ (so that $\ze$ is 
a zero of $I_{>n}$). The derivative $g^{(\nu)}$, with $\nu=1,\dots,n$, will then vanish to order $\mu-\nu$ at $\ze$. 
Therefore, expanding the determinant $W_k$ along its $k$th column \eqref{eqn:gcol}, we see that $W_k$ (and hence $W$, 
as well as $B_W$) has a zero of multiplicity at least $\mu-n$ at $\ze$. Consequently, $\ze$ coincides with 
$z_l$ for some $l$, and $\mu-n\le m_l$. Thus $\mu$ does not exceed $m_l+n$ (which is the multiplicity of 
$z_l$ as a zero of $\widetilde B$), and we deduce that $\widetilde B$ is divisible by $b_\ze^\mu$, where 
$b_\ze(z)=\f{\ze-z}{1-\bar\ze z}$. This shows that the Blaschke factor of $I_{>n}$ divides $\widetilde B$, the 
Blaschke factor of $J$. 

\par Finally, we need to deal with the singular parts of $I_{>n}$ and $J$. Specifically, we must check 
that $S_g$ divides $S_W$. To this end, we notice that the inner factors of $g',g'',\dots,g^{(n)}$ are all 
divisible by $S_g$. (In fact, it is known that for every $h\in H^1_1$, the singular factor of $h$ divides 
that of $h'$; see \cite{Cau} or \cite{VS1}.) Once again, we expand the determinant $W_k$ along its $k$th 
column \eqref{eqn:gcol}, while noting that the corresponding cofactors are in $H^1$, to conclude that 
the singular inner factor of $W_k$ (i.e., $S_W$) is indeed divisible by $S_g$. The proof is now complete. 
\end{proof}

\par When specialized to singular factors, Theorem \ref{thm:genmainsing} takes a simpler form. 

\begin{thm}\label{thm:mainsing} Let $f_0,\dots,f_n$ be linearly independent functions in $H^1_n$. 
Then there is a singular inner function $S$ with the following property: whenever $\la_0,\dots,\la_n$ 
are complex numbers with $\sum_{j=0}^n|\la_j|>0$, the singular factor of $\sum_{j=0}^n\la_jf_j$ divides $S$. 
\end{thm}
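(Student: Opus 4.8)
\textbf{Proof proposal for Theorem \ref{thm:mainsing}.}

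The plan is to deduce this directly from Theorem \ref{thm:genmainsing}, which already does all the real work. First I would invoke Theorem \ref{thm:genmainsing} to produce the inner function $J=\widetilde BS_W$, where (in the notation of that proof) $S_W$ is the singular inner factor of the Wronskian $W=W(f_0,\dots,f_n)$. I claim that the singular inner function $S:=S_W$ has the asserted property. Indeed, fix complex numbers $\la_0,\dots,\la_n$ with $\sum_{j=0}^n|\la_j|>0$, and write $I=BS_g$ for the inner factor of $g:=\sum_{j=0}^n\la_jf_j$, where $B$ is a Blaschke product and $S_g$ is singular. In the course of the proof of Theorem \ref{thm:genmainsing} it was shown, via expansion of the auxiliary Wronskian $W_k=\la_kW$ along the column $(g,g',\dots,g^{(n)})^T$ together with the fact that the singular factor of any $h\in H^1_1$ divides that of $h'$, that $S_g$ divides $S_W$. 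This is precisely the assertion that the singular factor of $g$ divides $S$.

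Strictly speaking, to keep the statement self-contained I would rather re-extract just the singular-factor portion of the earlier argument rather than cite it verbatim: reproduce that $W\not\equiv0$ and $W\in H^1$ (expand the determinant \eqref{eqn:wro} along its last row, using $f_j^{(n)}\in H^1$ and $f_j^{(\nu)}\in H^\infty$ for $0\le\nu\le n-1$), note that $W_k=\la_kW$ has the same singular inner factor $S_W$ as $W$, observe that $S_g$ divides the singular factor of each of $g',\dots,g^{(n)}$ (by the cited results of \cite{Cau, VS1}) as well as of $g$ itself, and finally expand $W_k$ along its $k$th column, whose cofactors lie in $H^1$, to conclude that $S_g\mid S_W$. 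Setting $S:=S_W$ then completes the proof.

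There is essentially no obstacle here: the theorem is a corollary of what has already been proved, and the only point requiring a word of care is making sure the singular factor divisibility for derivatives is applied at the right place — namely that $h\in H^1_1\Rightarrow S_h\mid S_{h'}$ is exactly what licenses passing from $g$ to the column $(g,\dots,g^{(n)})^T$, and that the cofactors being in $H^1$ is what lets us read off the singular factor of the whole determinant. I would present this as a short remark-style proof, perhaps two or three sentences, since spelling out more would merely duplicate the preceding argument.
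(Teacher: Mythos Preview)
Your proposal is correct and matches the paper's approach exactly: the paper gives no separate proof of Theorem~\ref{thm:mainsing}, presenting it simply as the specialization of Theorem~\ref{thm:genmainsing} to singular factors, and your identification $S:=S_W$ together with the observation that the last paragraph of the proof of Theorem~\ref{thm:genmainsing} already establishes $S_g\mid S_W$ is precisely what is intended. Your optional self-contained re-extraction of that paragraph is accurate as well.
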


\par From this, yet another fact will be deduced. But first we cite, as Lemma \ref{lem:powerwro} below, 
a somewhat restricted version of a result from \cite{Z}. With a further application in mind, we state it 
for a generic domain $\Om\subset\C$ rather than for the disk. 

\begin{lem}\label{lem:powerwro} If $f\in\mathcal H(\Om)$, then 
$$W(1,f,f^2,\dots,f^n)=c_n\left[f'\right]^{n(n+1)/2},$$
where $c_n=\prod_{k=1}^nk!$. 
\end{lem}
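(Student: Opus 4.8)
The plan is to prove Lemma~\ref{lem:powerwro} by a direct computation of the Wronskian determinant, exploiting the special structure of the system $1, f, f^2, \dots, f^n$. The key observation is that the $k$-th derivative of $f^j$ is a polynomial in $f$, $f'$, $f''$, \dots, and in particular $(f^j)^{(k)}$ has the form $j(j-1)\cdots(j-k+1)\,f^{j-k}(f')^k + (\text{terms with higher-order derivatives of }f\text{ and lower powers of }f')$. This suggests that the \emph{leading behaviour}, as far as powers of $f'$ are concerned, is governed by a Vandermonde-type determinant, and that all the \lq\lq correction" terms cancel.

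The cleanest route, which I would carry out, is as follows. First I would set $f' =: u$ and argue that $W(1, f, \dots, f^n)$ is a polynomial (with constant coefficients) in $f, u, u', u'', \dots$; since the left-hand side must turn out to be $c_n u^{n(n+1)/2}$, a function of $u$ alone, it is natural to make a reduction to the case where $f$ is chosen as simply as possible. The slickest such choice is $f(z) = z$ on $\Om$ (or on a subdomain; the identity is then an algebraic identity among the functions and their derivatives, so it propagates to arbitrary $f$ by substitution). Wait --- that does not immediately work, because replacing $f$ by $z$ kills all higher derivatives and cannot detect the $u$-dependence. So instead I would take $f$ with $f'$ nonvanishing and substitute; the point is that $W(1, f, f^2, \dots, f^n)$, viewed as a differential polynomial in $f$, is \emph{homogeneous of weight} $n(n+1)/2$ if we assign weight $k$ to $f^{(k)}$ and weight $1$ to $f$ itself --- hmm, this bookkeeping needs care.

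Let me describe the argument I actually expect to use. I would proceed by induction on $n$, or alternatively by the following factorization trick: perform column operations. Subtract suitable multiples to reduce, or better, use the known Wronskian identity
$$W(g_0 h, g_1 h, \dots, g_n h) = h^{n+1} W(g_0, \dots, g_n)$$
together with
$$W(g_0, g_1, \dots, g_n) = g_0^{n+1}\, W\!\left(\left(\tfrac{g_1}{g_0}\right)', \dots, \left(\tfrac{g_n}{g_0}\right)'\right)$$
(the standard reduction-of-order formula for Wronskians). Applying the latter with $g_j = f^j$ gives $W(1, f, f^2, \dots, f^n) = W((f)', (f^2)', \dots, (f^n)') = W(f', 2ff', \dots, nf^{n-1}f')$. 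Now factor $f'$ out of each of the $n$ columns using the first identity above (with $n$ replaced by $n-1$ and the roles transposed): this yields $(f')^n \, W(1, 2f, 3f^2, \dots, nf^{n-1})$. The constants $1, 2, \dots, n$ can be pulled out of the columns of the determinant (multilinearity), giving $n!\,(f')^n\,W(1, f, f^2, \dots, f^{n-1})$. Hence
$$W(1, f, \dots, f^n) = n!\,(f')^n\, W(1, f, \dots, f^{n-1}),$$
and iterating down to $n=0$ (where $W(1) = 1$) gives $W(1, f, \dots, f^n) = \prod_{k=1}^n k!\,(f')^{k} \cdot$ --- and $\sum_{k=1}^n k = n(n+1)/2$, so we get exactly $c_n (f')^{n(n+1)/2}$ with $c_n = \prod_{k=1}^n k!$.

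The main obstacle is justifying the two Wronskian identities cleanly, especially the reduction-of-order formula $W(g_0, \dots, g_n) = g_0^{n+1} W((g_1/g_0)', \dots, (g_n/g_0)')$, which in the form I need requires $g_0 = 1$ (the simplest case, where it reads $W(1, g_1, \dots, g_n) = W(g_1', \dots, g_n')$ --- immediate by expanding along the first row / first column and recognizing the minor) and then the factoring-out identity $W(\varphi_1 h, \dots, \varphi_n h) = h^n W(\varphi_1, \dots, \varphi_n)$ applied with $\varphi_j = j f^{j-1}$ and $h = f'$. Both are standard and can be cited or dispatched in a line; the only genuine care needed is that $f'$ may have zeros, but since the claimed identity is an identity between holomorphic functions on $\Om$, it suffices to verify it on the open set $\{f' \neq 0\}$ (or where $f'$ is locally nonvanishing) and invoke analytic continuation, \emph{provided} $f' \not\equiv 0$; if $f' \equiv 0$ then $f$ is constant, $1, f, \dots, f^n$ are linearly dependent, and both sides vanish, so the identity holds trivially.
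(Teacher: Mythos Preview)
The paper does not prove this lemma at all; it merely cites it as ``a somewhat restricted version of a result from \cite{Z}'' (Zeitlin, Amer.\ Math.\ Monthly, 1958). So there is no proof in the paper to compare against.

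Your eventual argument is correct and self-contained. The recursion
\[
W(1,f,\dots,f^n)=n!\,(f')^{n}\,W(1,f,\dots,f^{n-1})
\]
follows exactly as you describe, from the two standard identities
\[
W(1,g_1,\dots,g_n)=W(g_1',\dots,g_n')
\qquad\text{and}\qquad
W(h\varphi_1,\dots,h\varphi_n)=h^{\,n}\,W(\varphi_1,\dots,\varphi_n).
\]
The first is immediate by expanding along the first column; the second holds because the Wronskian matrix of the $h\varphi_j$ factors as $L\cdot N$, where $N$ is the Wronskian matrix of the $\varphi_j$ and $L$ is lower triangular with every diagonal entry equal to $h$ (Leibniz's rule). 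Iterating the recursion gives the claimed formula with $c_n=\prod_{k=1}^n k!$ and exponent $\sum_{k=1}^n k=n(n+1)/2$.

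One small comment: the second identity above is a pointwise determinant factorization involving no division, so your caveat about possible zeros of $f'$ and analytic continuation is unnecessary---the recursion holds at every point of $\Om$, regardless of whether $f'$ vanishes there. The presentation would be cleaner if you dropped the exploratory false starts (the weight/homogeneity considerations and the $f(z)=z$ attempt) and went straight to the recursion.
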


\par To derive the next corollary, it suffices to apply Theorem \ref{thm:mainsing} to the case where the 
$f_j$'s are powers of a single function. In doing so, one should take the function $S$ in that theorem to 
be the singular factor of the Wronskian (in accordance with the preceding proof) and combine this with 
Lemma \ref{lem:powerwro}. 

\begin{cor}\label{cor:powersing} Let $f$ be a nonconstant function in $H^1_n$, and let $\mathbf S$ be the 
singular inner factor of $f'$. Then $\mathbf S^{n(n+1)/2}$ is divisible by the singular inner factor of 
every linear combination $\sum_{k=0}^n\la_kf^k$ with $\sum_{k=0}^n|\la_k|>0$. In particular, if $f'$ has no 
singular factor, then the same is true for each of the linear combinations in question. 
\end{cor}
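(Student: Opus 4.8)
The plan is to invoke Theorem~\ref{thm:mainsing} with a judicious choice of the $n+1$ linearly independent functions, namely $f_0=1$, $f_1=f$, $f_2=f^2,\dots,f_n=f^n$, where $f$ is the given nonconstant element of $H^1_n$. First I would check that these functions are genuinely admissible: they are linearly independent (as $f$ is nonconstant), and each power $f^k$ lies in $H^1_n$. The latter is a routine consequence of the embeddings recorded just before Theorem~\ref{thm:genmainsing} --- one differentiates $f^k$ via the Leibniz/Fa\`a di Bruno rule, expressing $(f^k)^{(n)}$ as a sum of products of derivatives $f^{(j)}$ with $j\le n$, and uses that $f^{(n)}\in H^1$ while the lower-order derivatives $f,f',\dots,f^{(n-1)}$ are all in $H^\infty$ (indeed $f\in H^1_n\subset H^1_1\subset H^\infty$, and similarly for the intermediate derivatives); a product of one $H^1$ factor with several $H^\infty$ factors is again in $H^1$.

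Granting this, Theorem~\ref{thm:mainsing} furnishes a singular inner function $S$ such that the singular factor of every nontrivial linear combination $\sum_{k=0}^n\la_k f^k$ divides $S$. The key point --- already signalled in the sentence preceding the corollary --- is that the proof of Theorem~\ref{thm:mainsing} (via Theorem~\ref{thm:genmainsing}) lets us take $S$ to be precisely the singular inner factor $S_W$ of the Wronskian $W=W(1,f,f^2,\dots,f^n)$. At this stage I would bring in Lemma~\ref{lem:powerwro}, which identifies this Wronskian explicitly as $W=c_n\,[f']^{n(n+1)/2}$ with $c_n=\prod_{k=1}^n k!\ne0$. Since the singular inner factor of a nonzero constant multiple of $[f']^{n(n+1)/2}$ is exactly $\mathbf S^{n(n+1)/2}$, where $\mathbf S$ denotes the singular factor of $f'$, we conclude that $S=\mathbf S^{n(n+1)/2}$ works. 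Hence the singular factor of every $\sum_{k=0}^n\la_k f^k$ divides $\mathbf S^{n(n+1)/2}$, which is the first assertion.

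The final sentence of the corollary is then immediate: if $f'$ has no singular inner factor, then $\mathbf S$ is trivial, so $\mathbf S^{n(n+1)/2}$ is trivial, and a divisor of a trivial inner function is trivial; thus each linear combination $\sum_{k=0}^n\la_k f^k$ has no singular factor either.

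I do not anticipate a serious obstacle here, since the corollary is essentially a specialization plus a Wronskian computation that is handed to us by Lemma~\ref{lem:powerwro}. The one point that needs a little care --- and is the closest thing to a ``main obstacle'' --- is verifying that $f^k\in H^1_n$ for $k=1,\dots,n$, i.e.\ that the Hardy--Sobolev class $H^1_n$ is closed under taking powers. This is where the Leibniz expansion of $(f^k)^{(n)}$ together with the embeddings \eqref{eqn:diskalg} and the continuity-up-to-$\T$ of $H^1_1$ functions (applied also to $f',\dots,f^{(n-1)}$, which lie in $H^1_{n-1},\dots,H^1_1$ respectively) does all the work; one simply has to make sure every summand is a product of at most one $H^1$ factor and several bounded ones.
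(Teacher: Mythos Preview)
Your proposal is correct and follows essentially the same route as the paper: apply Theorem~\ref{thm:mainsing} to the powers $1,f,\dots,f^n$, identify the $S$ from its proof with the singular factor of $W(1,f,\dots,f^n)$, and use Lemma~\ref{lem:powerwro} to see that this Wronskian equals $c_n[f']^{n(n+1)/2}$. The only addition on your part is the explicit verification that $f^k\in H^1_n$ and that $1,f,\dots,f^n$ are linearly independent, which the paper leaves implicit; your Leibniz argument (at most one $f^{(n)}$ factor, all lower-order derivatives in $H^\infty$ via $H^1_1\subset H^\infty$) is the right way to fill that gap.
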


\par Finally, we show that Theorem \ref{thm:mainsing} (and hence also Theorem \ref{thm:genmainsing}) becomes 
false, already for $n=1$, if we replace $H^1_n$ by $H^\infty$. 

\begin{prop}\label{prop:hinftyfalse} There are functions $f_0,f_1\in H^\infty$ with the following 
property: for each singular inner function $S$ there is a nontrivial linear combination $\la_0f_0+\la_1f_1$ 
whose singular factor does not divide $S$. 
\end{prop}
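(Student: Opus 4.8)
The plan is to build $f_0$ and $f_1$ so that the quotient $f_1/f_0$ realizes, as its singular factor varies over the relevant linear combinations, all possible singular inner functions of a sufficiently rich family. The cleanest way to do this is to take $f_0$ to be a fixed singular inner function with \emph{very spread-out} singular measure and $f_1=f_0\cdot\psi$ for a suitable bounded analytic $\psi$, so that $\la_0 f_0+\la_1 f_1=f_0(\la_0+\la_1\psi)$ and the singular factor of the combination equals that of $f_0$ times the singular factor of $\la_0+\la_1\psi$. The problem thus reduces to exhibiting a single $\psi\in H^\infty$ such that the functions $\la_0+\la_1\psi$, as $(\la_0:\la_1)$ ranges over $\Pb^1$, have singular factors that are \emph{not} all divisible by one fixed singular inner function.

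First I would recall the classical fact that there exist $\psi\in H^\infty$, indeed inner $\psi$, whose "level sets" $\{\psi=w\}$ carry nontrivial singular inner functions for a dense (in fact full-measure) set of $w$ in $\D$ or on $\T$: for instance, an interpolating-type or thin Blaschke product, or better, a singular inner function $\psi$ itself, for which $\psi-w$ has a nontrivial singular factor for every $w$ in the open unit disk lying in a suitable range. Concretely, one can let $\psi$ be a singular inner function associated with a measure $\nu$ whose closed support is all of $\T$; then for $|w|<1$ the function $\psi-w$ is outer-times-inner, and by choosing $\nu$ so that $\psi$ omits no value and the boundary behavior is wild enough, the functions $\psi-w$ can be arranged to have singular factors $S_w$ with mutually "incomparable" carriers. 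The key point to extract is: the family $\{S_w:|w|<1\}$ of singular factors is not dominated by any single singular inner function $S$, i.e.\ for every singular $S$ there is some $w$ with $S_w\nmid S$.

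With such a $\psi$ in hand, set $f_0:=\Phi$ and $f_1:=\Phi\psi$, where $\Phi$ is \emph{any} singular inner function (e.g.\ $\Phi=1$ is not allowed since we need both $f_j$ genuinely in $H^\infty$ and the construction is cleanest with $\Phi$ a fixed singular inner function, but in fact $\Phi\equiv1$ works too if we simply take $f_0=1$, $f_1=\psi$). Then $f_0,f_1\in H^\infty$ are linearly independent, and $\la_0f_0+\la_1f_1=\Phi(\la_0+\la_1\psi)$; writing $\la_0+\la_1\psi=\la_1(\psi+\la_0/\la_1)$ when $\la_1\ne0$, the singular factor of this combination is $S_\Phi\cdot S_{-\la_0/\la_1}$ in the notation above, while for $\la_1=0$ it is just $S_\Phi$. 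Given any prescribed singular inner function $S$, choose $w=-\la_0/\la_1$ with $S_w\nmid S$ (possible by the previous paragraph); then the singular factor of $\la_0f_0+\la_1f_1$ is divisible by $S_w$ and hence does not divide $S$. This is exactly the assertion of the proposition.

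\textbf{Main obstacle.} The delicate point is the construction of $\psi$ in the second paragraph: one must exhibit a bounded (preferably inner) function whose translates $\psi-w$ have singular factors that are collectively not dominated by any single singular inner function. I expect to handle this by taking $\psi$ to be a carefully chosen singular inner function (or a Blaschke product) and analyzing the singular parts of $\psi-w$ via the associated pull-back measures; the required incomparability should follow by arranging the carriers of these singular measures to be, say, pairwise disjoint Carleson-type sets indexed by a parameter, so that no countable "sup" of them is again singular — or, more simply, by noting that a single $S$ has a carrier of measure zero, while the union of carriers of the $S_w$ can be forced to have positive (even full) measure. Making this last mechanism precise — ensuring that $\bigcup_w (\text{carrier of }S_w)$ is not contained in any single measure-zero set — is the crux, and it is the step I would spend the most care on; everything else is bookkeeping with the canonical factorization.
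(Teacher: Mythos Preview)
Your overall framework is exactly the paper's: take $f_0=1$, $f_1=\psi$ with $\psi$ inner, and arrange that the singular factors $S_w$ of $\psi-w$ cannot all divide a single singular inner function. Where you diverge from the paper is in the construction of $\psi$, which you leave as the acknowledged ``main obstacle,'' and your tentative suggestions for it are off target.

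First, a sign error in your heuristics: you write ``choosing $\nu$ so that $\psi$ omits no value.'' This is backwards. If $\psi$ takes the value $w$, then $(\psi-w)/(1-\bar w\psi)$ has zeros and need not be singular at all. What you want is that $\psi$ \emph{omits} $w$, so that the Frostman shift $(\psi-w)/(1-\bar w\psi)$ is zero-free inner, i.e.\ a nontrivial singular inner function $S_w$; since $1-\bar w\psi$ is outer, $S_w$ is then precisely the singular factor of $\psi-w$. The paper's $\psi$ is a nonconstant inner function $\th$ that omits an \emph{uncountable} set $\mathcal A\subset\D$ (such $\th$ exist: one can map $\D$ to $\D\setminus\mathcal A$ for any closed $\mathcal A$ of zero logarithmic capacity).

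Second, your proposed mechanism for non-domination (``the union of carriers has positive, even full, measure'') does not work: the closed support of a singular measure can be all of $\T$, so a large union of supports does not by itself prevent domination. The paper's mechanism is cleaner and purely measure-theoretic: for $\al\in\mathcal A$, $\th$ has nontangential limit $\al$ at $\mu_\al$-a.e.\ point of $\T$ (where $\mu_\al$ is the singular measure of $S_\al$), so for distinct $\al$ the measures $\mu_\al$ live on pairwise disjoint Borel sets. Since $\mathcal A$ is uncountable and each $\mu_\al$ is nonzero, no finite Borel measure $\mu$ can satisfy $\mu\ge\mu_\al$ for all $\al\in\mathcal A$; hence no singular inner function $S$ is divisible by every $S_\al$. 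That is the entire missing step.
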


\begin{proof} We borrow an idea from \cite{CS}. Let $\th$ be a nonconstant inner function that omits 
an uncountable set of values $\mathcal A\subset\D$. (The existence of such a function with values 
in $\D\setminus\mathcal A$, for any prescribed closed set $\mathcal A$ of zero logarithmic capacity, 
is established in \cite[Chapter 2]{CL}.) For each $\al\in\mathcal A$, one has 
\begin{equation}\label{eqn:tetal}
\th-\al=S_\al\cdot(1-\bar\al\th),
\end{equation}
with 
$$S_\al:=\f{\th-\al}{1-\bar\al\th}.$$ 
Here, $S_\al$ is a singular inner function (because $\al$ is not in the range of $\th$), while the other factor 
in \eqref{eqn:tetal} is outer. 
\par Write $\mu_\al$ for the singular measure associated with $S_\al$. For $\mu_\al$-almost every $\ze\in\T$, 
we have $S_\al(z)\to0$, and hence $\th(z)\to\al$, as $z\to\ze$ nontangentially; see \cite[Chapter II]{G}. It 
follows that the supports of $\mu_\al$'s, with $\al\in\mathcal A$, are pairwise disjoint. The set $\mathcal A$ being 
uncountable and the measures $\mu_\al$ nonzero, we readily deduce that no finite Borel measure $\mu$ on $\T$ can satisfy 
$\mu\ge\mu_\al$ for all $\al\in\mathcal A$. Consequently, no singular inner function is divisible by every $S_\al$. 
\par This said, we put $f_0:=\th$ and $f_1:=1$. Since $S_\al$ is the singular factor of $\th-\al$, which is 
a linear combination of $f_0$ and $f_1$, we are done. 
\end{proof}

\par It would be interesting to know if the space $H^1_n$ in Theorems \ref{thm:genmainsing} and \ref{thm:mainsing} 
can be replaced by a larger smoothness class (say, by $H^p_n$ with a $p<1$), and moreover, to find the optimal 
smoothness conditions on the functions $f_j$ that guarantee the validity of those results. 

\section{Zero sets of Wronskians} 

In this section, we discuss the exceptional sets $\mathcal E$ that may arise in Theorem A (see Introduction) 
when the functions $f_0,\dots,f_n$ are assumed to lie in a specific space $X$ of holomorphic functions 
on a domain $\Om\subset\C$. For the time being, we prefer to deal with a general domain, not necessarily 
with the disk. Given a space $X\subset\mathcal H(\Om)$, we now introduce the appropriate concepts and notations. 

\medskip\noindent\textbf{Definition.} (a) Let $f_0,\dots,f_n$ be linearly independent functions in $\mathcal H(\Om)$. 
We write $\mathcal Z(f_0,\dots,f_n)$ for the set of points $z\in\Om$ with the following property: there exist complex 
numbers $\la_0=\la_0(z),\,\dots,\,\la_n=\la_n(z)$ with $\sum_{j=0}^n|\la_j|>0$ such that the function 
$\sum_{j=0}^n\la_jf_j$ has an $n$-deep zero at $z$. 
\par (b) We denote by $\mathcal Z_n(X)$ the collection of those sets $E\subset\Om$ which can be written as 
$E=\mathcal Z(f_0,\dots,f_n)$ for some linearly independent functions $f_0,\dots,f_n$ in $X$. 

\medskip The proof of Theorem A from the Introduction shows that $\mathcal Z(f_0,\dots,f_n)$ is no other than 
the zero set of $W(f_0,\dots,f_n)$. We further remark that, when $n=0$, $\mathcal Z(f_0)$ is just the zero set 
of $f_0$, while $\mathcal Z_0(X)$ is the class of zero sets for $X$. 
\par In what follows, we write $\mathcal Z(X)$ for $\mathcal Z_0(X)$; a (discrete) set $E$ in $\Om$ will thus 
belong to $\mathcal Z(X)$ if and only if it coincides with $\mathcal Z(g):=g^{-1}(0)$ for some 
non-null function $g\in X$. Also, we put 
\begin{equation}\label{eqn:xprime}
X':=\{f':\,f\in X\}.
\end{equation}

\begin{thm}\label{thm:zezeze} Let $X\subset\mathcal H(\Om)$ and $n\in\N$. Assume, in addition, that $X$ is an algebra (with 
respect to the usual pointwise multiplication of functions) that contains the constant function $1$ and satisfies $X'=X$. 
Then $\mathcal Z_n(X)=\mathcal Z(X)$. 
\end{thm}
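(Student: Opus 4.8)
The plan is to establish the two inclusions $\mathcal Z(X)\subset\mathcal Z_n(X)$ and $\mathcal Z_n(X)\subset\mathcal Z(X)$ separately, exploiting the algebra structure and the identity $X'=X$, together with the fact (from the proof of Theorem A) that $\mathcal Z(f_0,\dots,f_n)=\mathcal Z(W(f_0,\dots,f_n))$.

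For the inclusion $\mathcal Z(X)\subset\mathcal Z_n(X)$, I would start with a set $E=\mathcal Z(g)$ for some non-null $g\in X$ and try to exhibit $n+1$ linearly independent functions $f_0,\dots,f_n$ in $X$ whose Wronskian has exactly $E$ as its zero set. The natural guess, in view of Lemma \ref{lem:powerwro}, is to take a function $h\in X$ with $h'=g$ (available since $X'=X$, hence every element of $X$ has a primitive in $X$ — one must be slightly careful that the primitive can be chosen in $X$, but $X'=X$ gives exactly this) and set $f_j=h^j$ for $j=0,\dots,n$; these lie in $X$ because $X$ is an algebra containing $1$. Lemma \ref{lem:powerwro} then yields $W(1,h,\dots,h^n)=c_n(h')^{n(n+1)/2}=c_n g^{n(n+1)/2}$, whose zero set is precisely $\mathcal Z(g)=E$. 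One also needs the $h^j$ to be linearly independent, which holds as soon as $h$ is nonconstant; if $g\equiv 0$ is excluded and $E=\emptyset$ corresponds to a nonvanishing $g$, the primitive $h$ is nonconstant, so this is fine, and the degenerate cases ($g$ a nonzero constant, etc.) can be checked directly.

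For the reverse inclusion $\mathcal Z_n(X)\subset\mathcal Z(X)$, I would take $E=\mathcal Z(f_0,\dots,f_n)$ with $f_j\in X$ linearly independent, and recall that $E=\mathcal Z(W)$ where $W=W(f_0,\dots,f_n)$. The whole point is then to show $W\in X$: expanding the determinant \eqref{eqn:wro}, $W$ is a polynomial (with integer coefficients) in the entries $f_j^{(k)}$, each of which lies in $X$ because $X'=X$ applied $k$ times gives $f_j^{(k)}\in X$; since $X$ is an algebra, any such polynomial expression in elements of $X$ is again in $X$, so $W\in X$. As $W\not\equiv 0$ (linear independence, via \cite[Chapter 1]{La}), we get $E=\mathcal Z(W)\in\mathcal Z(X)$.

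The main obstacle is really a bookkeeping point rather than a deep one: making sure the hypothesis $X'=X$ is used in both directions in the right form — as "$X$ is closed under differentiation" for the $\mathcal Z_n(X)\subset\mathcal Z(X)$ half, and as "every $f\in X$ admits a primitive in $X$" for the $\mathcal Z(X)\subset\mathcal Z_n(X)$ half (the latter being precisely the surjectivity half of $X'=X$). One should also dispatch the trivial/degenerate cases (e.g.\ $n$-deep zeros when some $f_j$ is constant, or $E=\emptyset$) and confirm that linear independence of $1,h,\dots,h^n$ is automatic once $h$ is nonconstant, which in turn follows from $g=h'\not\equiv 0$.
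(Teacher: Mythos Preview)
Your proposal is correct and follows essentially the same route as the paper: for $\mathcal Z_n(X)\subset\mathcal Z(X)$ you show $W(f_0,\dots,f_n)\in X$ using that $X$ is a differentiation-closed algebra, and for $\mathcal Z(X)\subset\mathcal Z_n(X)$ you pick a primitive $h\in X$ of $g$ and invoke Lemma~\ref{lem:powerwro} on $1,h,\dots,h^n$. The paper's proof is the same, only terser; it does not pause over the linear independence of $1,h,\dots,h^n$, but this is immediate since the Wronskian $c_n g^{n(n+1)/2}$ is non-null.
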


\begin{proof} Let $E\in\mathcal Z_n(X)$, so that $E$ is the zero set of $W:=W(f_0,\dots,f_n)$ for some linearly independent 
functions $f_0,\dots,f_n\in X$. Our assumptions on $X$ imply that $W\in X$ (because all the entries of the Wronskian matrix 
are in $X$), and so $E\in\mathcal Z(X)$. This proves the inclusion $\mathcal Z_n(X)\subset\mathcal Z(X)$. 
\par Conversely, suppose $E\in\mathcal Z(X)$, so that $E=g^{-1}(0)$ for some non-null function $g\in X$. We then write 
$g=f'$ for a suitable $f\in X$ and invoke Lemma \ref{lem:powerwro} to get 
$$W(1,f,\dots,f^n)=c_ng^{n(n+1)/2}.$$ 
This last Wronskian (which is built from the functions $f^k$ lying in $X$) vanishes precisely on $E$, so 
$E\in\mathcal Z_n(X)$; this shows that $\mathcal Z(X)\subset\mathcal Z_n(X)$. 
\end{proof}

As examples of algebras $X$ satisfying the hypotheses of Theorem \ref{thm:zezeze}, with $\Om=\D$, we mention 
$A^\infty:=\bigcap_{\al>0}\Al$ and $A^{-\infty}:=\bigcup_{\be>0}A^{-\be}$. In the former case, the family $\mathcal Z(X)$ 
is formed by the closed (BC)-sets (see Section 3 above), while in the latter case it is characterized by Korenblum's density 
condition (see \cite{K}). To give yet another example, this time with $\Om=\C$, fix a number $\rho\in(0,\infty)$ and 
take $X$ to be the space of entire functions of order at most $\rho$ and of finite type. For a description of $\mathcal Z(X)$ 
in this last example, we refer to \cite[Chapter I]{Le}. 
\par For the rest of the paper, we go back to the case $\Om=\D$. 

\begin{thm}\label{thm:hpm} Let $m$ and $n$ be nonnegative integers, and let $0<p<\infty$. 
\par{\rm (i)} In order that every set $E$ in $\mathcal Z_n(H^p_m)$ satisfy the Blaschke condition 
$$\sum_{z\in E}(1-|z|)<\infty,$$ 
it is necessary and sufficient that $m\ge n$. 
\par{\rm (ii)} In order that every set in $\mathcal Z_n(H^p_m)$ be a (BC)-set, it is necessary and sufficient 
that $m\ge n+p^{-1}$. 
\end{thm}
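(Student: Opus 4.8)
\textbf{Proof proposal for Theorem \ref{thm:hpm}.}

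The plan is to prove the two statements in parallel, treating necessity and sufficiency separately in each case, and exploiting the fact (already recorded in the proof of Theorem A and in Theorem \ref{thm:zezeze}) that a set $E$ lies in $\mathcal Z_n(H^p_m)$ precisely when $E$ is the zero set of a Wronskian $W=W(f_0,\dots,f_n)$ built from linearly independent $f_j\in H^p_m$. So the whole theorem reduces to understanding the membership and the zero sets of such Wronskians.

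For the sufficiency in (i) and (ii), I would expand the determinant \eqref{eqn:wro} along its last row: $W=\sum_{j=0}^n f_j^{(n)}\,\Delta_j$, where each cofactor $\Delta_j$ is a polynomial in the derivatives $f_i^{(\nu)}$ with $\nu\le n-1$. Under the hypothesis $m\ge n$, each $f_j\in H^p_m\subset H^p_n$, so $f_j^{(n)}\in H^p$; and the lower-order derivatives $f_i^{(\nu)}$ ($\nu\le n-1$) belong to $H^p_{m-\nu}\subset H^p_{m-n+1}\subset H^p_1$, which by the Hardy--Littlewood embeddings \eqref{eqn:diskalg} and \eqref{eqn:hardlit} is contained in some $H^q$ (indeed in $H^\infty$ when $m>n$, and in $H^{p/(1-p)}$ or $H^\infty$ when $m=n$). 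Hence $W$ lies in some Hardy space $H^r$ with $r>0$, so $W\not\equiv0$ forces $W^{-1}(0)\cap\D$ to satisfy the Blaschke condition; this proves the sufficiency half of (i). For (ii), when $m\ge n+p^{-1}$ I would show more: then $f_j\in H^p_m$ with $m-n\ge p^{-1}$ implies (again iterating the Hardy--Littlewood theorems, or directly, since $H^p_k\subset A^{k-1/p}$ for $k>1/p$) that $f_j$ and all derivatives up to order $n$ are continuous up to $\T$, in fact each $f_j\in A^{m-1/p}\subset A^{n}$ with $m-1/p\ge n$; consequently $W\in A^{(m-1/p)-n}\subset A^0$, so $W$ extends continuously to $\ov\D$, hence $\log\dist(\ze,W^{-1}(0))$ is bounded below by $\log|W(\ze)|$ pointwise and the Carleson condition \eqref{eqn:carlcond} holds because $W\in A^\alpha$ for some $\alpha\ge0$ has a logarithmically integrable boundary modulus (one may quote the Taylor--Williams characterization, or the standard fact that boundary zero sets of nonzero $A^\alpha$ functions are Carleson sets). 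Thus every $E\in\mathcal Z_n(H^p_m)$ is a (BC)-set.

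For the necessity, I would exhibit counterexamples when the inequalities fail, and here Lemma \ref{lem:powerwro} is the decisive tool: given any non-null $g\in H^p_{m}$ of the appropriate kind, the functions $1,f,f^2,\dots,f^n$ (where $f'=g$, hence $f\in H^p_{m+1}\subset H^p_m$) have Wronskian $c_n g^{n(n+1)/2}$, whose zero set equals $g^{-1}(0)$. Therefore $\mathcal Z(H^p_{m+1})\subset\mathcal Z_n(H^p_m)$ — more precisely, for the necessity direction I need to manufacture, when $m\le n-1$, a function in $H^p_{m+1}$ (equivalently, $g\in H^p_m$) whose zero set violates the Blaschke condition; and when $m< n+p^{-1}$, a $g\in H^p_m$ whose zero set fails the Carleson condition. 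For (i): since $m+1\le n$, the space $H^p_{m+1}$ is no smaller than $H^p_n$... wait, this goes the wrong way, so instead I would build $g$ directly in $H^p_m$ with $m\le n-1$: for instance, take a Blaschke product $B$ whose zeros are so sparse that $B$ lies in $H^p_m$ yet whose derivative's $n(n+1)/2$-th power still has a zero set (namely the zeros of $B$ counted appropriately) that is non-Blaschke — actually the cleanest route is to take $g\in H^p_m$ vanishing on a prescribed sequence: use a classical construction (e.g. an interpolating-Blaschke-product times an outer function, or a lacunary-type power series) producing $g\in H^p_m$ whose zero set $Z$ satisfies $\sum_{z\in Z}(1-|z|)=\infty$, which is possible precisely when $m\le n-1$ because the relevant zero-set theorem for $H^p_m$ (due to Hardy--Littlewood / Caughran, cf. the references on $H^1_1$) permits non-Blaschke zero sets in that range. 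For (ii): when $m\le n$ use again a function $g\in H^p_m$ (with $m\le n$) whose boundary zero set is a closed set $E\subset\T$ of positive Lebesgue measure — impossible if $g\in A^0$ but possible for suitable $m\le n$ combined with $p$; and when $n<m<n+p^{-1}$, use the sharpness of the embedding $H^p_{m-n}\not\subset A^0$ (equivalently $H^p_k\not\subset A^0$ for $1\le k<1/p$... here $k=m-n<1/p$) to produce $g\in H^p_{m-n}$, hence $f$ with $f^{(n)}=g$... let me just say: produce $g\in H^p_{m}$ with $g^{n(n+1)/2}$ having a boundary zero set that is not a Carleson set, drawing on the known description of zero sets of $H^p_k$-functions for $1/p$-subcritical $k$.

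I expect the necessity in part (ii) in the delicate range $n<m<n+p^{-1}$ to be the main obstacle: there one must produce a function in $H^p_{m-n}$ (a space that just fails to embed into the disk algebra, since $m-n<1/p$) whose zero set — or rather the zero set of its $n(n+1)/2$-th power, which has the same support — is a closed subset of $\D\cup\T$ failing the Carleson condition \eqref{eqn:carlcond}, while simultaneously the Blaschke condition inside $\D$ is respected (so that (i) does not already rule it out when $m\ge n$). This forces the bad behaviour onto $\T$, and constructing such a boundary set requires invoking, and checking carefully, the precise zero-set theorems for Hardy--Sobolev spaces below the critical smoothness; I would cite the relevant results from \cite{Du} and the literature on $H^1_1$ rather than reprove them, but pinning down the exponents so that the threshold $m\ge n+p^{-1}$ comes out exactly is where the real work lies.
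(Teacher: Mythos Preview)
Your sufficiency argument for (i) is fine, but for (ii) it breaks at the endpoint. When $m=n+p^{-1}$ with $p^{-1}$ an integer (e.g.\ $p=1$, $m=n+1$), the embedding $H^p_m\subset A^{m-1/p}$ only places the $f_j$ in $A^n$, so $W\in A^0$, the Bloch space; Bloch functions need not extend continuously to $\T$ and their zero sets need not be (BC)-sets. The paper handles this by showing instead that all entries of the Wronskian matrix lie in $H^1_1$ (via Lemma~\ref{lem:embed}), that $H^1_1$ is an algebra, and then invoking the nontrivial Vinogradov--Shirokov theorem (Lemma~\ref{lem:honeone}) that zero sets of $H^1_1$ functions are (BC)-sets.

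Your necessity arguments have a more basic problem. The route through Lemma~\ref{lem:powerwro} requires $1,f,f^2,\dots,f^n\in H^p_m$, but $H^p_m$ is not an algebra, so the powers $f^k$ need not lie there. More seriously, you then try to find $g\in H^p_m$ (with $m\le n-1$) whose zero set is non-Blaschke---this is impossible, since $H^p_m\subset H^p$ and every $H^p$ function has a Blaschke zero set. The paper's device is different: it uses the elementary identity
\[
W\!\left(1,\tfrac{z}{1!},\dots,\tfrac{z^{n-1}}{(n-1)!},f\right)=f^{(n)},
\]
where the first $n$ functions are polynomials (trivially in $H^p_m$) and only $f\in H^p_m$ is needed. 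The Wronskian is then $f^{(n)}$, which is \emph{not} required to lie in any Hardy space when $m<n$; one chooses $f$ so that $f^{(m)}\in H^p$ while $f^{(n)}\in (H^p)^{(n-m)}$ has a non-Blaschke zero set (Lemma~\ref{lem:nonbla}, via the Bloch space). For (ii) in the range $n\le m<n+p^{-1}$, the same identity is used with $f^{(n)}=B$ a Blaschke product whose zeros $\{a_j\}$ satisfy $|a_j|=1-2^{-j}$ and accumulate on all of $\T$; Linden's theorem (Lemma~\ref{lem:linden}, Corollary~\ref{cor:blapk}) guarantees $B\in H^p_{m-n}$ precisely because $m-n<1/p$, so $f\in H^p_m$, yet $\mathcal Z(W)=\{a_j\}$ fails~\eqref{eqn:carlcond}. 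This is the construction you were missing.
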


A few preliminary results will be needed. 

\begin{lem}\label{lem:embed} Given $p>0$ and $k\in\N$ with $kp\ge1$, one has $H^p_k\subset H^1_1$. 
\end{lem}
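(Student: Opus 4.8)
The plan is to reduce everything to the classical Hardy--Littlewood embedding theorems recalled in \eqref{eqn:diskalg} and \eqref{eqn:hardlit}, applied iteratively. Write $k$ as determined by the hypothesis $kp\ge1$; we want to peel off derivatives one at a time, improving the exponent at each step, until we land in $H^1_1$.

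First I would observe that the statement is trivial if $p\ge 1$: then already $H^p_1\subset H^1_1$ (since $H^p\subset H^1$ when $p\ge 1$, so $f^{(k)}\in H^p$ forces $f^{(k-1)}\in H^p_1\subset H^p\subset\dots$, and in particular $f'\in H^p\subset H^1$). So assume $0<p<1$. The key recursion is: if $f^{(j)}\in H^q$ with $0<q<1$, then by \eqref{eqn:hardlit} (applied to $f^{(j-1)}\in H^q_1$) we get $f^{(j-1)}\in H^{q/(1-q)}$, and the new exponent $q_1:=q/(1-q)$ is strictly larger than $q$. Starting from $q_0:=p$ and iterating, one produces a strictly increasing sequence $q_0<q_1<q_2<\dots$, where $q_{i+1}=q_i/(1-q_i)$ as long as $q_i<1$. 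The arithmetic fact to check is that this sequence reaches or exceeds $1$ within $k$ steps precisely when $q_0=p\ge 1/k$; indeed, setting $r_i:=1/q_i$ one has the clean linear recursion $r_{i+1}=r_i-1$, so $r_i=r_0-i=1/p-i$, and $q_k=1/(1/p-k)\ge 1$ (or the process terminates earlier with some $q_i\ge1$) exactly when $1/p-k\le 1$, i.e. $kp\ge 1$. Thus after at most $k$ differentiation-and-embedding steps we arrive at $f^{(k-j)}\in H^{q_j}$ with $q_j\ge 1$ for some $j\le k$; once an exponent is $\ge 1$, the remaining derivatives stay in $H^1$ (or better) by monotonicity of Hardy spaces and the trivial inclusion $H^s_1\subset H^s$ for $s\ge1$ coming from $\p$, so in particular $f'\in H^1$, which is the conclusion $f\in H^1_1$.

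I would carry this out as follows: (1) dispose of the case $p\ge1$ as above; (2) for $0<p<1$, introduce $r_i:=1/q_i$ and verify $r_{i+1}=r_i-1$, giving $r_i=1/p-i$; (3) let $j$ be the smallest index with $q_j\ge 1$ — equivalently $1/p-j\le 1$ — and note $j\le k$ because $1/p-k\le 1$ by hypothesis; (4) for $i<j$ apply \eqref{eqn:hardlit} to $f^{(k-i-1)}\in H^{q_i}_1$ to pass from $f^{(k-i)}\in H^{q_i}$ to $f^{(k-i-1)}\in H^{q_{i+1}}$; (5) once $f^{(k-j)}\in H^{q_j}$ with $q_j\ge 1$, conclude $f^{(k-j-1)},\dots,f'\in H^1$ by noting that each lower derivative lies in $H^{q_j}_1\subset H^{q_j}\subset H^1$ (using $q_j\ge1$), and read off $f\in H^1_1$. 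One should be slightly careful at the boundary case $q_j=1$ exactly, but there \eqref{eqn:diskalg} gives $f^{(k-j-1)}\in H^1_1\subset H^\infty$, which is even stronger, so no difficulty arises.

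The only mildly delicate point — hardly an obstacle — is bookkeeping: making sure the indices of the derivatives and the exponents stay matched through the iteration, and handling the edge cases where some $q_i$ equals $1$ exactly or where $j=k$ (so that no ``slack'' derivatives remain and the conclusion is immediate). The substance is entirely the elementary observation that the map $q\mapsto q/(1-q)$ is conjugate, via $q\mapsto 1/q$, to the subtraction-by-one map, which makes the ``$k$ steps suffice iff $kp\ge1$'' transparent. No new function-theoretic input beyond \eqref{eqn:diskalg} and \eqref{eqn:hardlit} is required.
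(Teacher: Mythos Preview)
Your argument is correct and is exactly the approach the paper sketches: the paper's own proof is the two-line remark that the case $p\ge1$ is trivial by \eqref{eqn:diskalg} while for $0<p<1$ one establishes the inclusion by repeated application of \eqref{eqn:hardlit}, and you have simply supplied the details of that iteration (with the nice observation that the map $q\mapsto q/(1-q)$ becomes $r\mapsto r-1$ under $r=1/q$). One minor point: since $kp\ge1$ actually gives $1/p-(k-1)\le1$, your index $j$ satisfies $j\le k-1$, so the edge case $j=k$ you flag never occurs.
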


Indeed, the case $p\ge1$ is trivial in view of \eqref{eqn:diskalg}, while for $0<p<1$ the required fact 
can be established by repeated application of \eqref{eqn:hardlit}. 

\begin{lem}\label{lem:honeone} The space $H^1_1$ is an algebra, and every zero set for $H^1_1$ is a (BC)-set. 
\end{lem}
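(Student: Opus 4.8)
\textbf{Proof plan for Lemma \ref{lem:honeone}.}

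The plan is to establish the two assertions separately. For the algebra property, the key observation is that if $f,g\in H^1_1$, then $(fg)'=f'g+fg'$, so I need $f'g$ and $fg'$ to lie in $H^1$. By \eqref{eqn:diskalg} we have $f,g\in H^\infty$, and by hypothesis $f',g'\in H^1$; hence each product $f'g$ and $fg'$ is a product of an $H^1$ function with an $H^\infty$ function, and therefore lies in $H^1$. Thus $(fg)'\in H^1$, i.e.\ $fg\in H^1_1$. (One should also note that $H^1_1$ is obviously a vector space and contains the constants, so it is genuinely an algebra.)

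For the statement about zero sets, let $E$ be the zero set of a non-null $f\in H^1_1$. I must verify that $E$ satisfies both the Blaschke condition \eqref{eqn:blacond} and the Carleson condition \eqref{eqn:carlcond}. The Blaschke condition is immediate: $H^1_1\subset H^\infty\subset H^1$ by \eqref{eqn:diskalg}, and zero sets of $H^1$ functions (indeed of any nonzero Nevanlinna-class function) satisfy the Blaschke condition. For the Carleson condition, the point is that $f\in H^1_1$ is continuous up to $\T$ with $f'\in H^1$, so in fact $f$ extends to a function in the disk algebra whose boundary values have an absolutely convergent "derivative integral"; more precisely, $f$ restricted to $\T$ is absolutely continuous with $L^1$ derivative, hence $f|_\T$ has modulus of continuity controlled so that $\log\dist(\ze,E)$ is integrable. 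The cleanest route is to invoke the Carleson--Taylor--Williams characterization recalled in Section 3: since the boundary zero set of any function in $\Al$ (for instance $A^{1-1/p}$ when $p>1$) is a Carleson set, and more directly since membership in $H^1_1$ forces $f|_\T\in \mathcal L^1_1$-type smoothness, the closed set $\clos E$ is a zero set for a smooth class and hence a (BC)-set; therefore $E$ itself is a (BC)-set.

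The main obstacle is the Carleson condition for $E\cap\T$. The algebra property and the Blaschke condition are both routine consequences of \eqref{eqn:diskalg}. For the Carleson part one needs a genuine smoothness input: the fact that $f\in H^1_1$ implies $f$ is absolutely continuous on $\T$ with integrable derivative, so that near a boundary zero $\ze_0\in E$ one has $|f(\ze)|\le \int$ of $|f'|$ over the arc from $\ze_0$ to $\ze$, giving a quantitative decay that, combined with $f\not\equiv 0$, yields $\int_\T\log\dist(\ze,E)\,|d\ze|>-\infty$. I would phrase this by reducing to the Carleson--Taylor--Williams theorem: it suffices to exhibit $\clos E$ as a zero set of some $\Al$ function with $\al>0$, and one obtains such a function by noting that $H^p_1\subset A^{1-1/p}$ for $p>1$ handles $p>1$, while for the borderline and for general $H^1_1$ one uses that $f\in H^1_1$ already has the requisite boundary regularity. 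Once that is in place, both required properties of $E$ follow, completing the proof.
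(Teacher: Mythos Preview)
Your treatment of the algebra property is correct and is exactly what the paper does: the embedding \eqref{eqn:diskalg} gives $f,g\in H^\infty$, so $(fg)'=f'g+fg'\in H^1$.

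The second assertion, however, is where your proposal has a genuine gap. The paper does not prove this part at all; it explicitly flags it as ``much deeper'' and cites \cite{VS2} (Vinogradov--Shirokov), with \cite{CN} as an auxiliary reference for boundary zeros. Your sketch does not fill that gap. The difficulty is precisely that $H^1_1$ is \emph{not} contained in any $A^\al$ with $\al>0$: the Hardy--Littlewood embedding $H^p_1\subset A^{1-1/p}$ breaks down at $p=1$, where one only gets continuity (disk-algebra membership), not H\"older continuity. So the ``cleanest route'' you propose---reducing to the Carleson--Taylor--Williams characterization by exhibiting $\text{clos}\,E$ as a zero set of some $A^\al$ function---is unavailable without further work. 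Your alternative idea, that absolute continuity of $f|_\T$ with $L^1$ derivative gives $|f(\ze)|\le\int|f'|$ over an arc and hence integrability of $\log\dist(\ze,E)$, is not a proof either: an $L^1$ modulus of continuity does not by itself force the Carleson condition, and the passage from ``$|f(\ze)|$ small near $E$'' to ``$\log\dist(\ze,E)$ integrable'' requires a quantitative argument that you have not supplied (and which is, in fact, the substance of \cite{VS2}).

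In short: keep your algebra argument, but for the (BC)-set assertion you should either reproduce the Vinogradov--Shirokov proof in earnest or, as the paper does, cite \cite{VS2}.
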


Here, the first statement is an easy consequence of \eqref{eqn:diskalg}; the second, which is much deeper, 
was proved in \cite{VS2}. See also \cite{CN} in connection with boundary zero sets. 

\begin{lem}\label{lem:nonbla} For each $l\in\N$ and $p>0$, the space 
$$\left(H^p\right)^{(l)}:=\left\{f^{(l)}:\,f\in H^p\right\}$$
contains a function whose zero set fails to satisfy the Blaschke condition. 
\end{lem}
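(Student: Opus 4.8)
For each $l\in\N$ and $p>0$, the space $\left(H^p\right)^{(l)}$ contains a function whose zero set fails to satisfy the Blaschke condition.

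\textbf{The plan.} Since $H^\infty\subset H^p$ for every $p>0$, it suffices to produce, for each fixed $l\in\N$, a \emph{single} function $g\in H^\infty$ whose $l$-th derivative $h:=g^{(l)}$ has a zero set violating the Blaschke condition; then $h=g^{(l)}\in(H^p)^{(l)}$ for all $p$ at once. I would look for $g$ of the form
$$g(z)=\sum_{k\ge 1}\eps_k\,k^{-l-1/2}\,z^k,\qquad \eps_k\in\{-1,+1\}\ \text{to be chosen.}$$
Because $l\in\N$ forces $l+\tfrac12>1$, the Taylor coefficients are absolutely summable, so $g$ extends continuously to $\ov\D$ and $g\in H^\infty$ \emph{whatever} the signs are. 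On the other hand $h(z)=g^{(l)}(z)=\sum_{m\ge0}c_mz^m$ with $|c_m|=\dfrac{(m+l)!}{m!}\,(m+l)^{-l-1/2}\asymp m^{-1/2}$, so that $h(0)\ne0$ and $\sum_{m\ge0}|c_m|^2r^{2m}\asymp\log\dfrac1{1-r}$ as $r\to1^-$. Thus $h\notin H^2$ (indeed $h\notin$ Nevanlinna class for a generic choice of signs), which is what leaves room for a non‑Blaschke zero set.

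\textbf{From $\log$-means to non-Blaschke zeros.} Let $z_1,z_2,\dots$ be the zeros of $h$ and $n(t)$ their number in $\{|z|<t\}$. Jensen's formula gives, for $0<r<1$,
$$\int_0^r\f{n(t)}{t}\,dt=\f1{2\pi}\int_0^{2\pi}\log|h(re^{i\th})|\,d\th-\log|h(0)|,$$
and the left-hand side is nondecreasing in $r$. The crux is to show that, for a good choice of signs, this quantity tends to $+\infty$ as $r\to1^-$. Granting that, and using $\tfrac1t\le2$ on $[\tfrac12,1)$ together with $n(t)=0$ near $0$, one gets $\int_0^r\f{n(t)}{t}\,dt\le C+2\int_0^1 n(t)\,dt$, hence $\int_0^1 n(t)\,dt=\infty$, i.e.\ $\sum_k(1-|z_k|)=\infty$: the Blaschke condition fails. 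So everything reduces to a lower bound on the circular $\log$-means of $h$.

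\textbf{The hard step: blow-up of the $\log$-means.} Here I would take the $\eps_k$ to be independent random signs. For fixed $r,\th$, $h(re^{i\th})=\sum_m\eta_m x_m$ is a Rademacher sum, with $\eta_m:=\eps_{m+l}$, $x_m:=|c_m|\,r^m e^{im\th}$, $\|x\|_2^2=\si(r)^2\asymp\log\f1{1-r}$, and $\max_m|x_m|\le M(l)$. Since $\max_m|x_m|/\si(r)\to0$, the central limit theorem yields $\si(r)^{-1}\sum_m\eta_m x_m\Rightarrow$ a standard complex Gaussian $\ze$, whence $E_\eta\big[\log|\si(r)^{-1}\sum_m\eta_m x_m|\big]\to E\log|\ze|=-\ga/2$; an elementary small-ball (anti-concentration) estimate for Rademacher sums — available because the $|c_m|$ decay slowly and infinitely many are nonzero — keeps these expectations bounded uniformly in $r,\th$. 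Therefore
$$E_\eta\!\left[\f1{2\pi}\int_0^{2\pi}\log|h(re^{i\th})|\,d\th\right]=\log\si(r)+O(1)\asymp\tfrac12\log\log\tfrac1{1-r}\ \longrightarrow\ \infty .$$
A variance bound on this integral (available from standard estimates on $\log$-integrals and zero counts of random analytic functions) together with Borel--Cantelli along a sequence $r_j\uparrow1$ then shows that almost surely $\f1{2\pi}\int_0^{2\pi}\log|h(r_je^{i\th})|\,d\th\to\infty$; by the monotonicity noted above the full limit is $+\infty$ almost surely. In particular \emph{some} choice of signs works, which completes the argument.

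\textbf{Where the difficulty lies.} Everything outside the third paragraph is routine (Cauchy estimates implicitly, Jensen, and bookkeeping). The essential input is the blow-up of the $\log$-means of $g^{(l)}$ for a suitable choice of coefficients, and this rests on (anti-)concentration of random series rather than on complex analysis; one could instead use an explicit "flat" sign sequence (e.g.\ Rudin--Shapiro) or Gaussian coefficients, which make the concentration step cleanest. Some such oscillation of the coefficients really is needed: for a Blaschke product $B$ the zeros of each $B^{(l)}$ again form a Blaschke sequence (its critical points approach $\T$ only where its zeros cluster), and quotients of Hardy functions by powers of $1-z$ likewise yield only Blaschke zero sets, so no "closed-form" $g$ of the obvious shapes can do the job.
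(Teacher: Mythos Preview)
Your construction is sound in outline and, once the probabilistic details are filled in, yields a correct proof; but it is a very different route from the paper's. The paper's argument is two lines: the Bloch space $\mathcal B=(A^1)'$ is known (Anderson--Clunie--Pommerenke) to contain functions with non-Blaschke zero sets; since $A^1\subset H^\infty\subset H^p$, any such Bloch function $h$ can be written as $h=F^{(l)}$ with $F$ an $(l-1)$-fold antiderivative of an $H^\infty$ function, hence $F\in H^\infty\subset H^p$. What you are doing is, in effect, \emph{reproving} a version of that ACP fact by hand: your $h=g^{(l)}$ is a random Taylor series with $|c_m|\asymp m^{-1/2}$, which is precisely the prototype of a Bloch function with unbounded Nevanlinna counting function. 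The trade-off is clear: the paper's route is vastly shorter but relies on an external reference, while yours is self-contained modulo standard probabilistic lemmas.

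Two comments on the execution. First, the genuinely delicate point in your argument is not the CLT step but the passage from ``$E_\eta[\text{log-mean}]\to\infty$'' to ``almost surely the log-mean $\to\infty$''. Your appeal to a variance bound plus Borel--Cantelli is plausible but vague; the cleanest way to make this rigorous is to invoke the known asymptotics $N(r,h)\sim T(r,h)\sim\tfrac12\log\sum|c_m|^2r^{2m}$ for Rademacher (or Gaussian) Taylor series, due to Offord and presented in Kahane's \emph{Some Random Series of Functions}. If you want the proof to be truly self-contained, that step needs more than a sentence. Second, your closing aside is not quite right: it is not a general theorem that the critical points of an infinite Blaschke product form a Blaschke sequence, so ``no closed-form $g$ of the obvious shapes can do the job'' overstates the case. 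This does not affect your main argument, but I would drop the claim.
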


To verify this, recall that even the Bloch space $\mathcal B=(A^1)'$ is known to contain functions with 
non-Blaschke zero sets (see \cite{ACP}). Since $A^1\subset H^\infty$, a similar conclusion holds for 
$(H^\infty)'$ and hence, {\it a fortiori}, for $\left(H^p\right)^{(l)}$. 

\par Finally, the next result is a restricted version of \cite[Theorem 1]{Li}. 

\begin{lem}\label{lem:linden} Let $k\in\N$ and $0<\al<1/(k+1)$. If $\{a_n\}\subset\D$ is a sequence such that 
\begin{equation}\label{eqn:blaal}
\sum_n(1-|a_n|)^\al<\infty
\end{equation}
and $B$ is the Blaschke product with zeros $\{a_n\}$, then $B\in H^p_k$ whenever $0<p\le(1-\al)/k$. 
\end{lem}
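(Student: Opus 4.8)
\emph{Proposal.} Although this lemma is quoted from the literature, here is how I would argue directly. The plan is to establish the conclusion for the single value $p=(1-\al)/k$; this is enough, since $H^p\subset H^{p'}$ whenever $0<p'\le p$, so that one value of $p$ already yields the whole range. The starting point is the elementary identity $b_a(z)=\f1{|a|}+\f{|a|^2-1}{|a|(1-\bar a z)}$, valid for a Blaschke factor $b_a(z)=\f{\bar a}{|a|}\cdot\f{a-z}{1-\bar a z}$ with $a\in\D\setminus\{0\}$ (the factor $z$, should $0$ occur among the $a_n$, is harmless). It gives at once $|b_a(z)|\le1$ on $\D$ and, for $m\ge1$, the bound $|b_a^{(m)}(z)|\le 2\,m!\,(1-|a|)\,|1-\bar a z|^{-m-1}$, because $(1-|a|^2)|a|^{m-1}\le 2(1-|a|)$.

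Next I would differentiate the finite Blaschke product $B_N=\prod_{n\le N}b_{a_n}$ by the higher-order Leibniz rule, bound the factors carrying a zeroth-order derivative by $1$, and group the remaining factors according to their orders of differentiation. This produces, uniformly in $N$, a pointwise estimate of the shape
$$|B_N^{(k)}(z)|\le C_k\sum_{j=1}^k\ \sum_{\substack{m_1+\dots+m_j=k\\ m_i\ge1}}\ \prod_{i=1}^j S_{m_i}(z),\qquad\text{where}\quad S_m(z):=\sum_n\f{1-|a_n|}{|1-\bar a_n z|^{m+1}};$$
letting $N\to\infty$ (the $B_N$ tend to $B$ locally uniformly, hence so do all derivatives) gives the same bound for $|B^{(k)}(z)|$. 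The outer double sum is finite — it runs over the compositions of $k$ — so it remains to bound, uniformly in $r\in(0,1)$, the integral $\int_\T\big(\prod_{i=1}^j S_{m_i}(r\ze)\big)^p\,|d\ze|$ for each composition.

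Here the assumption $p<1$ is exploited twice. First, since $p\le1$, one has $\big(\prod_i S_{m_i}\big)^p=\prod_i S_{m_i}^p$, and Hölder's inequality with the exponents $q_i=k/m_i$ (for which $\sum_i q_i^{-1}=\sum_i m_i/k=1$) bounds the integral above by $\prod_i\big(\int_\T S_{m_i}(r\ze)^{(1-\al)/m_i}\,|d\ze|\big)^{m_i/k}$. Second, in each factor one applies the subadditivity inequality $\big(\sum_n x_n\big)^t\le\sum_n x_n^t$ with $t=(1-\al)/m_i\le1-\al<1$, followed by the standard kernel estimate $\int_\T|1-\bar w\ze|^{-s}\,|d\ze|\le C_s(1-|w|)^{1-s}$, valid for $|w|<1$ and $s>1$. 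The hypothesis $\al<1/(k+1)$ is exactly what forces the relevant exponent $s=(m_i+1)(1-\al)/m_i$ to exceed $1$ for every $m_i\le k$; then, using $1-r|a_n|\ge1-|a_n|$, a short computation shows each factor is dominated by a power of $\sum_n(1-|a_n|)^\al$, which is finite by hypothesis. Hence $\int_\T|B^{(k)}(r\ze)|^p\,|d\ze|$ is bounded independently of $r$, and since $|B^{(k)}|^p$ is subharmonic this gives $B^{(k)}\in H^p$, i.e. $B\in H^p_k$.

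The bulk of the work — and the one place where care is essential — is the bookkeeping in the last paragraph. One must perform Hölder's inequality (with the weights $q_i=k/m_i$ dictated by the differentiation orders) \emph{before} pushing the exponent $p$ through the sum over $n$ inside each $S_{m_i}$: doing it in the opposite order replaces the summability hypothesis $\sum_n(1-|a_n|)^\al<\infty$ by the strictly stronger requirement $\sum_n(1-|a_n|)^{m_i\al/k}<\infty$, and the argument collapses. Producing the clean pointwise bound of the second paragraph, with its one-parameter quantities $S_m$, also requires some combinatorial attention, but is otherwise routine.
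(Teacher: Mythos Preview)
The paper does not prove this lemma at all: it simply records it as ``a restricted version of \cite[Theorem 1]{Li}'' and moves on. Your proposal, by contrast, supplies a full direct argument, and the argument is correct. The exponent bookkeeping is right: with $p=(1-\al)/k$ and a composition $m_1+\dots+m_j=k$, H\"older with weights $q_i=k/m_i$ turns $pq_i$ into $(1-\al)/m_i<1$ (so subadditivity of $t\mapsto t^{(1-\al)/m_i}$ applies), and the kernel exponent $(m_i+1)(1-\al)/m_i$ exceeds $1$ precisely because $\al<1/(k+1)\le 1/(m_i+1)$; the resulting power of $(1-|a_n|)$ collapses to $\al$ as you indicate. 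One cosmetic slip: the identity $\big(\prod_i S_{m_i}\big)^p=\prod_i S_{m_i}^p$ holds for every real $p$, so the qualifier ``since $p\le1$'' attached to it is misplaced --- the genuine use of $p<1$ is in the subadditivity step that follows. Your approach is in fact the standard one, and is essentially how Linden proceeds in the cited paper; so while it differs from the present paper (which gives no proof), it does not differ from the source.
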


From this we deduce the following fact. 

\begin{cor}\label{cor:blapk} Let $k\in\N$ and $0<p<1/k$. If $B$ is a Blaschke product whose zero sequence $\{a_n\}$ 
satisfies $|a_n|=1-2^{-n}$ $(n=1,2,\dots)$, then $B\in H^p_k$. 
\end{cor}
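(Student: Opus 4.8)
The plan is to deduce this at once from Lemma~\ref{lem:linden} by choosing the auxiliary exponent $\al$ appropriately. Note first that, for the zero sequence under consideration, $1-|a_n|=2^{-n}$, so that
$$\sum_n(1-|a_n|)^\al=\sum_{n=1}^\infty2^{-n\al}<\infty$$
for \emph{every} $\al>0$. Hence the Blaschke-type condition \eqref{eqn:blaal} causes no trouble here, and the only genuine constraints on $\al$ are the remaining hypotheses of Lemma~\ref{lem:linden}, namely $0<\al<1/(k+1)$ and $0<p\le(1-\al)/k$.

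The latter inequality is equivalent to $\al\le1-kp$. Since we assume $0<p<1/k$, we have $1-kp>0$, so that
$$\al:=\tfrac12\min\left\{1-kp,\ \tfrac1{k+1}\right\}$$
is a well-defined positive number with $\al<1/(k+1)$ and $\al<1-kp$. For this value of $\al$ all the hypotheses of Lemma~\ref{lem:linden} are fulfilled: $\{a_n\}$ satisfies \eqref{eqn:blaal} by the geometric-series computation above, $\al$ lies in $(0,1/(k+1))$, and $p\le(1-\al)/k$ since $1-\al>kp$. The lemma then gives $B\in H^p_k$, as claimed.

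The argument has essentially no hard step; it merely amounts to verifying that the three conditions placed on $\al$ by Lemma~\ref{lem:linden} are mutually compatible. The point where the hypothesis is actually used is the strict inequality $p<1/k$ (as opposed to $p\le1/k$): it provides the positive slack $1-kp$ into which $\al$ must be squeezed, and without it no admissible $\al$ would exist.
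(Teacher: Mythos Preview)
Your proof is correct and follows essentially the same approach as the paper: both apply Lemma~\ref{lem:linden} directly after choosing a sufficiently small $\al$, and the geometric-series verification of \eqref{eqn:blaal} is identical. The only cosmetic difference is the specific value of $\al$ selected (the paper takes $\al=\min\bigl(1-pk,\,1/(2(k+1))\bigr)$, while you take half that minimum with $1/(k+1)$ in place of $1/(2(k+1))$), but either choice satisfies the required constraints.
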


Indeed, it suffices to apply Lemma \ref{lem:linden} with a suitably small $\al$, for instance, with 
$$\al=\min\left(1-pk,\,\f1{2(k+1)}\right).$$

\medskip\noindent{\it Proof of Theorem \ref{thm:hpm}.} We begin by proving the sufficiency in (i) and (ii). 
Consider the Wronskian $W:=W(f_0,\dots,f_n)$ of some linearly independent functions $f_0,\dots,f_n\in H^p_m$. 
\par Now, if $m\ge n$, then $f_0^{(n)},\dots,f_n^{(n)}$ are in $H^p_k$, where $k=m-n$, and hence in $H^p$. It follows 
that the lower order derivatives $f_j^{(l)}$, with $0\le j\le n$ and $0\le l\le n-1$, are also (at least) in $H^p$. 
This in turn implies that $W$ lies in a certain $H^r$ space (recall that $\bigcup_{q>0}H^q$ is an algebra or simply 
put $r=p/(n+1)$ and use H\"older's inequality), so the zero set $\mathcal Z(W)$ satisfies the Blaschke condition. 
\par Similarly, if $m\ge n+p^{-1}$, then the $n$th derivatives $f_j^{(n)}$ of all the $f_j$'s are in $H^p_k$ with 
$k=m-n\ge p^{-1}$, and Lemma \ref{lem:embed} ensures that $f_j^{(n)}\in H^1_1$. From this we deduce that 
the lower order derivatives $f_j^{(l)}$ are also (at least) in $H^1_1$, which eventually yields $W\in H^1_1$ in 
view of Lemma \ref{lem:honeone}. The same lemma tells us, then, that $\mathcal Z(W)$ is a (BC)-set. 
\par Now let us turn to the necessity in (i) and (ii). Suppose that $m<n$ and put $\ell=n-m$. Further, invoke 
Lemma \ref{lem:nonbla} to find a function $g\in(H^p)^{(\ell)}$ whose zero set $\mathcal Z(g)$ is non-Blaschke, 
in the sense that 
$$\sum_{z\in\mathcal Z(g)}(1-|z|)=\infty,$$ 
and let $f\in\mathcal H(\D)$ be such that $f^{(n)}=g$. Since $g$ is the $\ell$th derivative of $f^{(m)}$, 
it follows that $f^{(m)}\in H^p$, or equivalently, $f\in H^p_m$. The elementary formula 
\begin{equation}\label{eqn:pussy}
W\left(1,\f z{1!},\,\dots,\,\f{z^{n-1}}{(n-1)!},f\right)=f^{(n)}
\end{equation}
shows that this last Wronskian vanishes precisely on $\mathcal Z(g)$, whence we conclude that $\mathcal Z_n(H^p_m)$ 
contains non-Blaschke sets. 
\par Finally, assume that $n\le m<n+p^{-1}$ and put $k=m-n$. Let $\{a_j\}$ be a sequence in $\D$ 
satisfying $|a_j|=1-2^{-j}$ $(j=1,2,\dots)$ and having the whole circle $\T$ as its limit set. Further, 
write $B$ for the Blaschke product with zeros $\{a_j\}$, and let $f\in\mathcal H(\D)$ be such that $f^{(n)}=B$. 
We have then 
$$f^{(m)}=f^{(n+k)}=B^{(k)}\in H^p$$ 
(by virtue of Corollary \ref{cor:blapk}), whence $f\in H^p_m$. Now, if $W$ stands for 
the Wronskian on the left-hand side of \eqref{eqn:pussy}, with our current $f$ plugged in, 
then \eqref{eqn:pussy} reduces to saying that $W=B$. Consequently, the zeros of $W$ in $\D$ are 
precisely the $a_j$'s, and these obviously fail to form a (BC)-set, since $\text{\rm clos}\,\{a_j\}\supset\T$. 
The proof is complete. 
\quad\qed

\par Before stating our last theorem, we have to introduce a bit of notation. Namely, given a space 
$X\subset\mathcal H(\D)$ and an integer $m\ge0$, we write $X_m$ for the set of those functions $f\in\mathcal H(\D)$ 
which satisfy $f^{(k)}\in X$ with $k=0,\dots,m$. Of course, if $X$ is the Hardy space $H^p$, then $X_m$ becomes 
$H^p_m$. The role of $X$ will alternatively be played by $\bmoa:=H^1\cap\bmo$ (where $\bmo$ is the space of 
functions of {\it bounded mean oscillation} on $\T$), as well as by the {\it Nevanlinna class} $\mathcal N$ and 
the {\it Smirnov class} $\mathcal N^+$. 
\par In connection with $\bmo$, the reader is referred to \cite[Chapter VI]{G}. The class $\mathcal N$ 
(resp., $\mathcal N^+$) is formed by the quotients $u/v$ with $u,v\in H^\infty$, where $v$ is 
zero-free (resp., outer) in $\D$; see \cite[Chapter II]{G} for equivalent definitions and characterizations. 
To keep on the safe side, we also recall the notation \eqref{eqn:xprime}, since this will be used again for some 
of our current $X$'s. 

\begin{thm}\label{thm:zazaza} Let $X$ be any of the following spaces: $\bmoa$, $H^p$ (with $0<p<\infty$), 
$\mathcal N^+$ or $\mathcal N$. Then, for every integer $m\ge0$, one has 
$$\mathcal Z_{m+1}(X_m)=\mathcal Z(\bmoa').$$ 
\end{thm}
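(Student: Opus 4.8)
The plan is to prove the two inclusions $\mathcal Z_{m+1}(X_m)\subset\mathcal Z(\bmoa')$ and $\mathcal Z(\bmoa')\subset\mathcal Z_{m+1}(X_m)$ separately, relying as in Theorems \ref{thm:zezeze} and \ref{thm:hpm} on the identification of $\mathcal Z(f_0,\dots,f_{m+1})$ with the zero set of the Wronskian $W(f_0,\dots,f_{m+1})$. The unifying point is that $\bmoa'$ should turn out to be the ``right'' space for the derivatives: for each of the listed $X$, a function $f$ whose first $m$ derivatives lie in $X$ has $(m+1)$-st and higher derivatives sitting in a class whose zero/Wronskian behaviour is governed by $\bmoa'$.

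For the inclusion $\mathcal Z_{m+1}(X_m)\subset\mathcal Z(\bmoa')$, start with linearly independent $f_0,\dots,f_{m+1}\in X_m$ and set $W:=W(f_0,\dots,f_{m+1})$. Expanding the determinant \eqref{eqn:wro} and counting derivatives, the entries in rows $0,\dots,m$ are derivatives of order $\le m$ of the $f_j$, hence lie in $X$; the entries in the last row are the $(m+1)$-st derivatives $f_j^{(m+1)}$, which lie in $X'$ (the derivative space of $X$). So $W$ is a sum of products, each a product of $m+1$ functions from $X$ times one function from $X'$; I then want to check that any such product lies in $\bmoa'$, i.e.\ is the derivative of a $\bmoa$ function. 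For $X=H^p$ this needs that $H^\infty\cdot(H^p)'\subset\bmoa'$ type statements; more robustly, for all four choices of $X$ one has $X\subset\mathcal N^+$ and $X'\subset(\mathcal N^+)'$, and the clean way is to show each product is the derivative of an $\mathcal N^+$ (resp.\ $\mathcal N$) function and then invoke that $\mathcal Z(\mathcal N^+)=\mathcal Z(\mathcal N)=\mathcal Z(\bmoa')$ --- but that last equality of zero-set classes is exactly the sort of fact the theorem is asserting, so instead I would argue directly that $W$, being a nontrivial function in (say) $\mathcal N$ which is the derivative of something, has zero set realizable as $g^{-1}(0)$ for some $g\in\bmoa'$. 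The cleanest route: all of $\bmoa$, $H^p$, $\mathcal N^+$, $\mathcal N$ have the \emph{same} class of zero sets (characterized by the Blaschke condition for the first three restricted to $\D$; the boundary being irrelevant since we only look at $\mathcal Z(g)\subset\D$), and likewise their derivative spaces share a common zero-set class, which is $\mathcal Z(\bmoa')$; this should be quotable or quickly derivable, and it immediately gives $W^{-1}(0)\in\mathcal Z(\bmoa')$.

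For the reverse inclusion $\mathcal Z(\bmoa')\subset\mathcal Z_{m+1}(X_m)$, take $E\in\mathcal Z(\bmoa')$, so $E=g^{-1}(0)$ with $g=\psi'$ for some $\psi\in\bmoa$. I want to realize $E$ as $\mathcal Z(f_0,\dots,f_{m+1})$ for suitable $f_j\in X_m$. The model is formula \eqref{eqn:pussy}: with $f_j(z)=z^j/j!$ for $0\le j\le m$ and a twelfth function $f$ chosen so that $f^{(m+1)}=g$, one gets $W(1,z/1!,\dots,z^m/m!,f)=f^{(m+1)}=g$, which vanishes exactly on $E$. The monomials lie in $X_m$ trivially. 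The real content is to produce $f\in X_m$ with $f^{(m+1)}=\psi'$, equivalently $f^{(m)}=\psi+(\text{polynomial of degree}<m)$; adjusting by a polynomial, it suffices to antidifferentiate $\psi\in\bmoa$ a total of $m$ times and check the result lies in $X_m$, i.e.\ that its $k$-th derivative lies in $X$ for $k\le m$. For $k\le m-1$ these derivatives are iterated primitives of $\psi$, which only improve smoothness, so they land in $X$ (indeed in $H^\infty$, using that $\bmoa$ primitives are as smooth as one likes); for $k=m$ the derivative is $\psi\in\bmoa$ itself, and one checks $\bmoa\subset X$ for each of the four listed spaces --- true since $\bmoa\subset H^p$ for all $p<\infty$ and $H^\infty\cap\bmoa=\bmoa$ etc., and certainly $\bmoa\subset\mathcal N^+\subset\mathcal N$.

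The main obstacle I anticipate is the identification of the derivative-space zero-set classes: showing that $X'$ (for $X\in\{\bmoa,H^p,\mathcal N^+,\mathcal N\}$) has the \emph{same} class of $\D$-zero sets as $\bmoa'$, so that $W$ landing in one of these derivative spaces is enough. The forward direction is easy if one is willing to only claim $W$ is a derivative of an $\mathcal N$-function (since $X\subset\mathcal N$ and products stay in $\mathcal N$, and a product of derivatives-of-$\mathcal N$ with $\mathcal N$-functions should be a derivative of an $\mathcal N$ function after integrating), but then one must know $\mathcal Z(\mathcal N')=\mathcal Z(\bmoa')$; the inclusion $\mathcal Z(\bmoa')\subset\mathcal Z(\mathcal N')$ is trivial, and the reverse amounts to: every zero set of a derivative of a Nevanlinna function is already a zero set of a derivative of a $\bmoa$ function. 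This is plausible because the relevant obstruction is purely a (non-)Blaschke growth condition on the zeros inside $\D$, and primitives of $\bmoa$ functions are already ``large enough'' to have arbitrary Blaschke \emph{and} non-Blaschke zero sets; but pinning it down cleanly --- perhaps by citing \cite{ACP} for the Bloch-space side and noting $\bmoa'\supset$ something containing wild zero sets --- is where the argument has to be careful. Once that equality of zero-set classes is in hand, both inclusions follow from the two Wronskian formulas exactly as above.
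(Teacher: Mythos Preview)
Your reverse inclusion $\mathcal Z(\bmoa')\subset\mathcal Z_{m+1}(X_m)$ is fine and matches the paper: one reduces to the smallest space $X=\bmoa$, takes $E=\mathcal Z(\psi')$ with $\psi\in\bmoa$, chooses $f$ with $f^{(m)}=\psi$, and uses formula \eqref{eqn:pussy} with $n=m+1$; since $\bmoa\subset X$ for each listed $X$, the monomials and $f$ all lie in $X_m$.

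The forward inclusion, however, has a real gap that you yourself flag but do not close. You correctly reduce to $X=\mathcal N$ and expand $\mathcal W$ along its last row as $\sum_j f_j^{(m+1)}\Delta_j$ with $\Delta_j\in\mathcal N$ and $f_j^{(m+1)}\in\mathcal N'$. But from here you need a concrete mechanism, not a hope that ``products of $\mathcal N$-functions with $\mathcal N'$-functions should be derivatives after integrating.'' The paper supplies exactly this mechanism via Lemma \ref{lem:nevbmo}: using the Carleson-measure identity $H^\infty\cdot\bmoa'=\bmoa'$, one shows that $\mathcal N\cdot\bmoa'$ is a \emph{vector space} (closure under addition is the nontrivial point) and that $\mathcal N'\subset\mathcal N\cdot\bmoa'$ (via the quotient-rule formula \eqref{eqn:pizdenka}). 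Hence each summand $f_j^{(m+1)}\Delta_j$ lies in $\mathcal N\cdot\bmoa'$, and so does their sum $\mathcal W$. One then writes $\mathcal W=\Phi\Psi'$ with $\Phi\in\mathcal N$ and $\Psi\in\bmoa$.

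There is a second step you also miss: to conclude $\mathcal Z(\mathcal W)=\mathcal Z(\Psi')$, the factor $\Phi$ must be \emph{zero-free}. The paper achieves this by absorbing the Blaschke factor of $\Phi$ into $\Psi'$, again using $H^\infty\cdot\bmoa'=\bmoa'$. Without both ingredients --- the vector-space/inclusion lemma and the zero-free factorization --- your argument for $\mathcal Z_{m+1}(\mathcal N_m)\subset\mathcal Z(\bmoa')$ remains a sketch. Your alternative route via the bare equality $\mathcal Z(\mathcal N')=\mathcal Z(\bmoa')$ would not by itself suffice either, since $\mathcal W$ is not a priori in $\mathcal N'$ but only in $\mathcal N\cdot\mathcal N'$; the detour through $\bmoa'$ is what makes the argument go.
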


While no explicit characterization of $\mathcal Z(\bmoa')$ seems to be available, it can be shown that 
$$\mathcal Z\left(A^{-1+\eps}\right)\subset\mathcal Z(\bmoa')\subset\mathcal Z\left(A^{-1}\right)$$
for an arbitrarily small $\eps>0$. On the other hand, the zero sets for an $A^{-\be}$ space with $\be>0$ 
are \lq\lq almost described" -- even though not completely describable -- by the appropriate Korenblum-type 
density condition; see Seip's refinements in \cite{Se1, Se2} to Korenblum's original work from \cite{K}. 
\par The proof of Theorem \ref{thm:zazaza} will make use of the following result, which we prove first. 

\begin{lem}\label{lem:nevbmo} The set 
$$\mathcal N\cdot\bmoa':=\{fg':\,f\in\mathcal N,\,g\in\bmoa\}$$ 
is a vector space that contains $\mathcal N'$.  
\end{lem}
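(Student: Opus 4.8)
The plan is to reduce both assertions to one observation: although $\bmoa$ is not even an $H^\infty$-module, its derivative space $\bmoa'$ is. Precisely, I would first show
$$
\phi\in H^\infty,\ g\in\bmoa\ \Longrightarrow\ \phi g'\in\bmoa'.
$$
For this, recall the Carleson-measure description of $\bmoa$ (see \cite[Chapter VI]{G}): a function $h\in\mathcal H(\D)$ lies in $\bmoa$ if and only if
$$
\sup_{I\subset\T}\frac1{|I|}\iint_{S(I)}|h'(z)|^2(1-|z|^2)\,dA(z)<\infty,
$$
the supremum being over all arcs $I$, with $S(I)$ the associated Carleson box; note that finiteness of this quantity already forces $h\in H^2$ upon taking $I=\T$, so the condition is self-contained. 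Given $\phi\in H^\infty$ and $g\in\bmoa$, let $h$ be an antiderivative of $\phi g'$. Then $|h'(z)|^2(1-|z|^2)=|\phi(z)|^2|g'(z)|^2(1-|z|^2)\le\|\phi\|_\infty^2\,|g'(z)|^2(1-|z|^2)$ pointwise on $\D$, so the Carleson-box quantity for $h$ is dominated by $\|\phi\|_\infty^2$ times that for $g$; hence $h\in\bmoa$ and $\phi g'=h'\in\bmoa'$. I also record the trivial inclusion $H^\infty\subset\bmoa$, so that $v'\in\bmoa'$ for every $v\in H^\infty$.

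Next I would prove that $\mathcal N\cdot\bmoa'$ is a vector space; only additivity is at issue. Let $f_i\in\mathcal N$ and $g_i\in\bmoa$ ($i=1,2$), and write $f_i=u_i/w_i$ with $u_i,w_i\in H^\infty$ and $w_i$ zero-free in $\D$, using the description of $\mathcal N$ recalled in this section. Then
$$
f_1g_1'+f_2g_2'=\frac{u_1w_2\,g_1'+u_2w_1\,g_2'}{w_1w_2}.
$$
The denominator $w_1w_2$ is a zero-free $H^\infty$ function, so $1/(w_1w_2)\in\mathcal N$; and by the module property, $u_1w_2\,g_1'$ and $u_2w_1\,g_2'$ both lie in $\bmoa'$, hence so does their sum, say it equals $g'$ with $g\in\bmoa$. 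Thus $f_1g_1'+f_2g_2'=\bigl(1/(w_1w_2)\bigr)g'\in\mathcal N\cdot\bmoa'$.

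Finally, for $\mathcal N'\subset\mathcal N\cdot\bmoa'$, take $h\in\mathcal N$ and write $h=u/v$ with $u,v\in H^\infty$ and $v$ zero-free. Then
$$
h'=\frac{u'v-uv'}{v^2}=\frac1{v^2}\bigl(u'v-uv'\bigr),
$$
and $1/v^2\in\mathcal N$ since $v^2$ is a zero-free $H^\infty$ function. It remains to see that some antiderivative of $u'v-uv'$ is in $\bmoa$. Writing $u'v-uv'=(uv)'-2uv'$, one has $\int(u'v-uv')=uv-2\int uv'$; here $uv\in H^\infty\subset\bmoa$, while $uv'\in H^\infty\cdot\bmoa'\subset\bmoa'$ by the module property (as $v'\in\bmoa'$), so $\int uv'\in\bmoa$. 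Hence $\int(u'v-uv')\in\bmoa$ and $h'\in\mathcal N\cdot\bmoa'$. The only genuinely nontrivial ingredient is the module property of $\bmoa'$ — the rest is formal bookkeeping with the quotient structure of $\mathcal N$ — and the reason it holds is exactly that multiplying $g'$ by a bounded function can only decrease (up to a constant) the relevant Carleson-measure norm, whereas no such stability is available for $\bmoa$ itself.
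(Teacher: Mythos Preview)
Your proof is correct and follows essentially the same route as the paper's: first the $H^\infty$-module property of $\bmoa'$ via the Carleson-measure characterization, then the common-denominator trick for additivity, then the quotient-rule formula for $\mathcal N'$. The only cosmetic difference is your detour through $u'v-uv'=(uv)'-2uv'$ in the last step; the paper (and you could too) simply notes that $u',v'\in\bmoa'$ since $H^\infty\subset\bmoa$, whence $u'v,\,uv'\in H^\infty\cdot\bmoa'\subset\bmoa'$ directly.
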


\begin{proof} We begin by recalling the (well-known) fact that the space $\bmoa'$ is invariant under 
multiplication by $H^\infty$ functions. Indeed, if $g\in\bmoa$ and $h\in H^\infty$, then the measure 
$$|g'(z)|^2(1-|z|)\,dx\,dy$$
where $z=x+iy$, is a Carleson measure (see \cite[Chapter VI]{G}), and so is 
$$|g'(z)|^2|h(z)|^2(1-|z|)\,dx\,dy,$$
whence $g'h\in\bmoa'$. Thus, 
\begin{equation}\label{eqn:idbmoa}
H^\infty\cdot\bmoa'=\bmoa'.
\end{equation}
\par Now, to prove that $\mathcal N\cdot\bmoa'$ is a vector space, we need to verify the linearity property 
$$f_1,f_2\in\mathcal N\cdot\bmoa'\implies f_1+f_2\in\mathcal N\cdot\bmoa'.$$ 
To this end, we write 
$$f_j=\f{u_j}{v_j}\cdot w'_j\qquad(j=1,2),$$
where $u_j,v_j\in H^\infty$ and $w_j\in\bmoa$, and where $v_j$ is zero-free. Note that 
$$f_1+f_2=\f1{v_1v_2}\cdot\left(u_1v_2w'_1+u_2v_1w'_2\right).$$ 
Each of the two terms in brackets, and hence their sum, is then in $\bmoa'$ by virtue of \eqref{eqn:idbmoa}, 
while the factor $1/(v_1v_2)$ is in $\mathcal N$. 
\par Finally, to check that $\mathcal N'\subset\mathcal N\cdot\bmoa'$, take any $f\in\mathcal N$ and write 
$f=u/v$ with suitable $u,v\in H^\infty$, the function $v$ being zero-free. The formula 
\begin{equation}\label{eqn:pizdenka}
f'=\f1{v^2}\cdot(u'v-uv')
\end{equation}
now provides the sought-after factorization for $f'$, because $v^{-2}\in\mathcal N$ and $u'v-uv'\in\bmoa'$, 
the latter being a consequence of \eqref{eqn:idbmoa}. 
\end{proof} 

\par We mention in passing that Lemma \ref{lem:nevbmo} admits an extension to higher order derivatives. 
In addition, similar results are available for the Smirnov class $\mathcal N^+$. This, and more, can be found 
in \cite{DAASF}. See also \cite{Co2, CV} for the corresponding factorization theorems in the $H^p$ setting. 

\medskip\noindent{\it Proof of Theorem \ref{thm:zazaza}.} Since 
$$\bmoa\subset H^p\subset\mathcal N^+\subset\mathcal N,$$ 
we clearly have 
$$\mathcal Z_n(\bmoa_m)\subset\mathcal Z_n(H^p_m)\subset
\mathcal Z_n(\mathcal N^+_m)\subset\mathcal Z_n(\mathcal N_m)$$
for all $n$, and in particular for $n=m+1$. Consequently, it suffices to show that 
\begin{equation}\label{eqn:pizda1}
\mathcal Z(\bmoa')\subset\mathcal Z_{m+1}(\bmoa_m)
\end{equation}
and 
\begin{equation}\label{eqn:pizda2}
\mathcal Z_{m+1}(\mathcal N_m)\subset\mathcal Z(\bmoa').
\end{equation}
\par To check \eqref{eqn:pizda1}, assume that $E=\mathcal Z(g')$ for some $g\in\bmoa$, and 
let $f\in\mathcal H(\D)$ be such that $f^{(m)}=g$. Applying formula \eqref{eqn:pussy} with $n=m+1$ yields 
$$W\left(1,\f z{1!},\,\dots,\,\f{z^m}{m!},f\right)=g'.$$ 
The zero set of this last Wronskian is therefore $E$, and since the functions $z^k$ and $f$ are in $\bmoa_m$, 
it follows that $E\in\mathcal Z_{m+1}(\bmoa_m)$; this proves \eqref{eqn:pizda1}. 
\par To verify \eqref{eqn:pizda2}, consider the Wronskian determinant 
$$\mathcal W:=W(f_0,\dots,f_{m+1})$$ 
built from some (any) linearly independent functions $f_0,\dots,f_{m+1}$ in $\mathcal N_m$. Expanding the 
determinant along its last row, we get 
\begin{equation}\label{eqn:lobok}
\mathcal W=\sum_{j=0}^{m+1}f_j^{(m+1)}\Delta_j,
\end{equation}
where $\Delta_j$ are the appropriate cofactors. Because the derivatives $f_j^{(k)}$ with $0\le k\le m$ are all 
in $\mathcal N$, we see that the $\Delta_j$'s are also in $\mathcal N$, whereas the functions $f_j^{(m+1)}$ 
are in $\mathcal N'$. By Lemma \ref{lem:nevbmo}, for each $j\in\{0,\dots,m+1\}$ there are functions 
$\varphi_j\in\mathcal N$ and $\psi_j\in\bmoa$ such that $f_j^{(m+1)}=\varphi_j\psi'_j$. Plugging this into 
\eqref{eqn:lobok} gives 
$$\mathcal W=\sum_{j=0}^{m+1}(\varphi_j\Delta_j)\cdot\psi'_j.$$ 
Here, each summand on the right is in $\mathcal N\cdot\bmoa'$, whence we infer (using Lemma \ref{lem:nevbmo} again) 
that $\mathcal W\in\mathcal N\cdot\bmoa'$. 
\par Consequently, we have 
\begin{equation}\label{eqn:phipsi}
\mathcal W=\Phi\Psi'
\end{equation}
with some $\Phi\in\mathcal N$ and $\Psi\in\bmoa$; moreover, we take $\Phi$ to be zero-free. (To see that this is 
always possible, assume that \eqref{eqn:phipsi} holds with $\Phi=\Phi_0B$, where $\Phi_0$ is zero-free and $B$ is 
a Blaschke product. Then invoke \eqref{eqn:idbmoa} to find a function $\Psi_0\in\bmoa$ such that $\Psi'_0=B\Psi'$, 
and use the factorization $\mathcal W=\Phi_0\Psi'_0$.) It now follows that the zero set $\mathcal Z(\mathcal W)$ 
coincides with $\mathcal Z(\Psi')$ and is, therefore, contained in $\mathcal Z(\bmoa')$. Inclusion \eqref{eqn:pizda2} 
is thus established. 
\quad\qed

\medskip

\end{document}